\newtheorem{theorem}{Theorem}[section]
\newtheorem{main-theorem}[theorem]{Main Theorem}
\newtheorem{corollary}[theorem]{Corollary}
\newtheorem{lemma}[theorem]{Lemma}
\theoremstyle{definition}
\newtheorem{definition}[theorem]{Definition}
\newtheorem{remark}[theorem]{Remark}
\newtheorem{example}[theorem]{Example}
\newtheoremstyle{proof-sketch}%
  {}{}%
  {}{}%
  {\itshape}{.}%
  { }%
  {\thmname{#1}\thmnumber{ #2}\thmnote{ (#3)}}
\theoremstyle{proof-sketch}
\newtheorem*{proof-sketch}{Proof Sketch}
\newtheorem*{usage-note}{Usage Note}
\newcommand{\proofpart}[1]{%
  \par%
  \addvspace{\medskipamount}%
  \noindent\emph{#1.}\par\nobreak%
  \addvspace{\smallskipamount}%
  \@afterheading%
}
\newcommand*{\define}[1]{\emph{#1}}
\providecommand\implies{\DOTSB\;\Longrightarrow\;}
\DeclarePairedDelimiter\parens{\lparen}{\rparen}
\DeclarePairedDelimiter\absoluteValueOf{\lvert}{\rvert}
\DeclarePairedDelimiterX\inner[2]{\langle}{\rangle}{#1,#2} % http://math.stackexchange.com/questions/342746/does-bra-ket-notation-work-for-all-inner-product-spaces
\DeclarePairedDelimiter\setOf{\{}{\}} % http://tex.stackexchange.com/questions/5502/how-to-get-a-mid-binary-relation-that-grows
\DeclarePairedDelimiter\net{\{}{\}}
\DeclarePairedDelimiter\family{\{}{\}}
\DeclarePairedDelimiter\sequence{\lparen}{\rparen}
\DeclarePairedDelimiter\gen{\langle}{\rangle}
\mathchardef\breakingcomma\mathcode`\,
\newcommand*\ntuple[1]{\lparen\mathcode`\,=\string"8000 #1\rparen}
\newcommand*{\N}{\mathbb{N}}
\newcommand*{\Z}{{\mathbb{Z}}}
\newcommand*{\R}{\mathbb{R}}
\newcommand*{\ball}{\mathbb{B}}
\newcommand*{\sphere}{\mathbb{S}}
\DeclareMathOperator{\Exists}{\exists}
\DeclareMathOperator{\ForEach}{\forall}
\newcommand*\Holds{:}
\newcommand*\SuchThat{:}
\newcommand*\actsOnPoint{\triangleright}
\newcommand*\actsOnMap{\mathbin{\protect\scalerel*{\blacktriangleright}{\triangleright}}}
\newcommand*\isSemiActedUponBy{\mathbin{\protect\scalerel*{\trianglelefteqslant}{\rhd}}} % \leqslant
\newcommand*\actsByItsCoordinateOn{\mathbin{\protect\scalerel*{\blacktriangleleft}{\triangleleft}}} % TADA Better symbol. We'd like a black triangle with an underscore. See http://tex.stackexchange.com/questions/193832/how-do-i-get-filled-black-and-leq and http://tex.stackexchange.com/questions/237285/how-to-get-trianglelefteq-similar-to-leqslant
\DeclareMathOperator{\symmetricGroupOf}{Sym}
\def\moverlay{\mathpalette\mov@rlay}
\def\mov@rlay#1#2{\leavevmode\vtop{%
   \baselineskip\z@skip \lineskiplimit-\maxdimen
   \ialign{\hfil$\m@th#1##$\hfil\cr#2\crcr}}}
\newcommand{\charfusion}[3][\mathord]{
    #1{\ifx#1\mathop\vphantom{#2}\fi
        \mathpalette\mov@rlay{#2\cr#3}
      }
    \ifx#1\mathop\expandafter\displaylimits\fi}
\newcommand{\disjointUnionWith}{\charfusion[\mathbin]{\cup}{\cdot}}
\newcommand*\boundaryOf{\partial}
\DeclareMathOperator{\entropyOf}{ent}
\DeclareMathOperator{\powerSetOf}{\mathcal{P}}
\DeclareMathOperator{\domainOf}{dom}
\DeclareMathOperator{\differenceOf}{diff}
\DeclareMathOperator{\cylinder}{Cyl}
\newcommand*\occursIn{\sqsubseteq}
\newcommand*\semiOccursIn{\mathrel{\ooalign{$\occursIn$\cr\hidewidth\raise.225ex\hbox{$\circ\mkern.5mu$}\cr}}} % http://tex.stackexchange.com/questions/22371/subseteq-circ-as-a-single-symbol-open-subset/22375#22375 % \overset{\circ}{\occursIn} % \accentset{\circ}{\occursIn} % ^\bullet % http://tex.stackexchange.com/questions/3266/how-do-i-use-a-circle-as-a-math-accent-larger-than-mathring
\newcommand*\suchThat{\mid} % \setOf{... \suchThat ...} % \mathrel{}\middle|\mathrel{} http://tex.stackexchange.com/questions/448/how-to-automatically-resize-the-vertical-bar-in-a-set-comprehension
\newcommand*\from{\colon} % f \from M \to N % \mathpunct{:} http://tex.stackexchange.com/questions/37789/using-colon-or-in-formulas
\newcommand*\modulo{\slash}
\newcommand*{\restrictedTo}{\mathord{\upharpoonright}}
\newcommand*{\blank}{\mathord{\_}} % \cdot http://tex.stackexchange.com/questions/47060/placeholder-for-variable-as-in-fx
\newcommand*{\graffito}[1]{}
\newcommand*{\mathnote}[1]{}
\renewcommand*{\index}[1]{}
\begin{document}
  %\baselineskip=17pt
  %\begin{frontmatter}
    \title[The Moore and the Myhill Property]{The Moore and the Myhill Property For Strongly Irreducible Subshifts Of Finite Type Over Group Sets}
    \author{Simon Wacker}
    \address{%
      Simon Wacker\\
      Department of Informatics\\
      Karlsruhe Institute of Technology\\
      %Institute of Theoretical Informatics\\
      Am Fasanengarten 5\\
      76131 Karlsruhe\\
      Germany
    }
%    \ead{simon.wacker@kit.edu}
    \email{simon.wacker@kit.edu}

  %  \subjclass[2010]{Primary ?, Secondary ?} ... 37B10, 37B15, 68Q80, 43A07
  %  \keywords{cellular automata, group actions, shifts, strongly irreducible subshifts, Moore-Myhill property, Garden of Eden theorem}

    \maketitle

    \begin{abstract}
      We prove the Moore and the Myhill property for strongly irreducible subshifts over right amenable and finitely right generated left homogeneous spaces with finite stabilisers. Both properties together mean that the global transition function of each big-cellular automaton with finite set of states and finite neighbourhood over such a subshift is surjective if and only if it is pre-injective. This statement is known as Garden of Eden theorem. Pre-Injectivity means that two global configurations that differ at most on a finite subset and have the same image under the global transition function must be identical.
%
%      \keywords{cellular automata, group actions, Moore-Myhill property}
    \end{abstract}

%    \begin{keyword}
%      cellular automata\sep group actions\sep shift spaces\sep strongly irreducible subshifts\sep Moore-Myhill property\sep Garden of Eden theorem
%      \MSC[2010] 37B15\sep 37B10
%    \end{keyword}
  %\end{frontmatter}

  %\linenumbers

  The notion of amenability for groups was introduced by John von Neumann in 1929. It generalises the notion of finiteness. A group $G$ is \emph{left} or \emph{right amenable} if there is a finitely additive probability measure on $\powerSetOf(G)$ that is invariant under left and right multiplication respectively. Groups are left amenable if and only if they are right amenable. A group is \emph{amenable} if it is left or right amenable.

  The definitions of left and right amenability generalise to left and right group sets respectively. A left group set $\ntuple{M, G, \actsOnPoint}$ is \emph{left amenable} if there is a finitely additive probability measure on $\powerSetOf(M)$ that is invariant under $\actsOnPoint$. There is in general no natural action on the right that is to a left group action what right multiplication is to left group multiplication. Therefore, for a left group set there is no natural notion of right amenability.

  A transitive left group action $\actsOnPoint$ of $G$ on $M$ induces, for each element $m_0 \in M$ and each family $\family{g_{m_0, m}}_{m \in M}$ of elements in $G$ such that, for each point $m \in M$, we have $g_{m_0, m} \actsOnPoint m_0 = m$, a right quotient set semi-action $\isSemiActedUponBy$ of $G \modulo G_0$ on $M$ with defect $G_0$ given by $m \isSemiActedUponBy g G_0 = g_{m_0, m} g g_{m_0, m}^{-1} \actsOnPoint m$, where $G_0$ is the stabiliser of $m_0$ under $\actsOnPoint$. Each of these right semi-actions is to the left group action what right multiplication is to left group multiplication. They occur in the definition of global transition functions of semi-cellular automata over left homogeneous spaces as defined in \cite{wacker:automata:2016}. A \emph{cell space} is a left group set together with choices of $m_0$ and $\family{g_{m_0, m}}_{m \in M}$.

  A cell space $\mathcal{R}$ is \emph{right amenable} if there is a finitely additive probability measure on $\powerSetOf(M)$ that is semi-invariant under $\isSemiActedUponBy$. For example cell spaces with finite sets of cells, abelian groups, and finitely right generated cell spaces with finite stabilisers of sub-exponential growth are right amenable, in particular, quotients of finitely generated groups of sub-exponential growth by finite subgroups acted on by left multiplication. A net of non-empty and finite subsets of $M$ is a \emph{right Følner net} if, broadly speaking, these subsets are asymptotically invariant under $\isSemiActedUponBy$. A finite subset $E$ of $G \modulo G_0$ and two partitions $\family{A_e}_{e \in E}$ and $\family{B_e}_{e \in E}$ of $M$ constitute a \emph{right paradoxical decomposition} if the map $\blank \isSemiActedUponBy e$ is injective on $A_e$ and $B_e$, and the family $\family{(A_e \isSemiActedUponBy e) \disjointUnionWith (B_e \isSemiActedUponBy e)}_{e \in E}$ is a partition of $M$. The Tarski-Følner theorem states that right amenability, the existence of right Følner nets, and the non-existence of right paradoxical decompositions are equivalent. We prove it in \cite{wacker:amenable:2016} for cell spaces with finite stabilisers.

  A cell space $\mathcal{R}$ is \emph{finitely right generated} if there is a finite subset $S$ of $G \modulo G_0$ such that, for each cell $m \in M$, there is a family $\family{s_i}_{i \in \setOf{1,2,\dotsc,k}}$ of elements in $S \cup S^{-1}$ such that $m = (((m_0 \isSemiActedUponBy s_1) \isSemiActedUponBy s_2) \isSemiActedUponBy \dotsb) \isSemiActedUponBy s_k$. The finite right generating set $S$ induces the \emph{$S$-Cayley graph} structure on $M$: For each cell $m \in M$ and each generator $s \in S$, there is an edge from $m$ to $m \isSemiActedUponBy s$. The length of the shortest path between two points of $M$ yields the \emph{$S$-metric}. %The ball of radius $\rho \in \N_0$ centred at $m \in M$, denoted by $\ball_S(m, \rho)$, is the set of all points whose distance to $m$ is less than or equal to $\rho$.

  % TODO Hier fallen schnell viele Begriffe vom Himmel. Aber es soll auch nur eine grobe Idee vermitteln. % TODO Das ganze liest sich wie eine Aufzählung. Sollten eine Geschichte daraus machen.
  A subset $X$ of $Q^M$, where $Q$ is a finite set, is a \emph{shift space of finite type} if it is generated by a finite set of forbidden blocks or, equivalently, if it is shift-invariant and compact. %it contains all points in which the blocks of a subset of $Q^{\ball_S(\rho)}$ for some $\rho$ do not occur.
  Such a space $X$ is \emph{strongly irreducible} if each pair of finite patterns that are allowed in $X$ and at least some fixed positive integer apart, are embedded in a point of $X$. A map $\Delta$ from a shift space $X$ to a shift space $Y$ is \emph{local} if the state $\Delta(x)(m)$ is uniformly and locally determined in $m$, in other words, if the map $\Delta$ is the restriction of the global transition function of a big-cellular automaton with finite neighbourhood to the domain $X$ and the codomain $Y$. % is uniformly defined in $m$ and determined by the states of finitely many neighbours of $m$. % TODO local vs. cellular automaton (the latter is used in the abstract) ... das wollen wir auch so: Im Paper local (da kurz und bündig) im Abstract nicht (da dort noch nicht bekannt).

  For a right amenable and finitely right generated cell space with finite stabilisers we may choose a right Følner net $\mathcal{F} = \family{F_i}_{i \in I}$. The \define{entropy} of a subset $X$ of $Q^M$ with respect to $\mathcal{F}$, where $Q$ is a finite set, is, broadly speaking, the asymptotic growth rate of the number of finite patterns with domain $F_i$ that occur in $X$. For non-negative integers $\theta$, $\kappa$, and $\theta'$, a \emph{$\ntuple{\theta, \kappa, \theta'}$-tiling} is a subset $T$ of $M$ such that $\family{\ball(t, \theta)}_{t \in T}$ is pairwise at least $\kappa + 1$ apart and $\family{\ball(t, \theta')}_{t \in T}$ is a cover of $M$. If for each point $t \in T$ not all patterns with domain $\ball(t, \theta)$ occur in a subset of $Q^M$, then that subset does not have maximal entropy. %(see Lemma~\ref{lem:entorpy-bounded-above-if-strange-tiling-exists}).

  A local map from a non-empty strongly irreducible shift space of finite type to a strongly irreducible shift space with the same entropy over a right amenable and finitely right generated cell space with finite stabilisers is surjective if and only if its image has maximal entropy
%  (see Theorem~\ref{thm:not-surjective-implies-less-entropy})
  and its image has maximal entropy if and only if it is pre-injective.
%  (see Theorem~\ref{thm:not-pre-injective-implies-less-entropy} and Theorem~\ref{thm:less-entropy-implies-not-pre-injective})
  This establishes the Garden of Eden theorem,
%  (see Main Theorem~\ref{thm:garden-of-eden})
  which states that a local map as above is surjective if and only if it is pre-injective. This answers a question posed by Sébastien Moriceau at the end of his paper \enquote{Cellular Automata on a $G$-Set}\cite{moriceau:2011}. And it follows that strongly irreducible shift spaces of finite type over right amenable and finitely right generated cell spaces have the Moore and the Myhill property.
  % TADA Etwas dazu sagen, warum man \isSemiActedUponBy überhaupt benötigt. Es kommt eben in meiner Formulierung von globalen Überpfühungsfunktionen vor und deswegen ...?

  The Garden of Eden theorem for cellular automata over $\Z^2$ is a famous theorem by Edward Forrest Moore and John R. Myhill from 1962 and 1963, which was proved in their papers \enquote{Machine models of self-reproduction}\cite{moore:1962} and \enquote{The converse of Moore's Garden-of-Eden theorem}\cite{myhill:1963}. That theorem also holds for cellular automata over amenable finitely generated groups, which was proved by Tullio Ceccherini-Silberstein, Antonio Machi, and Fabio Scarabotti in their paper \enquote{Amenable groups and cellular automata}\cite{ceccherini-silberstein:machi:scarabotti:1999}. It even holds for such automata on strongly irreducible shifts of finite type, which was proved by Francesca Fiorenzi in her paper \enquote{Cellular automata and strongly irreducible shifts of finite type}\cite{fiorenzi:2003}. The present paper generalises results from and is greatly inspired by Francesca Fiorenzi's paper.

  In \cref{sec:full-shifts} we introduce full shifts, patterns, blocks, and shift-invariance. In \cref{sec:shift-spaces} we introduce shift spaces or subshifts (of finite type), strong irreducibility, bounded propagation, local maps, conjugacies, and the Moore and the Myhill property. In \cref{sec:tilings} we introduce tilings, prove their existence, and relate them to entropies. And in \cref{sec:gardens-of-eden} we prove the Garden of Eden theorem, from which we deduce that both the Moore and the Myhill property hold.

  \subsubsection*{Preliminary Notions.} A \define{left group set} is a triple $\ntuple{M, G, \actsOnPoint}$, where $M$ is a set, $G$ is a group, and $\actsOnPoint$ is a map from $G \times M$ to $M$, called \define{left group action of $G$ on $M$}, such that $G \to \symmetricGroupOf(M)$, $g \mapsto [g \actsOnPoint \blank]$, is a group homomorphism. The action $\actsOnPoint$ is \define{transitive} if $M$ is non-empty and for each $m \in M$ the map $\blank \actsOnPoint m$ is surjective; and \define{free} if for each $m \in M$ the map $\blank \actsOnPoint m$ is injective. For each $m \in M$, the set $G \actsOnPoint m$ is the \define{orbit of $m$}, the set $G_m = (\blank \actsOnPoint m)^{-1}(m)$ is the \define{stabiliser of $m$}, and, for each $m' \in M$, the set $G_{m, m'} = (\blank \actsOnPoint m)^{-1}(m')$ is the \define{transporter of $m$ to $m'$}.

  A \define{left homogeneous space} is a left group set $\mathcal{M} = \ntuple{M, G, \actsOnPoint}$ such that $\actsOnPoint$ is transitive. A \define{coordinate system for $\mathcal{M}$} is a tuple $\mathcal{K} = \ntuple{m_0, \family{g_{m_0, m}}_{m \in M}}$, where $m_0 \in M$ and for each $m \in M$ we have $g_{m_0, m} \actsOnPoint m_0 = m$. The stabiliser $G_{m_0}$ is denoted by $G_0$. The tuple $\mathcal{R} = \ntuple{\mathcal{M}, \mathcal{K}}$ is a \define{cell space}. The map $\isSemiActedUponBy \from M \times G \modulo G_0 \to M$, $(m, g G_0) \mapsto g_{m_0, m} g g_{m_0, m}^{-1} \actsOnPoint m\ (= g_{m_0, m} g \actsOnPoint m_0)$ is a \define{right semi-action of $G \modulo G_0$ on $M$ with defect $G_0$}, which means that
  \begin{gather*}
    \ForEach m \in M \Holds m \isSemiActedUponBy G_0 = m,\\
    \ForEach m \in M \ForEach g \in G \Exists g_0 \in G_0 \SuchThat \ForEach \mathfrak{g}' \in G \modulo G_0 \Holds
          m \isSemiActedUponBy g \cdot \mathfrak{g}' = (m \isSemiActedUponBy g G_0) \isSemiActedUponBy g_0 \cdot \mathfrak{g}'.
  \end{gather*}
  It is \define{transitive}, which means that the set $M$ is non-empty and for each $m \in M$ the map $m \isSemiActedUponBy \blank$ is surjective; and \define{free}, which means that for each $m \in M$ the map $m \isSemiActedUponBy \blank$ is injective; and \define{semi-commutes with $\actsOnPoint$}, which means that
  \begin{equation*}
    \ForEach m \in M \ForEach g \in G \Exists g_0 \in G_0 \SuchThat \ForEach \mathfrak{g}' \in G \modulo G_0 \Holds
          (g \actsOnPoint m) \isSemiActedUponBy \mathfrak{g}' = g \actsOnPoint (m \isSemiActedUponBy g_0 \cdot \mathfrak{g}').
  \end{equation*}
  The maps $\iota \from M \to G \modulo G_0$, $m \mapsto G_{m_0, m}$, and $m_0 \isSemiActedUponBy \blank$ are inverse to each other. Under the identification of $M$ with $G \modulo G_0$ by either of these maps, we have $\isSemiActedUponBy \from (m, \mathfrak{g}) \mapsto g_{m_0, m} \actsOnPoint \mathfrak{g}$. (See \cite{wacker:automata:2016}.)

  A left homogeneous space $\mathcal{M}$ is \define{right amenable} if there is a coordinate system $\mathcal{K}$ for $\mathcal{M}$ and there is a finitely additive probability measure $\mu$ on $M$ such that % TODO define "finitely additive probability measure"
  \begin{equation*}
    \ForEach \mathfrak{g} \in G \modulo G_0 \ForEach A \subseteq M \Holds \parens[\big]{(\blank \isSemiActedUponBy \mathfrak{g})\restrictedTo_A \text{ injective} \implies \mu(A \isSemiActedUponBy \mathfrak{g}) = \mu(A)},
  \end{equation*}
  in which case the cell space $\mathcal{R} = \ntuple{\mathcal{M}, \mathcal{K}}$ is called \define{right amenable}. When the stabiliser $G_0$ is finite, that is the case if and only if there is a \define{right Følner net in $\mathcal{R}$ indexed by $(I, \leq)$}, which is a net $\net{F_i}_{i \in I}$ in $\setOf{F \subseteq M \suchThat F \neq \emptyset, F \text{ finite}}$ such that
  \begin{equation*} % TODO This is not the definition, but the characterisation thm:k-boundary-characterisation-of-folner-net in "Finitely Gen"-Paper. And that's why we cited wacker:growth:2017 below also.
    \ForEach \rho \in \N_0 \Holds \lim_{i \in I} \frac{\absoluteValueOf{\boundaryOf_\rho F_i}}{\absoluteValueOf{F_i}} = 0.
%    \ForEach \mathfrak{g} \in G \modulo G_0 \Holds \lim_{i \in I} \frac{\absoluteValueOf{F_i \smallsetminus (\blank \isSemiActedUponBy \mathfrak{g})^{-1}(F_i)}}{\absoluteValueOf{F_i}} = 0.
  \end{equation*}
  If a net is a right Følner net for one coordinate system, then it is a right Følner net for each coordinate system. In particular, a left homogeneous space $\mathcal{M}$ with finite stabilisers is right amenable if and only if, for each coordinate system $\mathcal{K}$ for $\mathcal{M}$, the cell space $\ntuple{\mathcal{M}, \mathcal{K}}$ is right amenable. (See \cite{wacker:amenable:2016,wacker:growth:2017}.)

  A left homogeneous space $\mathcal{M}$ is \define{finitely right generated} if there is a coordinate system $\mathcal{K}$ for $\mathcal{M}$ and
%  A cell space $\mathcal{R}$ is \define{finitely right generated} if
  there is a finite subset $S$ of $G \modulo G_0$ such that $G_0 \cdot S \subseteq S$ and, for each $m \in M$, there is a $k \in \N_0$ and there is a $\family{s_i}_{i \in \setOf{1,2,\dotsc,k}} \subseteq S \cup S^{-1}$, where $S^{-1} = \setOf{g^{-1} G_0 \suchThat s \in S, g \in s}$, such that
  \begin{equation*}
    m = \parens[\Big]{\parens[\big]{(m_0 \isSemiActedUponBy s_1) \isSemiActedUponBy s_2} \dotsb} \isSemiActedUponBy s_k.
  \end{equation*}
  in which case the cell space $\mathcal{R} = \ntuple{\mathcal{M}, \mathcal{K}}$ is called \define{finitely right generated}. The left homogeneous space $\mathcal{M}$ is finitely right generated if and only if, for each coordinate system $\mathcal{K}$ for $\mathcal{M}$, the cell space $\ntuple{\mathcal{M}, \mathcal{K}}$ is finitely right generated.
  The right generating set $S$ is \define{symmetric} if $S^{-1} \subseteq S$. The $S$-edge-labelled directed multigraph $\ntuple{M, E, \sigma, \tau, \lambda}$, where $E = \setOf{(m, s, m \isSemiActedUponBy s) \suchThat m \in M, s \in S}$ and $\sigma \from E \to M$, $\lambda \from E \to S$, and $\tau \from E \to M$ are the projections to the first, second, and third component respectively, is the \define{$S$-Cayley graph}. The distance $d_S$ on that graph is the \define{$S$-metric} and the map $\absoluteValueOf{\blank}_S = d_S(m_0, \blank)$ is the \define{$S$-length}. For each $m \in M$ and each $\rho \in \N_0$, the sets
  \begin{align*}
    \ball_S(m, \rho) &= \setOf{m' \in M \suchThat d_S(m, m') \leq \rho},\\
    \sphere_S(m, \rho) &= \setOf{m' \in M \suchThat d_S(m, m') = \rho}
  \end{align*}
  are the \define{ball/sphere of radius $\rho$ centred at $m$}, the ball $\ball_S(m_0, \rho)$ is denoted by $\ball_S(\rho)$, and the sphere $\sphere_S(m_0, \rho)$ by $\sphere_S(\rho)$. For each $A \subseteq M$, each $\theta \in \N_0$, the set $A^{-\theta} = \setOf{m \in A \suchThat \ball_S(m, \theta) \subseteq A}$ is the \define{$\theta$-interior of $A$}, the set $A^{+\theta} = \setOf{m \in M \suchThat \ball_S(m, \theta) \cap A \neq \emptyset}$ is the \define{$\theta$-closure of $A$}, the set $\boundaryOf_\theta A = A^{+\theta} \smallsetminus A^{-\theta}$ is the \define{$\theta$-boundary of $A$}, the set $\boundaryOf_\theta^- A = A \smallsetminus A^{-\theta}$ is the \define{internal $\theta$-boundary of $A$}, and the set $\boundaryOf_\theta^+ A = A^{+\theta} \smallsetminus A$ is the \define{external $\theta$-boundary of $A$}. (See \cite{wacker:growth:2017}.)

  % TODO cellular automata benötigen wir eigentlich nicht. Aber \actsOnMap durchaus!
  A \define{semi-cellular automaton} is a quadruple $\mathcal{C} = \ntuple{\mathcal{R}, Q, N, \delta}$, where $\mathcal{R}$ is a cell space; $Q$, called \define{set of states}, is a set; $N$, called \define{neighbourhood}, is a subset of $G \modulo G_0$ such that $G_0 \cdot N \subseteq N$; and $\delta$, called \define{local transition function}, is a map from $Q^N$ to $Q$. A \define{local configuration} is a map $\ell \in Q^N$, a \define{global configuration} is a map $c \in Q^M$, and a \define{pattern} is a map $p \in Q^A$, where $A$ is a subset of $M$. The stabiliser $G_0$ acts on $Q^N$ on the left by $\bullet \from G_0 \times Q^N \to Q^N$, $(g_0, \ell) \mapsto [n \mapsto \ell(g_0^{-1} \cdot n)]$, and the group $G$ acts on the set of patterns on the left by % TODO pattern is defined here and below again!
  \begin{align*}
    \actsOnMap \from G \times \bigcup_{A \subseteq M} Q^A &\to     \bigcup_{A \subseteq M} Q^A,\\
                                                           (g, p) &\mapsto \left[
                                                                             \begin{aligned}
                                                                               g \actsOnPoint \domainOf(p) &\to     Q,\\
                                                                                                      m &\mapsto p(g^{-1} \actsOnPoint m).
                                                                             \end{aligned}
                                                                           \right]
  \end{align*}
%  \begin{definition} % TADA Remark that for p \in Q^M this is the "usual" \actsOnMap
%    Let $P$ be the set $\setOf{p \in Q^A \suchThat A \subseteq M}$. The group $G$ acts on $P$ on the left by
%    \begin{align*}
%      \actsOnMap \from G \times P &\to     P, \mathnote{left group action $\actsOnMap$ of $G$ on $P$}\\
%                                   (g, p) &\mapsto \left[
%                                                     \begin{aligned}
%                                                       g \actsOnPoint \domainOf(p) &\to     Q,\\
%                                                                              m &\mapsto p(g^{-1} \actsOnPoint m).
%                                                     \end{aligned}
%                                                   \right]
%    \end{align*}
%  \end{definition}
%
%  \begin{proof}
%    According to Example~\ref{ex:induced-left-action-on-functions}, the map $\actsOnMap$ is a left group action.
%  \end{proof}
  The \define{global transition function of $\mathcal{C}$} is the map $\Delta \from Q^M \to Q^M$, $c \mapsto [m \mapsto \delta(n \mapsto c(m \isSemiActedUponBy n))]$.

  A \define{cellular automaton} is a semi-cellular automaton $\mathcal{C} = \ntuple{\mathcal{R}, Q, N, \delta}$ such that $\delta$ is \define{$\bullet$-invariant}, which means that, for each $g_0 \in G_0$, we have $\delta(g_0 \bullet \blank) = \delta(\blank)$. Its global transition function is $\actsOnMap$-equivariant, which means that, for each $g \in G$, we have $\Delta(g \actsOnMap \blank) = g \actsOnMap \Delta(\blank)$.

  A subgroup $H$ of $G$ is \define{$\mathcal{K}$-big} if the set $\setOf{g_{m_0, m} \suchThat m \in M}$ is included in $H$. A \define{big-cellular automaton} is a semi-cellular automaton $\mathcal{C} = \ntuple{\mathcal{R}, Q, N, \delta}$ such that, for some $\mathcal{K}$-big subgroup $H$ of $G$, the local transition function $\delta$ is \define{$\bullet_{G_0 \cap H}$-invariant}, which means that, for each $h_0 \in G_0 \cap H$, we have $\delta(h_0 \bullet \blank) = \delta(\blank)$. Its global transition function is $\actsOnMap_H$-equivariant, which means that, for each $h \in H$, we have $\Delta(h \actsOnMap \blank) = h \actsOnMap \Delta(\blank)$. Note that each $\mathcal{K}$-big subgroup of $G$ includes the subgroup of $G$ generated by $\setOf{g_{m_0, m} \suchThat m \in M}$ and that hence a semi-cellular automaton is a big-cellular automaton if and only if its local transition function is $\bullet_{G_0 \cap \gen{g_{m_0, m} \suchThat m \in M}}$-invariant. (See \cite{wacker:automata:2016}.)

  % TODO Uncomment this. And comment the corresponding parts in the text?
%  Identify $M$ with $G \modulo G_0$ by $\iota \from m \mapsto G_{m_0, m}$. The map
%  \begin{align*}
%    \actsByItsCoordinateOn \from M \times \bigcup_{A \subseteq M} Q^A &\to \bigcup_{A \subseteq M} Q^A,\\
%    (m, p) &\mapsto \left[
%                      \begin{aligned}
%                        m \isSemiActedUponBy \domainOf(p) &\to     Q,\\
%                                 m \isSemiActedUponBy a &\mapsto p(a),
%                      \end{aligned}
%                    \right]
%  \end{align*} % TADA Note that even if H = \gen{g_{m_0, m} \suchThat m \in M}, the notion of $\actsOnMap_H$-equivariance is in general not equivalent to $\actsByItsCoordinateOn$-equivariance.
%  broadly speaking, maps a point $m$ and a pattern $p$ that is centred at $m_0$ to the corresponding pattern centred at $m$. For each cell $m \in M$, each subset $A$ of $M$, and each pattern $p \in Q^A$, we have $m \actsByItsCoordinateOn p = g_{m_0, m} \actsOnMap p$. It follows that the global transition function $\Delta$ of a big-cellular automaton is $\actsByItsCoordinateOn$-equivariant, which means that, for each $m \in M$, we have $\Delta(m \actsByItsCoordinateOn \blank) = m \actsByItsCoordinateOn \Delta(\blank)$. %makes a point $m$ the centre of a pattern $p$ that was centred at $m_0$.

  For each $A \subseteq M$, let $\pi_A \from Q^M \to Q^A$, $c \mapsto c\restrictedTo_A$.

  \subsubsection*{Context.} In the present paper, let $\mathcal{R} = \ntuple{\mathcal{M}, \mathcal{K}} = \ntuple{\ntuple{M, G, \actsOnPoint}, \ntuple{m_0, \family{g_{m_0, m}}_{m \in M}}}$ be a finitely right generated cell space such that the stabiliser $G_0$ of $m_0$ under $\actsOnPoint$ is finite; let $S$ be a finite and symmetric right generating set of $\mathcal{R}$; let $H$ be a $\mathcal{K}$-big subgroup of $G$; let $H_0$ be the stabiliser of $m_0$ under $\actsOnPoint\restrictedTo_{H \times M}$, which is $H \cap G_0$; for each cell $m \in M$, let $H_{m_0, m}$ be the transporter of $m_0$ to $m$ under $\actsOnPoint\restrictedTo_{H \times M}$; let $Q$ be a finite set; let $Q^M$ be equipped with the prodiscrete topology; and identify $M$ with $G \modulo G_0$ by $\iota \from m \mapsto G_{m_0, m}$. Moreover, we omit the subscript $S$, in particular, instead of $d_S$ we write $d$, instead of $\absoluteValueOf{\blank}_S$ we write $\absoluteValueOf{\blank}$, instead of $\ball_S$ we write $\ball$, and instead of $\sphere_S$ we write $\sphere$. % TODO Note: G_0 endlich, damit ein right amenable cell space ein Folner-Netz hat! % TODO Würden wir H wie folgt einführen "let $H$ be the subgroup of $G$ generated by $\setOf{g_{m_0, m} \suchThat m \in M}$", dann wäre eine $\kappa$-local eindeutig durch $\mathcal{R}$ festgelegt.
  % let $\mathcal{G} = \ntuple{M, E, \sigma, \tau, \lambda}$ be the $S$-Cayley graph of $\mathcal{R}$

  % TODO State properties of interiors, etc. once with references (like repeated applications of closures etc.) and remove "according to ..." in the main text regarding these properties. Really?

  %----------------------------------------------------------------------------------------
  \section{Full Shifts} % § 1.1
  \label{sec:full-shifts}
  %----------------------------------------------------------------------------------------

  \subsubsection*{Contents.} The full shift is the set of global configurations, the points of the full shift (see \cref{def:full-shift}). A pattern is a map from a subset of cells to the set of states (see \cref{def:pattern}), its size is the cardinality of its domain (see \cref{def:pattern:size}), it is empty if its domain is empty (see \cref{def:pattern:empty}), it is finite and called \emph{block} if its domain is finite (see \cref{def:pattern:finite,def:block}), and restricting its domain yields a subpattern (see \cref{def:pattern:subpattern}). A subset of the full shift is shift-invariant if it is invariant under a group that contains the coordinates (see \cref{def:shift-invariant}). A pattern centred at the origin can be shifted to a new centre by sort of acting on the new centre (see \cref{def:induced-right-semi-action}).

   % TODO? Introductions to definitions and lemmata of this section

  \begin{definition}
  \label{def:full-shift}
    The set $Q^M$ is called \define{full shift}\graffito{full shift $Q^M$} and each element $c \in Q^M$ is called \define{point}\graffito{point $c$}.
  \end{definition}

  \begin{example}[{\cite[Definition~1.1.1]{lind:marcus:1995}}]
  \label{ex:full-shift-over-integers}
    Let $\mathcal{M}$ be the left homogeneous space $\ntuple{\Z, \Z, +}$, let $\mathcal{K}$ be the coordinate system $\ntuple{0, \family{z}_{z \in \Z}}$, let $\mathcal{R}$ be the cell space $\ntuple{\mathcal{M}, \mathcal{K}}$, let $S$ be the set $\setOf{-1, 1}$, let $H$ be the only $\mathcal{K}$-big subgroup $\Z$ of $\Z$, and let $Q$ be the binary set $\setOf{0, 1}$. The stabiliser $\Z_0$ of $0$ under $+$ is the singleton set $\setOf{0}$; under the identification of $\Z$ with $\Z \modulo \Z_0$ by $z \mapsto z + \Z_0$, the right semi-action of $\Z \modulo \Z_0$ on $\Z$ is but $+$; and the set $S$ is a finite and symmetric right generating set of $\mathcal{R}$. The full shift $Q^\Z$ is the usual full $2$-shift considered in symbolic dynamics and its points are called \define{bi-infinite binary sequences}.
  \end{example}

  \begin{remark}
    There is a bijective map $\varphi$ from $Q$ to $\Z_{\absoluteValueOf{Q}}\ (= \setOf{0, 1, \dotsc, \absoluteValueOf{Q} - 1})$. It induces the bijective map
    \begin{align*}
      \Phi \from Q^M &\to     \Z_{\absoluteValueOf{Q}}^M,\\
                   c &\mapsto \big[m \mapsto \varphi\parens[\big]{c(m)}\big]. %\qedhere
    \end{align*}
  \end{remark}

  \begin{definition} % TODO pattern wird im Einleitungstext bereits definiert
  \label{def:pattern}
    Let $A$ be a subset of $M$ and let $p$ be a map from $A$ to $Q$. The map $p$ is called \define{$A$-pattern}\graffito{$A$-pattern $p$} and the set $\domainOf(p) = A$ is called \define{domain of $p$}\graffito{domain $\domainOf(p)$ of $p$}. % TODO Note that domain is usually called support but this is ambiguous because it means something else for global configurations for which the set of states is a linear space.
  \end{definition}

  \begin{definition}
  \label{def:pattern:size}
    Let $p$ be an $A$-pattern. The cardinal number $\absoluteValueOf{p} = \absoluteValueOf{A}$ is called \define{size of $p$}\graffito{size $\absoluteValueOf{p}$ of $p$}.
  \end{definition}

  %\begin{remark}
  %  Let $Q$ be a set. There is one and only one pattern of size $0$ with codomain $Q$, namely the map $\varepsilon \from \emptyset \to Q$.
  %\end{remark}

  \begin{definition}
  \label{def:pattern:empty}
    The $\emptyset$-pattern is called \define{empty}\graffito{empty pattern $\varepsilon$}. % and is denoted by $\varepsilon$.
  \end{definition}

  \begin{remark}
    The empty pattern is the only one of size $0$.
  \end{remark}

  \begin{definition}
  \label{def:pattern:finite}
    Let $u$ be an $F$-pattern. It is called \define{finite}\graffito{finite pattern $u$} and \define{$F$-block}\graffito{$F$-block $u$} if and only if its domain $F$ is finite.
  \end{definition}

  \begin{definition}
  \label{def:block}
    %Each finite $F$-pattern $u$ is called \define{$F$-block}\graffito{$F$-block $u$} and
    The set $\setOf{u \in Q^F \suchThat F \subseteq M \text{ finite}}$ of all blocks is denoted by $Q^*$\graffito{set $Q^*$ of all blocks}.
%    Each $\ball(\rho)$-pattern $u$ is called \define{$\rho$-block}\graffito{$\rho$-block $u$} and the set $\bigcup_{\rho \in \N_0} Q^{\ball(\rho)}$ of all blocks is denoted by $Q^*$\graffito{set $Q^*$ of all blocks}.
  %  Let $u$ be an $F$-pattern over $Q$. It is called \define{$F$-block over $Q$}\graffito{$F$-block $u$ over $Q$} if and only if the set $F$ is finite and the subgraph of $\mathcal{G}$ induced by $F$ is strongly connected. % induced subgraphs are only defined for unlabelled graphs
  \end{definition}

  \begin{definition} % confer http://planetmath.org/subfunction
  \label{def:pattern:subpattern}
    Let $p$ be an $A$- and let $p'$ be an $A'$-pattern. The pattern $p$ is called \define{subpattern of $p'$}\graffito{subpattern $p$ of $p'$} if and only if $A \subseteq A'$ and $p = p'\restrictedTo_A$.
  \end{definition}

  \begin{definition}
  \label{def:shift-invariant}
    Let $X$ be a subset of $Q^M$. It is called \define{shift-invariant}\graffito{shift-invariant} if and only if
    \begin{equation*}
      \ForEach h \in H \Holds h \actsOnMap X = X. %\qedhere
    \end{equation*}
  \end{definition}

  \begin{remark}
    Shift-invariance is the same as $\actsOnMap_H$-invariance.
  \end{remark}

  \begin{remark}
  \label{rem:subseteq-sufficient-for-shift-invariance}
    The set $X$ is shift-invariant if and only if
    \begin{equation*}
      \ForEach h \in H \Holds h \actsOnMap X \subseteq X.
    \end{equation*}
    %Indeed, let $h \in H$. Then, $h \actsOnMap X \subseteq X$. And, $h^{-1} \actsOnMap X \subseteq X$. Hence, $X = h h^{-1} \actsOnMap X \subseteq h \actsOnMap X$. Therefore, $h \actsOnMap X = X$.
  \end{remark}

  \begin{definition}
  \label{def:induced-right-semi-action}
    The map
    \begin{align*}
      \actsByItsCoordinateOn \from M \times \bigcup_{A \subseteq M} Q^A &\to     \bigcup_{A \subseteq M} Q^A,\\
                                                                  (m, p) &\mapsto \left[
                                                                                     \begin{aligned}
                                                                                       m \isSemiActedUponBy \domainOf(p) &\to     Q,\\
                                                                                                m \isSemiActedUponBy a &\mapsto p(a),
                                                                                     \end{aligned}
                                                                                   \right] %\qedhere
    \end{align*}
    broadly speaking, maps a point $m$ and a pattern $p$ that is centred at $m_0$ to the corresponding pattern centred at $m$.
  \end{definition}

  \begin{remark}
    Let $p$ be an $A$-pattern and let $m$ be an element of $M$. Then, $m \actsByItsCoordinateOn p = g_{m_0, m} \actsOnMap p$ and $\domainOf(m \actsByItsCoordinateOn p) = g_{m_0, m} \actsOnPoint \domainOf(p)$. % TODO Das passt gerade mit $m \isSemiActedUponBy a = g_{m_0, m} \actsOnPoint a$ zusammen
    % TODO Diese Begründung einleiten und einsortieren. For each cell $m \in M$, ...
  %    \begin{multline*}
  %      (m \actsByItsCoordinateOn p)(m \isSemiActedUponBy a)
  %      = p(a)
  %      = p(g_{m_0, m}^{-1} \actsOnPoint (g_{m_0, m} \actsOnPoint a))\\
  %      = p(g_{m_0, m}^{-1} \actsOnPoint (m \isSemiActedUponBy a))
  %      = (g_{m_0, m} \actsOnMap p)(m \isSemiActedUponBy a).
  %    \end{multline*}
  \end{remark}

  \begin{remark}
  \label{rem:shift-invariance-induces-invariance-under-right-semi-action}
    Let $X$ be a shift-invariant subset of $Q^M$. Then,
    \begin{equation*}
      \ForEach m \in M \Holds m \actsByItsCoordinateOn X = X. %\qedhere
    \end{equation*}
  \end{remark}

  %----------------------------------------------------------------------------------------
  \section{Shift Spaces} % § 1.2
  \label{sec:shift-spaces}
  %----------------------------------------------------------------------------------------

  % TODO Use contents and body or not? % TODO paragraph vs. subsubsection % TODO Above we defined "blocks" as "finite patterns" but we still use the terms "blocks" and "finite patterns". The reason is that before "blocks" were defined differentely and the distinction was important. Although it is now superfluous I'd like to keep the distinction.
  \subsubsection*{Contents.} A pattern semi-occurs in another pattern if a rotation of it occurs in the other pattern (see \cref{def:occurs-semi-occurs,rem:semi-occurs-at,rem:semi-occurs}). It is allowed in a subset of the full shift if it semi-occurs in one of its points and it is forbidden otherwise (see \cref{def:allowed-forbidden}). The set of points of the full shift in which each block of a given set is forbidden is generated by that set (see \cref{def:generated-by-forbidden-blocks}) and it is a shift space (see \cref{def:shift-space}), which is shift-invariant (see \cref{lem:shift-space-is-shift-invariant}), closed (see \cref{lem:if-restrictions-to-balls-are-in-shift-then-so-is-pattern}), and compact (see \cref{lem:shift-space-iff-invariant-and-compact}).

  If there is a finite generating set, then the shift space is of finite type (see \cref{def:of-finite-type}), the radius $\kappa$ of a ball that includes the domains of the patterns of the generating set is its memory and it itself is called $\kappa$-step (see \cref{def:step-and-memory,lem:of-finite-type-implies-step}), and its points are characterised by restrictions to balls with its memory as radius (see \cref{lem:characterisation-of-kappa-step-subshifts}). Finitely many points of shift spaces of finite type can be, in various ways, cut into pieces and glued together to construct new points, as long as the pieces agree on a big enough boundary of the cuts (see \cref{lem:overlapping-global-configurations-can-be-glued,cor:overlapping-global-configurations-can-be-glued,cor:overlapping-patterns-can-be-glued}). A shift space is strongly irreducible if allowed finite patterns that are at least a certain distance apart can be embedded in the same point (see \cref{def:kappa-strongly-irreducible,def:strongly-irreducible}). It has bounded propagation if finite patterns are allowed whenever their restrictions to balls of a certain radius are allowed (see \cref{def:rho-bounded-propagation,def:bounded-propagation}). Such spaces are strongly irreducible and of finite type (see \cref{lem:bounded-propagation-implies-strong-irreducibility-and-finite-type}).

  A map from a shift space to another one is local if it is uniformly and locally determined in each cell (see \cref{def:kappa-local-map,def:local-map}). Such maps are global transition functions of big-cellular automata with finite neighbourhoods (see \cref{rem:local-map-iff-cellular-automaton}), their domains and codomains can be simultaneously restricted to a subset of cells and its interior (see \cref{def:restriction-of-local-map}), and their images are shift spaces (see \cref{lem:image-of-local-map-is-subshift}). The difference of two points of the full shift is the set of cells in which they differ (see \cref{def:difference-of-two-points-of-the-full-shift}) and a local map is pre-injective if it is injective on points with finite support (see \cref{def:pre-injective-for-local-maps}). A local map that has a local inverse is a conjugacy (see \cref{def:conjugacy}), and its domain and codomain are conjugate (see \cref{def:conjugate}). Entropy is invariant under conjugacy (see \cref{lem:entropy-invariant-under-conjugacy}). A subshift has the Moore property if each surjective local map is pre-injective, and the Myhill property if the converse holds (see \cref{def:Moore-and-Myhill-properties}). Both these properties are invariant under conjugacy (see \cref{rem:Moore-and-Myhill-are-invariant-under-conjugacy}).

  \subsubsection*{Body.} A pattern occurs in another pattern if a translation of it coincides with a subpattern the other pattern and it semi-occurs in another pattern if a rotation and translation of it coincides with a subpattern of the other pattern, as defined in

  \begin{definition}
  \label{def:occurs-semi-occurs}
    Let $p$ be an $A$-pattern and let $p'$ be an $A'$-pattern.
    \begin{enumerate} % Let $m$ be an element of $M$. The pattern $p$ is said to
      \item Let $m$ be an element of $M$. The pattern $p$ is said to \define{occur at $m$ in $p'$}\graffito{$p$ occurs at $m$ in $p'$} and we write $p \occursIn_m p'$\graffito{$p \occursIn_m p'$} if and only if
            \begin{equation*}
              m \isSemiActedUponBy A \subseteq A' \text{ and } m \actsByItsCoordinateOn p = p'\restrictedTo_{m \isSemiActedUponBy A}.
            \end{equation*}
            And it is said to \define{semi-occur at $m$ in $p'$}\graffito{$p$ semi-occurs at $m$ in $p'$} and we write $p \semiOccursIn_m p'$\graffito{$p \semiOccursIn_m p'$} if and only if
            \begin{equation*}
              \Exists h \in H_{m_0, m} \Holds h \actsOnPoint A \subseteq A' \text{ and } h \actsOnMap p = p'\restrictedTo_{h \actsOnPoint A}.
            \end{equation*}
      \item The pattern $p$ is said to \define{occur in $p'$}\graffito{$p$ occurs in $p'$} and we write $p \occursIn p'$\graffito{$p \occursIn p'$} if and only if %there is an $m \in M$ such that $p$ occurs at $m$ in $p'$. %\qedhere
            \begin{equation*}
              \Exists m \in M \SuchThat p \occursIn_m p'.
            \end{equation*}
            And it is said to \define{semi-occur in $p'$}\graffito{$p$ semi-occurs in $p'$} and we write $p \semiOccursIn p'$\graffito{$p \semiOccursIn p'$} if and only if %there is an $m \in M$ such that $p$ semi-occurs at $m$ in $p'$. %\qedhere
            \begin{equation*}
              \Exists m \in M \SuchThat p \semiOccursIn_m p'. %\qedhere
            \end{equation*}
    \end{enumerate}
  \end{definition}

  \begin{remark} % TODO Ähnliche Bemerkung für alle anderen Begrifflichkeiten?
  \label{rem:groups:occurs}
    Let $\mathcal{R}$ be the cell space $\ntuple{\ntuple{G, G, \cdot}, \ntuple{e_G, \family{g}_{g \in G}}}$, where $G$ is a group, $\cdot$ is its operation, and $e_G$ is its neutral element. Then, $G_0 = \setOf{e_G}$, $\isSemiActedUponBy = \cdot$, $\actsByItsCoordinateOn = \actsOnMap$, and, for each element $g \in G$, we have $H_{e_G, g} = \setOf{g}$. Hence, the notions \emph{occurs} and \emph{semi-occurs} are identical, and they are the common notion of \emph{occurrence} as used in \cite{fiorenzi:2003}.
  \end{remark}

  Semi-occurrence can be characterised in many ways, each illuminating a different aspect, some of which are given in

  \begin{remark}
  \label{rem:semi-occurs-at}
    Let $p$ be an $A$-pattern, let $p'$ be an $A'$-pattern, and let $m$ be an element of $M$. The following statements are equivalent:
    \begin{enumerate}
      \item $\displaystyle p \semiOccursIn_m p'$; % The pattern $p$ semi-occurs at $m$ in $c$;
      \item $\displaystyle \Exists h_0 \in H_0 \SuchThat h_0 \actsOnMap p \occursIn_m p'$;
      \item $\displaystyle \Exists h_0 \in H_0 \SuchThat g_{m_0, m} h_0 \actsOnPoint A \subseteq A' \text{ and } g_{m_0, m} h_0 \actsOnMap p = p'\restrictedTo_{g_{m_0, m} h_0 \actsOnPoint A}$;
      \item $\displaystyle \Exists h \in H_{m, m_0} \SuchThat A \subseteq h \actsOnPoint A' \text{ and } (h \actsOnMap p')\restrictedTo_A = p$;
      \item $\displaystyle \Exists h_0 \in H_0 \SuchThat A \subseteq h_0 g_{m_0, m}^{-1} \actsOnPoint A' \text{ and } (h_0 g_{m_0, m}^{-1} \actsOnMap p')\restrictedTo_A = p$. %\qedhere
    \end{enumerate}
  \end{remark}

  \begin{remark}
  \label{rem:semi-occurs}
    Let $p$ be an $A$-pattern and let $p'$ be an $A'$-pattern. The following statements are equivalent:
    \begin{enumerate}
      \item $\displaystyle p \semiOccursIn p'$; % The pattern $p$ semi-occurs in $c$;
      \item $\displaystyle \Exists h_0 \in H_0 \SuchThat h_0 \actsOnMap p \occursIn p'$;
      \item $\displaystyle \Exists h \in H \SuchThat h \actsOnPoint A \subseteq A' \text{ and } h \actsOnMap p = p'\restrictedTo_{h \actsOnPoint A}$;
      \item $\displaystyle \Exists h \in H \SuchThat A \subseteq h \actsOnPoint A' \text{ and } (h \actsOnMap p')\restrictedTo_A = p$. %\qedhere
    \end{enumerate}
  \end{remark}

  %\begin{definition}
  %  $Q^* = \bigcup_{k \in \N_0} Q^k$ \graffito{$Q^*$} % Q^* = \setOf{c \in Q^\infty \suchThat \absoluteValueOf{c} \in \N_0}
  %
  %  $Q^\omega = \setOf{c \in Q^\infty \suchThat \absoluteValueOf{c} = \aleph_0}$ \graffito{$Q^\omega$} % equivalent: \absoluteValueOf{c} \notin \N_0, da $M$ abzählbar ist
  %\end{definition}

  %\begin{remark} %  Folgendes ist leider nicht korrekt, da in $Q^*$ nur Blöcke sind und diese einen im Graphen zusammenhängenden Definitionsbereich haben ... siehe Kommentar bei der Definition von $Q^\infty$ ... Was wir eigentlich möchten ist $Q^*$ und $Q^\infty$ und $Q^\omega$ für Dinge mit zusammenhänendem Definitionsbereich und dasselbe nochmal für solche mit unzusammenhänendem Definitionsbereich (im Wesentlichen Blöcke versus Patterns)
  %  $Q^\infty = Q^* \cup Q^\omega$
  %\end{remark}

  A pattern is allowed in a subset of the full shift if it semi-occurs in one of its points and forbidden otherwise, as defined in

  \begin{definition}
  \label{def:allowed-forbidden}
    Let $X$ be a subset of $Q^M$ and let $p$ be an $A$-pattern. The pattern $p$ is called
    \begin{enumerate}
      \item \define{allowed in $X$}\graffito{pattern $p$ allowed in $X$} if and only if % it is not forbidden in $X$.
            \begin{equation*}
              \Exists x \in X \SuchThat p \semiOccursIn x;
            \end{equation*}
      \item \define{forbidden in $X$}\graffito{pattern $p$ forbidden in $X$} if and only if % it is not allowed in $X$.
            \begin{equation*}
              \ForEach x \in X \Holds p \not\semiOccursIn x. %\qedhere
            \end{equation*}
    \end{enumerate}
  %  Let $D$ be a subset of $Q^\infty$ and let $\mathfrak{f}$ be a pattern in $Q^\infty$. The pattern $\mathfrak{f}$ is called \define{forbidden in $D$}\graffito{forbidden pattern $\mathfrak{f}$ in $D$} if and only if, for each pattern $d \in D$, we have $\mathfrak{f} \not\semiOccursIn d$.
  \end{definition}

  The greatest subset of the full shift with respect to inclusion in which each block of a given set of blocks is forbidden is said to be generated by the set of blocks, as defined in

  \begin{definition}
  \label{def:generated-by-forbidden-blocks}
    Let $\mathfrak{F}$ be a subset of $Q^*$. The set
    \begin{equation*}
      \gen{\mathfrak{F}} = \setOf{c \in Q^M \suchThat \ForEach \mathfrak{f} \in \mathfrak{F} \Holds \mathfrak{f} \not\semiOccursIn c} \mathnote{set $\gen{\mathfrak{F}}$ generated by $\mathfrak{F}$}
    \end{equation*}
    is said to be \define{generated by $\mathfrak{F}$}. % and the set $\mathfrak{F}$ is said to \define{generate $\gen{\mathfrak{F}}$}\graffito{$\mathfrak{F}$ generates $\gen{\mathfrak{F}}$}.
  \end{definition}

%  \begin{remark}
%    The set $\gen{\mathfrak{F}}$ is the greatest subset of $Q^M$ with respect to inclusion in which each pattern of $\mathfrak{F}$ is forbidden.
%  \end{remark}

  A shift space is a subset of the full shift that is generated by a set of blocks, as defined in

  \begin{definition} % 1.2.1
  \label{def:shift-space}
    Let $X$ be a subset of $Q^M$. It is called \define{shift space}\graffito{shift space $X$} and \define{subshift of $Q^M$}\graffito{subshift $X$ of $Q^M$} if and only if there is a subset $\mathfrak{F}$ of $Q^*$ such that $\gen{\mathfrak{F}} = X$. % TADA "subshift" vs. "shift space" ... use subshift everywhere? No, because its cumbersome. Sometimes stating "of Q^M" is unnecessary.
  \end{definition}

  \begin{example}[Full Shift] % TODO confer \cite[Example~1.2.2]{lind:marcus:1995}} (only confer because in general M \neq \Z)
  \label{ex:shift:full}
    Because $\gen{\emptyset} = Q^M$, the set $Q^M$ is a shift space.
  \end{example}

  \begin{example}[Empty Shift] % TODO In the case M = \Z, this is the example in lind:marcus:1995 in the paragraph after Definition~1.2.1
  \label{ex:shift:empty}
    If we identify each $q \in Q$ with the $\ball(m_0)$-block $[m_0 \mapsto q]$, then $\gen{Q} = \emptyset$ and hence the set $\emptyset$ is a shift space.
  \end{example}

  \begin{example}[{Golden Mean Shift \cite[Example~1.2.3]{lind:marcus:1995}}]
  \label{ex:shift:golden-mean}
    In the situation of \cref{ex:full-shift-over-integers}, the set $X$ of all bi-infinite binary sequences with no two $1$'s next to each other is the shift space known as \define{golden mean shift}. It is for example generated by the forbidden block $11$. % TODO We should specify the domain of the block $11$ or at least introduce this kind of notation for blocks and say that the actual domain is irrelevant in this context.
  \end{example}

  \begin{example}[{Even Shift \cite[Example~1.2.4]{lind:marcus:1995}}]
  \label{ex:shift:even}
    In the situation of \cref{ex:full-shift-over-integers}, the set $X$ of all bi-infinite binary sequences such that, between any two occurrences of $1$'s, there are an even number of $0$'s, is the shift space known as \define{even shift}. It is for example generated by the forbidden blocks $1 0^{2k + 1} 1$, for $k \in \N_0$.
  \end{example}

  %\begin{remark}
  %  Let $Q$ be a finite set and let $\mathfrak{F}$ be a subset of $Q^*$. The quotient $G \reverseModulo Q^k$ is finite, because $Q$ and $\setOf{G \reverseModulo \setOf{M_0 \subseteq M \suchThat \absoluteValueOf{M_0} = k, \mathcal{G}[M_0] \text{ strongly connected}}}$ are finite. Hence, $G \reverseModulo \mathfrak{F}$ is at most countable.
  %\end{remark}

  Each shift space is shift-invariant, which is shown in

  \begin{lemma}
  \label{lem:shift-space-is-shift-invariant}
    Let $X$ be a subshift of $Q^M$. It is shift-invariant.
  \end{lemma}

  \begin{proof} % TODO Deswegen bei Definition \semiOccursIn und nicht \occursIn! (Da h^{-1} h' im Allgemeinen keine Koordinate ist, auch wenn sowohl h als auch h' es sind).
    There is a subset $\mathfrak{F}$ of $Q^*$ such that $\gen{\mathfrak{F}} = X$. Let $x \in X$ and let $h \in H$. Suppose that there is an $\mathfrak{f} \in \mathfrak{F}$ such that $\mathfrak{f} \semiOccursIn h \actsOnMap x$. Then, there is an $h' \in H$ such that $h' \actsOnMap \mathfrak{f} = (h \actsOnMap x)\restrictedTo_{h' \actsOnPoint \domainOf(\mathfrak{f})}$. Thus, because $(h \actsOnMap x)\restrictedTo_{h' \actsOnPoint \domainOf(\mathfrak{f})} = h \actsOnMap (x\restrictedTo_{h^{-1} h' \actsOnPoint \domainOf(\mathfrak{f})})$, we have $h^{-1} h' \actsOnMap \mathfrak{f} = x\restrictedTo_{h^{-1} h' \actsOnPoint \domainOf(\mathfrak{f})}$. Hence, because $h^{-1} h' \in H$, we have $\mathfrak{f} \semiOccursIn x$, which contradicts that $x \in \gen{\mathfrak{F}}$. Therefore, contrary to the supposition, for each $\mathfrak{f} \in \mathfrak{F}$, we have $\mathfrak{f} \not\semiOccursIn h \actsOnMap x$. Hence, $h \actsOnMap x \in X$. Therefore, $h \actsOnMap X \subseteq X$. In conclusion, according to \cref{rem:subseteq-sufficient-for-shift-invariance}, the subshift $X$ is shift-invariant.
  \end{proof}

  \begin{example}[{Shift-Invariant Non-Shift \cite[Example~1.2.10]{lind:marcus:1995}}]
  \label{ex:shift-invariant-non-shift}
    In the situation of \cref{ex:full-shift-over-integers}, the set $X$ of all bi-infinite binary sequences in which the symbol $1$ occurs exactly once is shift-invariant, but it is not a shift space.
  \end{example}

  Patterns with the same domain can all be restricted to some subdomain, as done in

  \begin{definition}
    Let $A$ be a subset of $M$, let $B$ be a subset of $A$, and let $P$ be a subset of $Q^A$. The set
    \begin{equation*}
      P_B = \setOf{p\restrictedTo_B \suchThat p \in P} \mathnote{$P_B$} % \pi_B(P), P\restrictedTo_B
    \end{equation*}
    is the set of all $B$-subpatterns of patterns of $P$.
  \end{definition}

  Because subshifts are shift-invariant, restrictions to patterns behave nicely with translations and rotations, as remarked in

  \begin{remark}
  \label{rem:pattern-belongs-to-shift-if-and-only-if-translated-pattern-belongs}
    Let $X$ be a subshift of $Q^M$, let $A$ be a subset of $M$, let $h$ be an element of $H$, and let $m$ be an element of $M$. For each $A$-pattern $p$, we have $p \in X_A$ if and only if $h \actsOnMap p \in X_{h \actsOnPoint A}$, and $p \in X_A$ if and only if $m \actsByItsCoordinateOn p \in X_{m \isSemiActedUponBy A}$. In particular, $h \actsOnMap X_A = X_{h \actsOnPoint A}$ and $m \actsByItsCoordinateOn X_A = X_{m \isSemiActedUponBy A}$. And, if $A$ is finite, then $\absoluteValueOf{X_A} = \absoluteValueOf{X_{h \actsOnPoint A}}$ and $\absoluteValueOf{X_A} = \absoluteValueOf{X_{m \isSemiActedUponBy A}}$. % TODO proof using shift-invariance of X ... liberation part follows by choosing g = g_{m_0, m}
  %  TODO $G \actsOnMap p$ can be regarded as a pattern that is not fixed in space
  \end{remark}

  Because shift spaces are generated by forbidden blocks, which have finite domains, a point of the full shift belongs to a shift space if and only if its subpatterns do, which is shown in % TODO We could consider all subpatterns, but in the lemma we only consider balls of increasing size

  \begin{lemma}
  \label{lem:if-restrictions-to-balls-are-in-shift-then-so-is-pattern}
    Let $X$ be a subshift of $Q^M$ and let $c$ be a point of $Q^M$. Then, $c \in X$ if and only if % The point $c$ is of $X$ if and only if for each non-negative integer $\rho$, the restriction $c\restrictedTo_{\ball(\rho)}$ is in $X_{\ball(\rho)}$.
    \begin{equation}
    \label{eq:if-restrictions-to-balls-are-in-shift-then-so-is-pattern}
      \ForEach \rho \in \N_0 \Holds c\restrictedTo_{\ball(\rho)} \in X_{\ball(\rho)}. %\qedhere % TODO %\qedhere does not work with numbered equations
    \end{equation}
  \end{lemma}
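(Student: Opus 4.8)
The plan is to unfold the definition of the subshift as generated by a set of forbidden blocks and reduce everything to the observation that semi-occurrence of a block depends on only finitely many cells.

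First, the forward implication is immediate: if $c \in X$, then, by the very definition of $X_{\ball(\rho)}$ as the set of $\ball(\rho)$-subpatterns of points of $X$, the restriction $c\restrictedTo_{\ball(\rho)}$ lies in $X_{\ball(\rho)}$ for every $\rho \in \N_0$. So only the converse carries content, and I would prove it by contraposition. Fix a generating set $\mathfrak{F} \subseteq Q^*$ with $\gen{\mathfrak{F}} = X$, which exists because $X$ is a subshift (\cref{def:shift-space}). Assume $c \notin X = \gen{\mathfrak{F}}$; then, by \cref{def:generated-by-forbidden-blocks}, some forbidden block $\mathfrak{f} \in \mathfrak{F}$ satisfies $\mathfrak{f} \semiOccursIn c$. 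Using the characterisation of semi-occurrence in \cref{rem:semi-occurs}, there is an element $h \in H$ with $h \actsOnMap \mathfrak{f} = c\restrictedTo_{h \actsOnPoint \domainOf(\mathfrak{f})}$.

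The crucial step is then to choose the radius: because $\mathfrak{f}$ is a block, its domain $\domainOf(\mathfrak{f})$ is finite, whence $h \actsOnPoint \domainOf(\mathfrak{f})$ is a finite subset of $M$ and is therefore contained in some ball $\ball(\rho)$. Now I would invoke the hypothesis for exactly this $\rho$: there is a point $x \in X$ with $x\restrictedTo_{\ball(\rho)} = c\restrictedTo_{\ball(\rho)}$. Restricting this equality further to the smaller set $h \actsOnPoint \domainOf(\mathfrak{f}) \subseteq \ball(\rho)$ gives $x\restrictedTo_{h \actsOnPoint \domainOf(\mathfrak{f})} = c\restrictedTo_{h \actsOnPoint \domainOf(\mathfrak{f})} = h \actsOnMap \mathfrak{f}$, so, again by \cref{rem:semi-occurs} with the same $h$, the block $\mathfrak{f}$ semi-occurs in $x$ as well. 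This contradicts $x \in \gen{\mathfrak{F}}$, and the contradiction shows $c \in X$, completing the argument.

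I do not expect a genuine obstacle here; the proof is essentially a compactness-free finiteness argument. The only point requiring care is to transfer the semi-occurrence witnessed in $c$ to the approximating point $x$: one must pick $\rho$ large enough that the finite set $h \actsOnPoint \domainOf(\mathfrak{f})$ — rather than $\domainOf(\mathfrak{f})$ itself — is swallowed by $\ball(\rho)$, and then use that $x$ and $c$ agree on that whole set. Finiteness of $\domainOf(\mathfrak{f})$ (i.e. that forbidden patterns are blocks) is what makes a single ball suffice, and this is the feature that fails for general shift-invariant sets, as \cref{ex:shift-invariant-non-shift} illustrates.
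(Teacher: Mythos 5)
Your proposal is correct and follows essentially the same route as the paper: both reduce the converse to the finiteness of a forbidden block's domain, choose $\rho$ so that the translated domain $h \actsOnPoint \domainOf(\mathfrak{f})$ lies in $\ball(\rho)$, and transfer the semi-occurrence to a point $x \in X$ agreeing with $c$ on that ball to reach a contradiction. The paper merely compresses your last step into the phrase that $\mathfrak{f}$ semi-occurs in $c\restrictedTo_{\ball(\rho)}$ and hence $c\restrictedTo_{\ball(\rho)} \notin X_{\ball(\rho)}$, citing an external remark for the existence of the enclosing ball, which you justify directly.
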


  \begin{proof}
    If $c \in X$, then \cref{eq:if-restrictions-to-balls-are-in-shift-then-so-is-pattern} holds. From now on, let \cref{eq:if-restrictions-to-balls-are-in-shift-then-so-is-pattern} hold. Because $X$ is a subshift, there is a subset $\mathfrak{F}$ of $Q^*$ such that $\gen{\mathfrak{F}} = X$. Let $\mathfrak{f} \in \mathfrak{F}$. Suppose that $\mathfrak{f}$ semi-occurs in $c$. Then, because $\absoluteValueOf{\mathfrak{f}} < \infty$, according to \cite[Remark~5.6]{wacker:growth:2017}, there is a $\rho \in \N_0$ such that $\mathfrak{f}$ semi-occurs in $c\restrictedTo_{\ball(\rho)}$. Hence, $c\restrictedTo_{\ball(\rho)} \notin X_{\ball(\rho)}$, which contradicts \cref{eq:if-restrictions-to-balls-are-in-shift-then-so-is-pattern}. Therefore, $\mathfrak{f}$ does not semi-occur in $c$. In conclusion, $c \in X$.
  \end{proof}

  %A shift space is shift-invariant and, according to the previous lemma, closed, which is equivalent to compactness here. Conversely, a shift-invariant and compact subset of the full shift is a shift space. Both statements are shown in
  Shift spaces are characterised by shift-invariance and compactness, which is shown in

  \begin{lemma} % Theorem 6.1.21 in "An Introduction to Symbolic Dynamics and Coding"
  \label{lem:shift-space-iff-invariant-and-compact}
    Let $X$ be a subset of $Q^M$. It is a shift space if and only if it is shift-invariant and compact.
  \end{lemma}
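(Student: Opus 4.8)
The statement is an equivalence, so I would prove the two implications separately, taking $\mathfrak{F}$ to be the set of \emph{all} blocks forbidden in $X$ for the harder converse.

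\emph{Shift space $\implies$ shift-invariant and compact.} Shift-invariance is exactly \cref{lem:shift-space-is-shift-invariant}. For compactness, recall that since $Q$ is finite the full shift $Q^M$ is compact in the prodiscrete topology by Tychonoff's theorem, so it suffices to show that $X$ is closed. By \cref{lem:if-restrictions-to-balls-are-in-shift-then-so-is-pattern} we have
\[
  X = \bigcap_{\rho \in \N_0} \setOf{c \in Q^M \suchThat c\restrictedTo_{\ball(\rho)} \in X_{\ball(\rho)}}.
\]
Each ball $\ball(\rho)$ is finite, so each set on the right is the preimage of a subset of the finite discrete space $Q^{\ball(\rho)}$ under the projection $\pi_{\ball(\rho)}$ and is therefore clopen; hence $X$, being an intersection of closed sets, is closed and thus compact.

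\emph{Shift-invariant and compact $\implies$ shift space.} Let $\mathfrak{F}$ be the set of all blocks forbidden in $X$. By definition no $\mathfrak{f} \in \mathfrak{F}$ semi-occurs in any point of $X$, so $X \subseteq \gen{\mathfrak{F}}$ immediately. For the reverse inclusion I would argue by contraposition: given $c \notin X$, I produce a forbidden block that semi-occurs in $c$, so that $c \notin \gen{\mathfrak{F}}$. Since $X$ is compact, hence closed, and the cylinders form a base of the prodiscrete topology, there is a finite subset $F \subseteq M$ with $c\restrictedTo_F \notin X_F$; enlarging $F$ to a ball $\ball(\rho) \supseteq F$ (possible because $\mathcal{R}$ is finitely right generated, so every finite subset lies in some ball) gives $c\restrictedTo_{\ball(\rho)} \notin X_{\ball(\rho)}$.

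The crux is to turn this into a statement about forbidden blocks, and here shift-invariance enters. I would first establish, for a shift-invariant $X$, that a block $\mathfrak{f}$ with domain $\ball(\rho)$ is allowed in $X$ if and only if $\mathfrak{f} \in X_{\ball(\rho)}$: if $\mathfrak{f}$ semi-occurs in some $x \in X$, then by \cref{rem:semi-occurs} there is an $h \in H$ with $h \actsOnMap \mathfrak{f} = x\restrictedTo_{h \actsOnPoint \ball(\rho)}$, whence applying $h^{-1} \actsOnMap \blank$ and using $h^{-1} \actsOnMap x \in X$ yields $\mathfrak{f} = (h^{-1} \actsOnMap x)\restrictedTo_{\ball(\rho)} \in X_{\ball(\rho)}$; conversely, if $\mathfrak{f} = x\restrictedTo_{\ball(\rho)}$ for some $x \in X$, then the choice $h = e_G$ in \cref{rem:semi-occurs} shows that $\mathfrak{f}$ semi-occurs in $x$, so $\mathfrak{f}$ is allowed. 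Consequently $c\restrictedTo_{\ball(\rho)} \notin X_{\ball(\rho)}$ means precisely that the block $\mathfrak{f} = c\restrictedTo_{\ball(\rho)}$ is forbidden, that is $\mathfrak{f} \in \mathfrak{F}$; and $\mathfrak{f}$ trivially semi-occurs in $c$ (again take $h = e_G$). This contradicts $c \in \gen{\mathfrak{F}}$, so $\gen{\mathfrak{F}} \subseteq X$ and hence $\gen{\mathfrak{F}} = X$, exhibiting $X$ as a shift space.

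The main obstacle is this last step: the interplay of compactness, which only furnishes a finite coordinate window, with the semi-occurrence relation, whose definition mixes the $H$-translations with the $H_0$-rotations of the stabiliser. Getting the equivalence ``allowed $\iff$ lies in $X_{\ball(\rho)}$'' right is where shift-invariance and the finiteness of $G_0$ must both be used carefully; it is essentially the shift-invariant refinement of \cref{rem:pattern-belongs-to-shift-if-and-only-if-translated-pattern-belongs}, and everything else is routine point-set topology once the balls $\ball(\rho)$ are known to be finite.
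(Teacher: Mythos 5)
Your proof is correct and follows essentially the same route as the paper: the forward direction rests on \cref{lem:shift-space-is-shift-invariant}, \cref{lem:if-restrictions-to-balls-are-in-shift-then-so-is-pattern} and compactness of $Q^M$, and the converse uses closedness to extract a cylinder witness $c\restrictedTo_{\ball(\rho)} \notin X_{\ball(\rho)}$ and shift-invariance to translate semi-occurrences back to $m_0$. The only (harmless) difference is your choice of generating set: you take $\mathfrak{F}$ to be all forbidden blocks, making $X \subseteq \gen{\mathfrak{F}}$ trivial and putting the shift-invariance argument into $\gen{\mathfrak{F}} \subseteq X$, whereas the paper takes $\mathfrak{F}$ to be the cylinder witnesses themselves and uses shift-invariance in the inclusion $X \subseteq \gen{\mathfrak{F}}$ instead.
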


  \begin{proof}
    First, let $X$ be a shift space. Then, according to \cref{lem:shift-space-is-shift-invariant}, it is shift-invariant. Moreover, let $\sequence{x_k}_{k \in \N_+}$ be a sequence in $X$ that converges to a point $c \in Q^M$. Then, for each $\rho \in \N_0$, there is a $k \in \N_+$ such that $c\restrictedTo_{\ball(\rho)} = x_k\restrictedTo_{\ball(\rho)} \in X_{\ball(\rho)}$. % TODO Begründung?
    Thus, according to \cref{lem:if-restrictions-to-balls-are-in-shift-then-so-is-pattern}, we have $c \in X$. Hence, $X$ is closed. And, according to \cite[the first paragraph in Section~1.8]{ceccherini-silberstein:coornaert:2010}, the set $Q^M$ is compact. Therefore, $X$ is compact.

    % TODO define \cylinder (confer chapter about the prodiscrete topology)
    Secondly, let $X$ be shift-invariant and compact. Then, $X$ is closed and $Q^M \smallsetminus X$ is open. Hence, for each $c \in Q^M \smallsetminus X$, there is a $\rho_c \in \N_0$ such that $\cylinder(c\restrictedTo_{\ball(\rho_c)}) \subseteq Q^M \smallsetminus X$. Put $\mathfrak{F} = \setOf{c\restrictedTo_{\ball(\rho_c)} \suchThat c \in Q^M \smallsetminus X}$.

    Let $x \in X$. Suppose that there is an $\mathfrak{f} \in \mathfrak{F}$ such that $\mathfrak{f} \semiOccursIn x$. Then, according to \cref{rem:semi-occurs}, there is an $h \in H$ such that $(h \actsOnMap x)\restrictedTo_{\domainOf(\mathfrak{f})} = \mathfrak{f}$. Thus, $h \actsOnMap x \in \cylinder(\mathfrak{f}) \subseteq Q^M \smallsetminus X$. And, because $X$ is shift-invariant, we also have $h \actsOnMap x \in X$, which contradicts that $h \actsOnMap x \in Q^M \smallsetminus X$. Hence, contrary to the supposition, for each $\mathfrak{f} \in \mathfrak{F}$ we have $\mathfrak{f} \not\semiOccursIn x$. Therefore, $X \subseteq \gen{\mathfrak{F}}$.

    Let $c \in \gen{\mathfrak{F}}$. Suppose that $c \notin X$. Then, $c\restrictedTo_{\ball(\rho_c)} \in \mathfrak{F}$. Thus $c \notin \gen{\mathfrak{F}}$, which contradicts that $c \in \gen{\mathfrak{F}}$. Hence, $c \in X$. Therefore, $\gen{\mathfrak{F}} \subseteq X$.

    Altogether, $\gen{\mathfrak{F}} = X$. In conclusion, $X$ is a shift space.
  \end{proof}

  \begin{remark}
    Compactness cannot be omitted in the equivalence. For example, the subset of $\setOf{0, 1}^\Z$ from \cref{ex:shift-invariant-non-shift} is shift-invariant but not a shift space.
  \end{remark}

  A shift space of finite type is one that is generated by finitely many forbidden blocks, as defined in

  \begin{definition} % Paper 2.9 % Book 2.1.1
  \label{def:of-finite-type}
    Let $X$ be a subshift of $Q^M$. It is said to be \define{of finite type}\graffito{of finite type} if and only if there is a finite subset $\mathfrak{F}$ of $Q^*$ such that $\gen{\mathfrak{F}} = X$.
  \end{definition}

  \begin{example}[Of Finite Type or Not]
  \label{ex:of-finite-type-or-not}
    As is apparent from their definition, the full shift (\cref{ex:shift:full}), the empty shift (\cref{ex:shift:empty}), and the golden mean shift (\cref{ex:shift:golden-mean}) are of finite type. According to \cite[Example~2.1.5]{lind:marcus:1995}, the even shift (\cref{ex:shift:even}) is \emph{not} of finite type.
  \end{example}

  A $\kappa$-step shift space is one that is generated by forbidden $\ball(\kappa)$-blocks, as defined in

  \begin{definition} % TODO? Maybe use "of \kappa-finite type" instead of "\kappa-step"? And use this to define "of finite type"
  \label{def:step-and-memory}
    Let $X$ be a subshift of $Q^M$ and let $\kappa$ be a non-negative integer. The subshift $X$ is called \define{$\kappa$-step}\graffito{$\kappa$-step} and the integer $\kappa$ is called \define{memory of $X$}\graffito{memory $\kappa$ of $X$} if and only if there is a subset $\mathfrak{F}$ of $Q^{\ball(\kappa)}$ such that $\gen{\mathfrak{F}} = X$.
  \end{definition}

  \begin{remark}
  \label{rem:k-step-for-greater-k}
    Let $X$ be a $\kappa$-step subshift of $Q^M$. Because the set $Q^{\ball(\kappa)}$ is finite, the subshift $X$ is of finite type. And, for each non-negative integer $\kappa'$ such that $\kappa' \geq \kappa$, the subshift $X$ is $\kappa'$-step. % TADA proof?
  \end{remark}

  A shift space of finite type is $\kappa$-step, where $\kappa$ is the radius of a ball that includes all domains of a finite generating set of the shift space, which is shown in

  \begin{lemma}
  \label{lem:of-finite-type-implies-step}
    Let $X$ be a subshift of $Q^M$ of finite type. There is a non-negative integer $\kappa$ such that $X$ is $\kappa$-step.
  \end{lemma}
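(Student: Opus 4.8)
The plan is to replace the finite generating set $\mathfrak{F}$ by a set of blocks whose domains all equal $\ball(\kappa)$ for a single radius $\kappa$, without altering the generated subshift. By \cref{def:of-finite-type} choose a finite $\mathfrak{F} \subseteq Q^*$ with $\gen{\mathfrak{F}} = X$. Since $H$ is $\mathcal{K}$-big, every coordinate $g_{m_0, m}$ lies in $H$, so $H$ acts transitively on $M$. Hence, for each $\mathfrak{f} \in \mathfrak{F}$ with domain $A = \domainOf(\mathfrak{f})$, fixing some $a \in A$ and setting $h_\mathfrak{f} = g_{m_0, a}^{-1} \in H$ gives $h_\mathfrak{f} \actsOnPoint a = m_0$, so the finite set $h_\mathfrak{f} \actsOnPoint A$ contains $m_0$. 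I would put $\mathfrak{G} = \setOf{h_\mathfrak{f} \actsOnMap \mathfrak{f} \suchThat \mathfrak{f} \in \mathfrak{F}}$ and $\kappa = \max_{\mathfrak{g} \in \mathfrak{G}} \max_{m \in \domainOf(\mathfrak{g})} \absoluteValueOf{m}$, a well-defined non-negative integer because $\mathfrak{G}$ and each domain are finite; then $\domainOf(\mathfrak{g}) \subseteq \ball(\kappa)$ for every $\mathfrak{g} \in \mathfrak{G}$.

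The computational backbone is the transitivity of $\semiOccursIn$: if $p \semiOccursIn p'$ and $p' \semiOccursIn p''$, then, using the witnesses $h, h' \in H$ of \cref{rem:semi-occurs}, the element $h' h \in H$ witnesses $p \semiOccursIn p''$, since $h'h \actsOnPoint \domainOf(p) \subseteq h' \actsOnPoint \domainOf(p') \subseteq \domainOf(p'')$ and $(h'h) \actsOnMap p = h' \actsOnMap \parens[\big]{p'\restrictedTo_{h \actsOnPoint \domainOf(p)}} = p''\restrictedTo_{h'h \actsOnPoint \domainOf(p)}$. I would record this once as a short preliminary computation. Because each $\mathfrak{g} = h_\mathfrak{f} \actsOnMap \mathfrak{f}$ satisfies both $\mathfrak{f} \semiOccursIn \mathfrak{g}$ (witness $h_\mathfrak{f}$) and $\mathfrak{g} \semiOccursIn \mathfrak{f}$ (witness $h_\mathfrak{f}^{-1}$), transitivity yields $\mathfrak{f} \semiOccursIn c \ifAndOnlyIf \mathfrak{g} \semiOccursIn c$ for every $c \in Q^M$; hence $\gen{\mathfrak{G}} = \gen{\mathfrak{F}} = X$.

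Next I would set
\begin{equation*}
  \mathfrak{F}' = \setOf{b \in Q^{\ball(\kappa)} \suchThat \Exists \mathfrak{g} \in \mathfrak{G} \SuchThat \mathfrak{g} \semiOccursIn b},
\end{equation*}
a subset of $Q^{\ball(\kappa)}$, and show $\gen{\mathfrak{F}'} = \gen{\mathfrak{G}}$; by \cref{def:step-and-memory} this gives that $X$ is $\kappa$-step. For $\gen{\mathfrak{G}} \subseteq \gen{\mathfrak{F}'}$: if $c \in \gen{\mathfrak{G}}$ and some $b \in \mathfrak{F}'$ semi-occurred in $c$, pick $\mathfrak{g} \in \mathfrak{G}$ with $\mathfrak{g} \semiOccursIn b$; transitivity then forces $\mathfrak{g} \semiOccursIn c$, contradicting $c \in \gen{\mathfrak{G}}$. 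For $\gen{\mathfrak{F}'} \subseteq \gen{\mathfrak{G}}$: if $c \in \gen{\mathfrak{F}'}$ and some $\mathfrak{g} \in \mathfrak{G}$, with domain $B \subseteq \ball(\kappa)$, semi-occurred in $c$ via $h' \in H$ (so $h' \actsOnMap \mathfrak{g} = c\restrictedTo_{h' \actsOnPoint B}$), I would form the $\ball(\kappa)$-block $b = h'^{-1} \actsOnMap \parens[\big]{c\restrictedTo_{h' \actsOnPoint \ball(\kappa)}}$. Its domain is $h'^{-1} \actsOnPoint (h' \actsOnPoint \ball(\kappa)) = \ball(\kappa)$ and a direct computation gives $b\restrictedTo_B = \mathfrak{g}$, so $\mathfrak{g} \semiOccursIn b$ (witness the identity of $H$) and thus $b \in \mathfrak{F}'$, while $h' \actsOnMap b = c\restrictedTo_{h' \actsOnPoint \ball(\kappa)}$ shows $b \semiOccursIn c$ by \cref{rem:semi-occurs}, contradicting $c \in \gen{\mathfrak{F}'}$. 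The two inclusions give $\gen{\mathfrak{F}'} = X$ with $\mathfrak{F}' \subseteq Q^{\ball(\kappa)}$.

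The main obstacle is organisational rather than conceptual: unlike the classical group case, where occurrence is governed by translations alone, here semi-occurrence intertwines translations by $H$ with rotations by the finite stabiliser, so I must track each witness $h \in H$ explicitly and verify that the centering step $\mathfrak{f} \mapsto h_\mathfrak{f} \actsOnMap \mathfrak{f}$ leaves $\gen{\mathfrak{F}}$ unchanged. Establishing transitivity of $\semiOccursIn$ once, and confirming that the extracted block $b$ really has domain $\ball(\kappa)$ and restricts to $\mathfrak{g}$ on $B$, are the only delicate points; the remaining verifications are routine manipulations of restrictions and of the $\actsOnMap$-action.
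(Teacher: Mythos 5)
Your proof is correct and takes essentially the same route as the paper: fix $\kappa$ with every forbidden block's domain inside $\ball(\kappa)$, then replace each forbidden block by the set of all $\ball(\kappa)$-blocks in which it (semi-)occurs, checking that this leaves the generated subshift unchanged. The only real difference is that your preliminary centering step $\mathfrak{f} \mapsto h_{\mathfrak{f}} \actsOnMap \mathfrak{f}$ is unnecessary — any finite subset of $M$ already lies in $\ball(\kappa)$ for $\kappa$ the maximum of the $S$-lengths of its elements, whether or not it contains $m_0$ — so the paper passes directly from $\mathfrak{F}$ to $\mathfrak{F}'$ and leaves the routine verification of $\gen{\mathfrak{F}'} = \gen{\mathfrak{F}}$ implicit.
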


  \begin{proof}
    Because $X$ is of finite type, there is a finite subset $\mathfrak{F}$ of $Q^*$ such that $\gen{\mathfrak{F}} = X$. And, because the set $\mathfrak{F}$ is finite, according to \cite[Remark~5.6]{wacker:growth:2017}, there is a non-negative integer $\kappa$ such that, for each $\mathfrak{f} \in \mathfrak{F}$, we have $\domainOf(\mathfrak{f}) \subseteq \ball(\kappa)$. Let $\mathfrak{F}'$ be the set $\setOf{p \in Q^{\ball(\kappa)} \suchThat \Exists \mathfrak{f} \in \mathfrak{F} \SuchThat p\restrictedTo_{\domainOf(\mathfrak{f})} = \mathfrak{f}}$. Then, $\gen{\mathfrak{F}'} = \gen{\mathfrak{F}} = X$. In conclusion, $X$ is $\kappa$-step. % TODO Die Gleichheit $\gen{\mathfrak{F}'} = \gen{\mathfrak{F}}$ begründen.
  \end{proof}

  A shift space is $\kappa$-step
  %, that is, it is generated by forbidden patterns whose domains is the ball of radius $\kappa$,
  if and only if it contains each point of the full shift whose restrictions to the balls of radius $\kappa$ are allowed patterns, which is shown in

  \begin{lemma} % Paper 4.9
  \label{lem:characterisation-of-kappa-step-subshifts}
    Let $X$ be a subshift of $Q^M$ and let $\kappa$ be a non-negative integer. The subshift $X$ is $\kappa$-step if and only if %, for each point $c \in Q^M$,
    \begin{equation}
    \label{eq:characterisation-of-kappa-step-subshifts:char}
      \ForEach c \in Q^M \Holds \parens[\Big]{\parens[\big]{\ForEach m \in M \Holds c\restrictedTo_{\ball(m, \kappa)} \in X_{\ball(m, \kappa)}} \implies c \in X}. %\qedhere
    \end{equation}
  \end{lemma}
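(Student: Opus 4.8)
The plan is to prove the two implications by exploiting the two natural generating sets attached to the radius $\kappa$, namely the allowed and the forbidden $\ball(\kappa)$-blocks. Throughout I rely on one geometric fact: the $\actsOnPoint$-action of $H$ preserves the $S$-Cayley-graph structure and hence the $S$-metric, so that $h \actsOnPoint \ball(m', \rho) = \ball(h \actsOnPoint m', \rho)$ for all $h \in H$, $m' \in M$, and $\rho \in \N_0$; this follows from the semi-commutation of $\isSemiActedUponBy$ with $\actsOnPoint$ and is recorded in \cite{wacker:growth:2017}. It lets me rotate a $\ball(\kappa)$-block onto any ball $\ball(m, \kappa)$ and back. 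Besides this I only use the equivalent descriptions of semi-occurrence in \cref{rem:semi-occurs}, the transformation rule $h \actsOnMap X_A = X_{h \actsOnPoint A}$ of \cref{rem:pattern-belongs-to-shift-if-and-only-if-translated-pattern-belongs}, and shift-invariance (\cref{lem:shift-space-is-shift-invariant}).

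For the forward implication I would assume $X$ is $\kappa$-step, say $\gen{\mathfrak{F}} = X$ with $\mathfrak{F} \subseteq Q^{\ball(\kappa)}$, and take a $c \in Q^M$ all of whose restrictions $c\restrictedTo_{\ball(m, \kappa)}$ lie in $X_{\ball(m, \kappa)}$. To place $c$ in $\gen{\mathfrak{F}}$ I argue by contradiction: if some $\mathfrak{f} \in \mathfrak{F}$ semi-occurred in $c$, then by \cref{rem:semi-occurs} there would be an $h \in H$ with $h \actsOnMap \mathfrak{f} = c\restrictedTo_{h \actsOnPoint \ball(\kappa)}$; writing $m = h \actsOnPoint m_0$ and using the isometry fact gives $h \actsOnPoint \ball(\kappa) = \ball(m, \kappa)$, so $c\restrictedTo_{\ball(m, \kappa)} = h \actsOnMap \mathfrak{f}$. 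By hypothesis this pattern equals $x\restrictedTo_{\ball(m, \kappa)}$ for some $x \in X$, and reading the identity $h \actsOnMap \mathfrak{f} = x\restrictedTo_{h \actsOnPoint \ball(\kappa)}$ back through \cref{rem:semi-occurs} shows $\mathfrak{f} \semiOccursIn x$, contradicting $x \in X = \gen{\mathfrak{F}}$. Hence no block of $\mathfrak{F}$ semi-occurs in $c$, that is, $c \in X$.

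For the backward implication I would assume that \cref{eq:characterisation-of-kappa-step-subshifts:char} holds and take the natural forbidden set $\mathfrak{F} = Q^{\ball(\kappa)} \smallsetminus X_{\ball(\kappa)}$, which is finite because $\ball(\kappa)$ and $Q$ are finite, and show $\gen{\mathfrak{F}} = X$. The inclusion $X \subseteq \gen{\mathfrak{F}}$ uses shift-invariance: were some $\mathfrak{f} \in \mathfrak{F}$ to semi-occur in an $x \in X$, then by \cref{rem:semi-occurs} a shift $h \actsOnMap x$, still in $X$ by \cref{lem:shift-space-is-shift-invariant}, would restrict on $\ball(\kappa)$ to $\mathfrak{f}$, forcing $\mathfrak{f} \in X_{\ball(\kappa)}$, a contradiction. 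For $\gen{\mathfrak{F}} \subseteq X$ I take $c \in \gen{\mathfrak{F}}$ and verify the premise of \cref{eq:characterisation-of-kappa-step-subshifts:char}: if $c\restrictedTo_{\ball(m, \kappa)} \notin X_{\ball(m, \kappa)}$ for some $m$, I pick $h \in H$ with $h \actsOnPoint m = m_0$ (for instance $h = g_{m_0, m}^{-1}$), set $\mathfrak{f} = h \actsOnMap (c\restrictedTo_{\ball(m, \kappa)})$, note that its domain is $\ball(\kappa)$ and that \cref{rem:pattern-belongs-to-shift-if-and-only-if-translated-pattern-belongs} makes $\mathfrak{f} \notin X_{\ball(\kappa)}$, whence $\mathfrak{f} \in \mathfrak{F}$; applying $h^{-1}$ then exhibits $\mathfrak{f}$ as semi-occurring in $c$ via \cref{rem:semi-occurs}, contradicting $c \in \gen{\mathfrak{F}}$. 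Thus every $\ball(m, \kappa)$-restriction of $c$ is allowed, so \cref{eq:characterisation-of-kappa-step-subshifts:char} gives $c \in X$, and $X$ is $\kappa$-step.

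The routine part is the bookkeeping: juggling the several equivalent forms of semi-occurrence in \cref{rem:semi-occurs} and keeping straight which transporter $h$ (or its inverse) to feed into each. The one genuine ingredient, on which both inclusions rest, is the compatibility of the $\actsOnPoint$-action of $H$ with the $S$-metric, namely $h \actsOnPoint \ball(m', \rho) = \ball(h \actsOnPoint m', \rho)$. I expect this to be the only place where the geometry of the cell space really enters; once it is in hand, each direction reduces to a short contradiction argument moving a single $\ball(\kappa)$-block between $m_0$ and a general cell $m$.
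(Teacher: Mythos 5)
Your proposal is correct and follows essentially the same route as the paper: the forward direction is the identical contradiction argument transporting a forbidden $\ball(\kappa)$-block to $\ball(m,\kappa)$ via $h \actsOnPoint \ball(\kappa) = \ball(m,\kappa)$, and the backward direction uses the same candidate generating set $\mathfrak{F} = Q^{\ball(\kappa)} \smallsetminus X_{\ball(\kappa)}$ with the same two inclusions. The only differences are cosmetic — you phrase the second inclusion as a direct argument with an inner contradiction where the paper argues by contraposition, and you invoke shift-invariance directly where the paper routes it through \cref{rem:pattern-belongs-to-shift-if-and-only-if-translated-pattern-belongs}.
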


  \begin{proof}
    First, let $X$ be $\kappa$-step. Then, there is an $\mathfrak{F} \subseteq Q^{\ball(\kappa)}$ such that $\gen{\mathfrak{F}} = X$. Furthermore, let $c \in Q^M$ such that
    \begin{equation}
    \label{eq:characterisation-of-kappa-step-subshifts:antedecent}
      \ForEach m \in M \Holds c\restrictedTo_{\ball(m, \kappa)} \in X_{\ball(m, \kappa)}.
    \end{equation}
    Suppose that there is an $\mathfrak{f} \in \mathfrak{F}$ such that $\mathfrak{f} \semiOccursIn c$. Then, there is an $m \in M$ such that $\mathfrak{f} \semiOccursIn_m c$. Thus, there is an $h \in H_{m_0, m}$ such that $h \actsOnMap \mathfrak{f} = c\restrictedTo_{h \actsOnPoint \ball(\kappa)}$. Moreover, because $h \actsOnPoint m_0 = m$, according to \cite[Lemma~5.9]{wacker:growth:2017}, we have $h \actsOnPoint \ball(\kappa) = \ball(m, \kappa)$. Hence, according to \cref{eq:characterisation-of-kappa-step-subshifts:antedecent}, we have $h \actsOnMap \mathfrak{f} = c\restrictedTo_{\ball(m, \kappa)} \in X_{\ball(m, \kappa)} = X_{h \actsOnPoint \ball(\kappa)}$. Therefore, there is an $x \in X$ such that $\mathfrak{f} \semiOccursIn x$, which contradicts that $\gen{\mathfrak{F}} = X$. In conclusion, for each $\mathfrak{f} \in \mathfrak{F}$, we have $\mathfrak{f} \not\semiOccursIn c$, and hence $c \in X$.

    Secondly, let \cref{eq:characterisation-of-kappa-step-subshifts:char} hold. Furthermore, let $\mathfrak{F} = Q^{\ball(\kappa)} \smallsetminus X_{\ball(\kappa)}$. We show below that $X \subseteq \gen{\mathfrak{F}}$ and $\gen{\mathfrak{F}} \subseteq X$. Hence, $\gen{\mathfrak{F}} = X$. In conclusion, $X$ is $\kappa$-step.

    \proofpart{Subproof of: $X \subseteq \gen{\mathfrak{F}}$}
    Let $c \in Q^M \smallsetminus \gen{\mathfrak{F}}$. Then, there is an $\mathfrak{f} \in \mathfrak{F}$ such that $\mathfrak{f} \semiOccursIn c$. Thus, there is an $m \in M$ and there is an $h \in H_{m_0, m}$ such that $h \actsOnMap \mathfrak{f} = c\restrictedTo_{\ball(m, \kappa)}$. Therefore, because $h \actsOnMap \blank$ is bijective, according to \cref{rem:pattern-belongs-to-shift-if-and-only-if-translated-pattern-belongs} and \cite[Lemma~5.9]{wacker:growth:2017}, we have $c\restrictedTo_{\ball(m, \kappa)} \in h \actsOnMap \mathfrak{F} = h \actsOnMap (Q^{\ball(\kappa)} \smallsetminus X_{\ball(\kappa)}) = (h \actsOnMap Q^{\ball(\kappa)}) \smallsetminus (h \actsOnMap X_{\ball(\kappa)}) = Q^{\ball(m, \kappa)} \smallsetminus X_{\ball(m, \kappa)}$. Hence, $c \in Q^M \smallsetminus X$. In conclusion, $X \subseteq \gen{\mathfrak{F}}$.

    \proofpart{Subproof of: $\gen{\mathfrak{F}} \subseteq X$}
    Let $c \in Q^M \smallsetminus X$. Then, according to \cref{eq:characterisation-of-kappa-step-subshifts:char}, there is an $m \in M$ such that $c\restrictedTo_{\ball(m, \kappa)} \in Q^{\ball(m, \kappa)} \smallsetminus X_{\ball(m, \kappa)}$. Furthermore, let $h = g_{m_0, m}$. Then, similar as above, $h^{-1} \actsOnMap c\restrictedTo_{\ball(m, \kappa)} \in Q^{\ball(\kappa)} \smallsetminus X_{\ball(\kappa)} = \mathfrak{F}$. Thus, there is an $\mathfrak{f} \in \mathfrak{F}$ such that $h \actsOnMap \mathfrak{f} = c\restrictedTo_{\ball(m, \kappa)}$. Hence, because $h \in H$, we have $\mathfrak{f} \semiOccursIn c$. Therefore, $c \in Q^M \smallsetminus \gen{\mathfrak{F}}$. In conclusion, $\gen{\mathfrak{F}} \subseteq X$.
  \end{proof} % TODO Unschön: "similar as above"

  If we cut holes in a point of a shift space of finite type that are far enough apart and fill these holes with pieces from other points of the shift space that agree on big enough boundaries of the holes with the holey point, then we still have a point of the shift space, which is shown in

  \begin{lemma} % Paper 2.10
  \label{lem:overlapping-global-configurations-can-be-glued}
    Let $X$ be a $\kappa$-step subshift of $Q^M$, let $x$ be a point of $X$, let $\family{A_i}_{i \in I}$ be a family of subsets of $M$ such that the family $\family{A_i^{+2\kappa}}_{i \in I}$ is pairwise disjoint, and let $\family{x_i}_{i \in I}$ be a family of points of $X$ such that, for each index $i \in I$, we have $x_i\restrictedTo_{\boundaryOf_{2\kappa}^+ A_i} = x\restrictedTo_{\boundaryOf_{2\kappa}^+ A_i}$. The map $x\restrictedTo_{M \smallsetminus (\bigcup_{i \in I} A_i)} \times \coprod_{i \in I} x_i\restrictedTo_{A_i}$ is identical to $x\restrictedTo_{M \smallsetminus (\bigcup_{i \in I} A_i^{+2\kappa})} \times \coprod_{i \in I} x_i\restrictedTo_{A_i^{+2\kappa}}$ and a point of $X$.
  \end{lemma}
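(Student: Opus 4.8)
The plan is to first establish the pointwise equality of the two glued maps and then deduce membership in $X$ from the characterisation of $\kappa$-step subshifts (\cref{lem:characterisation-of-kappa-step-subshifts}) applied to the common map. Denote the first map by $c$. First I would note that both glued maps are genuine points of $Q^M$: because the family $\family{A_i^{+2\kappa}}_{i \in I}$ is pairwise disjoint, so is $\family{A_i}_{i \in I}$, hence both coproducts are well-defined and, together with the respective restriction of $x$, have domain $M$.

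To see that $c$ equals the second map, I would compare them cell by cell. For a cell $m \in A_i$ both maps take the value $x_i(m)$; for a cell $m \notin \bigcup_{i \in I} A_i^{+2\kappa}$ both take the value $x(m)$; and for a cell $m \in A_i^{+2\kappa} \smallsetminus A_i = \boundaryOf_{2\kappa}^+ A_i$ the first map takes the value $x(m)$ (since $m$ lies in no $A_j$, by disjointness of the closures), while the second takes the value $x_i(m)$, and these coincide by the hypothesis $x_i\restrictedTo_{\boundaryOf_{2\kappa}^+ A_i} = x\restrictedTo_{\boundaryOf_{2\kappa}^+ A_i}$. Hence the two maps are identical.

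For membership in $X$, since $X$ is $\kappa$-step, by \cref{lem:characterisation-of-kappa-step-subshifts} it suffices to show that $c\restrictedTo_{\ball(m, \kappa)} \in X_{\ball(m, \kappa)}$ for every $m \in M$. The key geometric observation is that every ball of radius $\kappa$ lies entirely in a region on which $c$ agrees with a single point of $X$. Concretely, if $\ball(m, \kappa) \cap A_i = \emptyset$ for all $i$, then $c$ agrees with $x$ on $\ball(m, \kappa)$. Otherwise $\ball(m, \kappa) \cap A_i \neq \emptyset$ for some $i$: picking $p \in \ball(m, \kappa) \cap A_i$, the triangle inequality gives $d(m', p) \leq d(m', m) + d(m, p) \leq 2\kappa$ for every $m' \in \ball(m, \kappa)$, so $p \in \ball(m', 2\kappa) \cap A_i$ and hence $\ball(m, \kappa) \subseteq A_i^{+2\kappa}$. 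If $\ball(m, \kappa)$ also met some $A_j$ with $j \neq i$, the same argument would give $\ball(m, \kappa) \subseteq A_j^{+2\kappa}$, so $A_i^{+2\kappa} \cap A_j^{+2\kappa}$ would contain $m$, contradicting disjointness of the closures $\family{A_j^{+2\kappa}}_{j \in I}$; thus $i$ is unique and $\ball(m, \kappa)$ meets no other $A_j$.

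On this ball $c$ then agrees with $x_i$: for $m' \in \ball(m, \kappa) \cap A_i$ we have $c(m') = x_i(m')$ by definition, while for $m' \in \ball(m, \kappa) \smallsetminus A_i$ we have $m' \in A_i^{+2\kappa} \smallsetminus A_i = \boundaryOf_{2\kappa}^+ A_i$, so $c(m') = x(m') = x_i(m')$ by the boundary hypothesis. In either case $c\restrictedTo_{\ball(m, \kappa)}$ equals $x\restrictedTo_{\ball(m, \kappa)}$ or $x_i\restrictedTo_{\ball(m, \kappa)}$, both of which lie in $X_{\ball(m, \kappa)}$ because $x, x_i \in X$; applying \cref{lem:characterisation-of-kappa-step-subshifts} then yields $c \in X$. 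The main obstacle is the $2\kappa$ bookkeeping above: the inflation radius $2\kappa$ is exactly what forces a $\kappa$-ball touching $A_i$ to be absorbed into $A_i^{+2\kappa}$, and the disjointness of these inflations is exactly what prevents a single $\kappa$-ball from straddling two different holes, so that the memory condition can always be verified against one fixed point of $X$.
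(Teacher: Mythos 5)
Your proof is correct and follows essentially the same route as the paper's: the same well-definedness and pointwise-equality observations, and the same case split on whether a $\kappa$-ball meets some $A_i$ (equivalently, whether its centre lies in $A_i^{+\kappa}$), with the $2\kappa$-inflation absorbing such a ball into $A_i^{+2\kappa}$ so that the glued point agrees with a single point of $X$ on every $\kappa$-ball. The only cosmetic difference is that you verify the memory condition via \cref{lem:characterisation-of-kappa-step-subshifts}, whereas the paper unpacks the definition of $\kappa$-step directly and checks that no forbidden $\ball(\kappa)$-block semi-occurs in the glued point; the two are interchangeable here.
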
 % TODO define "coproduct" ... in particular of maps ... this is done in comment in my Garden of Eden paper

  \begin{proof}
    Because $\family{A_i^{+2\kappa}}_{i \in I}$ is pairwise disjoint, so is $\family{A_i}_{i \in I}$. Hence, $x' = x\restrictedTo_{M \smallsetminus (\bigcup_{i \in I} A_i)} \times \coprod_{i \in I} x_i\restrictedTo_{A_i}$ is well-defined. Furthermore, for each $i \in I$, we have $x_i\restrictedTo_{A_i^{+2\kappa} \smallsetminus A_i} = x\restrictedTo_{A_i^{+2\kappa} \smallsetminus A_i}$. Hence, $x' = x\restrictedTo_{M \smallsetminus (\bigcup_{i \in I} A_i^{+2\kappa})} \times \coprod_{i \in I} x_i\restrictedTo_{A_i^{+2\kappa}}$. Moreover, because $X$ is $\kappa$-step, there is an $\mathfrak{F} \subseteq Q^{\ball(\kappa)}$ such that $\gen{\mathfrak{F}} = X$.

    Let $u \in Q^{\ball(\kappa)}$ semi-occur in $x'$. Then, there is an $m \in M$ and there is an $h \in H_{m_0, m}$ such that $h \actsOnMap u = x'\restrictedTo_{h \actsOnPoint \ball(\kappa)}$. Moreover, according to \cite[Lemma~5.9]{wacker:growth:2017} and \cite[Item~2 of Lemma~6.2]{wacker:growth:2017}, we have $h \actsOnPoint \ball(\kappa) = \ball(m, \kappa) = \setOf{m}^{+\kappa}$.
    \begin{description} % TODO Reference for \setOf{m}^{+\kappa} \subseteq (A_i^{+\kappa})^{+\kappa} ... that is for A \subseteq A' \implies A^{+E} \subseteq (A')^{+E} (same is true for interior)
      \item[Case 1] $\Exists i \in I \SuchThat m \in A_i^{+\kappa}$. Then, according to \cite[Item~3 of Lemma~6.4]{wacker:growth:2017}, we have $\setOf{m}^{+\kappa} \subseteq (A_i^{+\kappa})^{+\kappa} \subseteq A_i^{+2\kappa}$. Hence, $h \actsOnPoint \ball(\kappa) \subseteq A_i^{+2\kappa}$. Thus, $x'\restrictedTo_{h \actsOnPoint \ball(\kappa)} = x_i\restrictedTo_{h \actsOnPoint \ball(\kappa)}$. Therefore, $u$ semi-occurs in $x_i$. Hence, $u \notin \mathfrak{F}$.
      \item[Case 2] $m \in M \smallsetminus (\bigcup_{i \in I} A_i^{+\kappa})$. Then, according to \cite[Item~3 of Lemma~1]{wacker:garden:2016} % TODO Dis: Add it:properties-of-interior-closure-and-boundary:union
      and \cite[Item~5 of Lemma~6.4]{wacker:growth:2017}, we have $\setOf{m}^{+\kappa} \subseteq (M \smallsetminus (\bigcup_{i \in I} A_i^{+\kappa}))^{+\kappa} \subseteq M \smallsetminus (\bigcup_{i \in I} (A_i^{+\kappa})^{-\kappa}) \subseteq M \smallsetminus (\bigcup_{i \in I} A_i^{+(\kappa - \kappa)}) = M \smallsetminus (\bigcup_{i \in I} A_i)$. Hence, $h \actsOnPoint \ball(\kappa) \subseteq M \smallsetminus (\bigcup_{i \in I} A_i)$. Thus, $x'\restrictedTo_{h \actsOnPoint \ball(\kappa)} = x\restrictedTo_{h \actsOnPoint \ball(\kappa)}$. Therefore, $u$ semi-occurs in $x$. Hence, $u \notin \mathfrak{F}$.
    \end{description}
    In either case, $u \notin \mathfrak{F}$. In conclusion, $x' \in X$.
  \end{proof}

  If we sew one part and respectively the other part of two sufficiently overlapping points of a shift space of finite type together, then we get a point of the shift space, which is shown in

  \begin{corollary} % Paper 2.10
  \label{cor:overlapping-global-configurations-can-be-glued}
    Let $X$ be a $\kappa$-step subshift of $Q^M$, let $A$ be a subset of $M$, and let $x$ and $x'$ be two points of $X$ such that $x\restrictedTo_{\boundaryOf_{2\kappa}^+ A} = x'\restrictedTo_{\boundaryOf_{2\kappa}^+ A}$. The map $x\restrictedTo_A \times x'\restrictedTo_{M \smallsetminus A}$ is identical to $x\restrictedTo_{A^{+2\kappa}} \times x'\restrictedTo_{M \smallsetminus A^{+2\kappa}}$ and a point of $X$.
  \end{corollary}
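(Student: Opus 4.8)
The plan is to derive this corollary directly from \cref{lem:overlapping-global-configurations-can-be-glued} by specialising the family $\family{A_i}_{i \in I}$ to a single set. Concretely, I would take the index set $I = \setOf{1}$ (a singleton), set $A_1 = A$, let the background point be $x'$, and let the single overlay point be $x_1 = x$. With only one member, the hypothesis that $\family{A_i^{+2\kappa}}_{i \in I}$ is pairwise disjoint is vacuously satisfied, so there is nothing to check there.

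First I would verify that the boundary-agreement hypothesis of the lemma is met. The lemma requires $x_i\restrictedTo_{\boundaryOf_{2\kappa}^+ A_i} = x\restrictedTo_{\boundaryOf_{2\kappa}^+ A_i}$ for the background point; after the relabelling (background $= x'$, overlay $= x$, $A_1 = A$) this reads $x\restrictedTo_{\boundaryOf_{2\kappa}^+ A} = x'\restrictedTo_{\boundaryOf_{2\kappa}^+ A}$, which is exactly the hypothesis of the corollary. Then the lemma's conclusion gives that
\begin{equation*}
  x'\restrictedTo_{M \smallsetminus A} \times x\restrictedTo_{A}
\end{equation*}
is identical to
\begin{equation*}
  x'\restrictedTo_{M \smallsetminus A^{+2\kappa}} \times x\restrictedTo_{A^{+2\kappa}}
\end{equation*}
and is a point of $X$. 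These are precisely the two maps named in the corollary (up to the order of the two factors in the partial-function join $\times$, which does not matter since the domains $A$ and $M \smallsetminus A$ are disjoint).

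The only genuinely substantive step is the bookkeeping of this relabelling: one must be careful that the roles of the two points are swapped relative to the lemma. In the lemma, $x$ plays the role of the ambient configuration that is kept away from the holes, whereas the overlays $x_i$ supply the values inside the sets $A_i$; in the corollary, $x$ is the one supplying values on $A$, so it must be cast as the single overlay $x_1$, and $x'$ must be cast as the ambient point. Since the boundary-agreement condition is symmetric in $x$ and $x'$ (equality of restrictions), this swap causes no difficulty. I expect no real obstacle here: the corollary is simply the one-hole instance of the lemma, and the proof is a two- or three-line invocation once the identifications are spelled out.
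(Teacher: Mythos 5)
Your proposal is correct and is essentially the paper's own proof: the paper likewise disposes of the corollary in one line as the singleton-family instance of \cref{lem:overlapping-global-configurations-can-be-glued}. Your explicit bookkeeping (ambient point $x'$, single set $A_1 = A$, single overlay $x_1 = x$) is in fact the instantiation that reproduces the corollary verbatim, including the hypothesis on $\boundaryOf_{2\kappa}^+ A$ and the second form with $A^{+2\kappa}$, so nothing is missing.
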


  \begin{proof}
    This is a direct consequence of \cref{lem:overlapping-global-configurations-can-be-glued} with $\family{x_i}_{i \in I} = \family{x'}$.
  \end{proof}

  If two patterns with the same domain, that are allowed in a shift space of finite type, agree on a big enough boundary, then they can be identically extended to points of the shift space, which is shown in

  \begin{corollary} % Paper 2.11
  \label{cor:overlapping-patterns-can-be-glued}
    Let $X$ be a $\kappa$-step subshift of $Q^M$, let $A$ be a subset of $M$, let $p$ and $p'$ be two patterns of $X_{A^{+2\kappa}}$ such that $p\restrictedTo_{\boundaryOf_{2\kappa}^+ A} = p'\restrictedTo_{\boundaryOf_{2\kappa}^+ A}$. There are two points $x$ and $x'$ of $X$ such that $x\restrictedTo_{A^{+2\kappa}} = p$, $x'\restrictedTo_{A^{+2\kappa}} = p'$, and $x\restrictedTo_{M \smallsetminus A} = x'\restrictedTo_{M \smallsetminus A}$.
  \end{corollary}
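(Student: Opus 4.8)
The plan is to reduce everything to \cref{cor:overlapping-global-configurations-can-be-glued}, which already glues two overlapping points of $X$ along the subset $A$. First I would unpack the hypothesis $p, p' \in X_{A^{+2\kappa}}$: by the definition of $X_{A^{+2\kappa}}$ there are points $y, y' \in X$ with $y\restrictedTo_{A^{+2\kappa}} = p$ and $y'\restrictedTo_{A^{+2\kappa}} = p'$. These $y$ and $y'$ are the raw material; the two points $x$ and $x'$ demanded by the statement will be built from them.

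Next I would verify that $y$ and $y'$ satisfy the overlap hypothesis of \cref{cor:overlapping-global-configurations-can-be-glued}. Since $\boundaryOf_{2\kappa}^+ A = A^{+2\kappa} \smallsetminus A \subseteq A^{+2\kappa}$, restricting the two given restrictions once more gives $y\restrictedTo_{\boundaryOf_{2\kappa}^+ A} = p\restrictedTo_{\boundaryOf_{2\kappa}^+ A}$ and $y'\restrictedTo_{\boundaryOf_{2\kappa}^+ A} = p'\restrictedTo_{\boundaryOf_{2\kappa}^+ A}$, and these coincide by the assumed agreement of $p$ and $p'$ on $\boundaryOf_{2\kappa}^+ A$. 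Hence $y\restrictedTo_{\boundaryOf_{2\kappa}^+ A} = y'\restrictedTo_{\boundaryOf_{2\kappa}^+ A}$, which is exactly the boundary agreement the corollary needs.

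I would then set $x = y\restrictedTo_A \times y'\restrictedTo_{M \smallsetminus A}$ and $x' = y'$. By \cref{cor:overlapping-global-configurations-can-be-glued}, $x$ is a point of $X$ and is identical to $y\restrictedTo_{A^{+2\kappa}} \times y'\restrictedTo_{M \smallsetminus A^{+2\kappa}}$; reading off the first factor yields $x\restrictedTo_{A^{+2\kappa}} = y\restrictedTo_{A^{+2\kappa}} = p$, while the definition of $x$ gives $x\restrictedTo_{M \smallsetminus A} = y'\restrictedTo_{M \smallsetminus A}$. Since $x' = y' \in X$ satisfies $x'\restrictedTo_{A^{+2\kappa}} = p'$ and $x'\restrictedTo_{M \smallsetminus A} = y'\restrictedTo_{M \smallsetminus A}$, the three required equations $x\restrictedTo_{A^{+2\kappa}} = p$, $x'\restrictedTo_{A^{+2\kappa}} = p'$, and $x\restrictedTo_{M \smallsetminus A} = x'\restrictedTo_{M \smallsetminus A}$ all hold simultaneously.

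There is essentially no obstacle beyond bookkeeping here, since the genuine content was already absorbed into the gluing lemma (\cref{lem:overlapping-global-configurations-can-be-glued}) and its corollary. The only point demanding a little care is the deliberately asymmetric construction — gluing to produce $x$ but leaving $x' = y'$ untouched — so that the two outputs agree on all of $M \smallsetminus A$ while each still restricts to its prescribed pattern on $A^{+2\kappa}$; the compatibility of these two demands on the overlap $A^{+2\kappa} \smallsetminus A = \boundaryOf_{2\kappa}^+ A$ is precisely what the boundary hypothesis $p\restrictedTo_{\boundaryOf_{2\kappa}^+ A} = p'\restrictedTo_{\boundaryOf_{2\kappa}^+ A}$ guarantees.
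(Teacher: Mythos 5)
Your proposal is correct and is essentially the paper's own argument: extend $p$ and $p'$ to points of $X$, observe the boundary agreement transfers, glue one restriction onto the other via the gluing result, and keep the second extension untouched. The only cosmetic difference is that you invoke \cref{cor:overlapping-global-configurations-can-be-glued} where the paper cites \cref{lem:overlapping-global-configurations-can-be-glued} directly, which amounts to the same thing.
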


  \begin{proof}
    Because $p$, $p' \in X_{A^{+2\kappa}}$, there are $x''$, $x' \in X$ such that $x''\restrictedTo_{A^{+2\kappa}} = p$ and $x'\restrictedTo_{A^{+2\kappa}} = p'$. Because $p\restrictedTo_{\boundaryOf_{2\kappa}^+ A} = p'\restrictedTo_{\boundaryOf_{2\kappa}^+ A}$, we have $x''\restrictedTo_{\boundaryOf_{2\kappa}^+ A} = x'\restrictedTo_{\boundaryOf_{2\kappa}^+ A}$. Hence, according to \cref{lem:overlapping-global-configurations-can-be-glued}, we have $x = x''\restrictedTo_A \times x'\restrictedTo_{M \smallsetminus A} \in X$. Moreover, $x\restrictedTo_{M \smallsetminus A} = x'\restrictedTo_{M \smallsetminus A}$. And, because $x = x''\restrictedTo_{A^{+2\kappa}} \times x'\restrictedTo_{M \smallsetminus A^{+2\kappa}}$, we have $x\restrictedTo_{A^{+2\kappa}} = p$.
  \end{proof}

  %A shift space if irreducible if two allowed patterns occur disjointly in a point.

%  \begin{definition} % Paper 2.7 % use blocks u and u' instead of patterns p and p'? ... bei strongly irreducible sind patterns eventuell notwendig, da wir später patterns mit F_i als Definitionsbereich arbeiten und die F_i eventuell unzusammenhängend sind ... bei subexponentiellem Wachstum können wir stets F_i als Kugeln um m_0 hernehmen, aber im Allgemeienn wohl nicht, oder? Mit der derzeitigen Definition sind Blocks = Finite Patterns und deswegen dieses Kommentar überflüssig. % TODO Ist das wirklich was wir wollen? ... g \in G vs. h \in H ... und sollte man das Muster p vielleicht noch mit h_0 \in H_0 drehen dürfen? ... außerdem wirkt diese Definition asymmetrisch, obwohl sie es nicht ist, da man die Reihenfolgen von p und p' tauschen kann.
%    Let $X$ be a subshift of $Q^M$. It is called \define{irreducible}\graffito{irreducible} if and only if, for each tuple $(p, p')$ of finite patterns allowed in $X$, there is an element $g \in G$ and a point $x \in X$ such that $\domainOf(p) \cap (g \actsOnPoint \domainOf(p')) = \emptyset$, $x\restrictedTo_{\domainOf(p)} = p$, and $x\restrictedTo_{g \actsOnPoint \domainOf(p')} = g \actsOnMap p'$. % $\domainOf(p)$ and $g \actsOnPoint \domainOf(p')$ are disjoint
%  \end{definition} % TODO See Paper: If M = \Z, this is usual definition 2.6 of irreducible.

  A shift space is strongly irreducible if two allowed finite patterns that are far enough apart are embedded in a point of the shift space, which is defined in

  \begin{definition} % Paper 4.1
  \label{def:kappa-strongly-irreducible}
    Let $X$ be a subshift of $Q^M$ and let $\kappa$ be a non-negative integer. The subshift $X$ is called \define{$\kappa$-strongly irreducible}\graffito{$\kappa$-strongly irreducible} if and only if for each tuple $(p, p')$ of finite patterns allowed in $X$ such that $d(\domainOf(p), \domainOf(p')) \geq \kappa + 1$, there is a point $x \in X$ such that $x\restrictedTo_{\domainOf(p)} = p$ and $x\restrictedTo_{\domainOf(p')} = p'$.
    % for each finite subset $A$ of $M$, each finite subset $A'$ of $M$ with $d(A, A') > \kappa$, each pattern $p \in X_A$, and each pattern $p'$ of $X_{A'}$, there is a point $c \in X$ such that $c\restrictedTo_A = p$ and $c\restrictedTo_{A'} = p'$.
    % for each tuple $(A, A')$ of finite subsets of $M$ with $d(A, A') > \kappa$ and for each tuple $(p, p')$ of patterns of $X_A \times X_{A'}$
  \end{definition}

  \begin{remark}
  \label{rem:k-strongly-irreducible-for-greater-k}
    Let $X$ be a $\kappa$-strongly irreducible subshift of $Q^M$. For each non-negative integer $\kappa'$ such that $\kappa' \geq \kappa$, the subshift $X$ is $\kappa'$-strongly irreducible. % TADA proof?
  \end{remark}

  \begin{definition}
  \label{def:strongly-irreducible}
    Let $X$ be a subshift of $Q^M$. It is called \define{strongly irreducible}\graffito{strongly irreducible} if and only if there is a non-negative integer $\kappa$ such that it is $\kappa$-strongly irreducible.
  \end{definition}

  \begin{example}[Strongly Irreducible]
  \label{ex:strongly-irreducible} % The empty shift is $0$-strongly irreducible because by the definition of allowed patterns there are no allowed patterns in the empty shift (not even the empty pattern).
    The full shift (\cref{ex:shift:full}) and the empty shift (\cref{ex:shift:empty}) are $0$-strongly irreducible. The golden mean shift is $1$-strongly irreducible. The even shift is $2$-strongly irreducible. The generalised golden mean shifts (\cref{ex:generalised-golden-mean-shift}) and the shift space of \cref{ex:strongly-irreducible-of-finite-type-without-bounded-propagation} are strongly irreducible. Of these examples, according to \cref{ex:of-finite-type-or-not}, all but the even shift are of finite type.
  \end{example}

  \begin{example}[{Not Strongly Irreducible \cite[Example~4.6]{ceccherini-silberstein:coornaert:2012}}] % TODO Another example is the Ledrappier subshift (see Remark~2.6 in ceccherini-silberstein:coornaert:2012)
  \label{ex:not-strongly-irreducible}
    In the situation of \cref{ex:full-shift-over-integers}, the set $X$ of all bi-infinite binary sequences with no two $0$'s and no two $1$'s next to each other is a shift space. It is for example generated by the forbidden blocks $00$ and $11$, in particular, it is of finite type. It consists of the two bi-infinite binary sequences with alternating $0$'s and $1$'s. And, it is not strongly irreducible; indeed, for each even and non-negative integer $\kappa$, the finite patterns $01$ and $10$ are allowed in $X$, the allowed patterns of size $\kappa$ are of the form $(01)^{\kappa/2}$ or $(10)^{\kappa/2}$, but the patterns $01(01)^{\kappa/2}10$ and $01(10)^{\kappa/2}10$ are not allowed and hence not embedded in a point of $X$.
  \end{example}

  % TODO Gromov's Theorem? It's 8.F' in his paper "Endomorphisms of symbolic algebraic varieties". And under section 3 in Fiorenzi's paper.

  A shift space has bounded propagation if a finite pattern is allowed whenever all restrictions of it to balls of a fixed radius are allowed, as defined in

  \begin{definition} % TODO In Fiorenzi's paper X is only required to be closed. But in the paper "On the density of periodic configurations in strongly irreducible subshifts" in Definition 2.7, it is required to be a subshift. % TODO This notion is due to Gromov, p. 160 in "Endomorphisms of symbolic algebraic varieties"
  \label{def:rho-bounded-propagation}
    Let $X$ be a subshift of $Q^M$ and let $\rho$ be a non-negative integer. The subshift $X$ is said to have \define{$\rho$-bounded propagation} if and only if
    \begin{multline*}
      \ForEach F \subseteq M \text{ finite} \ForEach p \in Q^F \Holds\\
          \parens[\Big]{ \parens[\big]{\ForEach f \in F \Holds p\restrictedTo_{\ball(f, \rho) \cap F} \in X_{\ball(f, \rho) \cap F}} \implies p \in X_F }.
    \end{multline*}
  \end{definition}

  \begin{remark}
    Let $X$ be a subshift of $Q^M$ with $\rho$-bounded propagation. For each non-negative integer $\rho'$ such that $\rho' \geq \rho$, the subshift $X$ has $\rho'$-bounded propagation. % TADA proof?
  \end{remark}

  \begin{definition}
  \label{def:bounded-propagation}
    Let $X$ be a subshift of $Q^M$. It is said to have \define{bounded propagation} if and only if there is a non-negative integer $\rho$ such that it has $\rho$-bounded propagation.
  \end{definition}

  A subshift with bounded propagation is strongly irreducible and of finite type, which is shown in

  \begin{lemma} % Paper 4.10 % TODO This also holds for non-finitely generated spaces. See Proposition 2.9 in "On the density of periodic configurations in strongly irreducible subshifts" % TODO cite "On the density of periodic configurations in strongly irreducible subshifts" as a paper with a good overview of strongly irreducible
  \label{lem:bounded-propagation-implies-strong-irreducibility-and-finite-type}
    Let $X$ be a subshift of $Q^M$ with $\rho$-bounded propagation. It is $\rho$-strongly irreducible and $\rho$-step.
  \end{lemma}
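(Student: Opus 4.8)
The plan is to treat the two assertions separately, using the $\rho$-bounded-propagation hypothesis (\cref{def:rho-bounded-propagation}) as a local-to-global gluing device in each case.

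For $\rho$-step I would invoke the characterisation in \cref{lem:characterisation-of-kappa-step-subshifts} with $\kappa = \rho$, so that it suffices to take a point $c \in Q^M$ with $c\restrictedTo_{\ball(m,\rho)} \in X_{\ball(m,\rho)}$ for every $m \in M$ and deduce $c \in X$. By \cref{lem:if-restrictions-to-balls-are-in-shift-then-so-is-pattern} it is enough to show $c\restrictedTo_{\ball(\rho')} \in X_{\ball(\rho')}$ for each $\rho' \in \N_0$. Fixing $\rho'$ and setting $F = \ball(\rho')$ and $p = c\restrictedTo_F$, I would verify the hypothesis of bounded propagation: for each $f \in F$ the restriction $p\restrictedTo_{\ball(f,\rho) \cap F}$ is a subpattern of $c\restrictedTo_{\ball(f,\rho)} \in X_{\ball(f,\rho)}$, hence lies in $X_{\ball(f,\rho) \cap F}$. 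Bounded propagation then yields $p \in X_F$, and letting $\rho'$ vary gives $c \in X$.

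For $\rho$-strong irreducibility I would start from a pair $(p,p')$ of finite patterns allowed in $X$ with $d(A,A') \geq \rho+1$, where $A = \domainOf(p)$ and $A' = \domainOf(p')$. First I note that \enquote{allowed} can be upgraded to genuine membership in $X_A$: if $p \semiOccursIn x$ for some $x \in X$, then by \cref{rem:semi-occurs} there is $h \in H$ with $h \actsOnMap p = x\restrictedTo_{h \actsOnPoint A}$, and since $X$ is shift-invariant (\cref{lem:shift-space-is-shift-invariant}) the point $h^{-1} \actsOnMap x$ lies in $X$ and restricts to $p$ on $A$; thus $p \in X_A$, and likewise $p' \in X_{A'}$. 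Because $d(A,A') \geq \rho + 1 \geq 1$ the sets $A$ and $A'$ are disjoint, so the patch $q \in Q^{A \cup A'}$ with $q\restrictedTo_A = p$ and $q\restrictedTo_{A'} = p'$ is well defined.

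The key step is to feed $q$ into bounded propagation with $F = A \cup A'$. For $f \in A$ the distance condition forces $\ball(f,\rho) \cap A' = \emptyset$, so $\ball(f,\rho) \cap F = \ball(f,\rho) \cap A$ and $q\restrictedTo_{\ball(f,\rho)\cap F} = p\restrictedTo_{\ball(f,\rho)\cap A}$, which lies in $X_{\ball(f,\rho)\cap A}$ because $p \in X_A$; the case $f \in A'$ is symmetric. Hence $q \in X_{A \cup A'}$, so some $x \in X$ satisfies $x\restrictedTo_A = p$ and $x\restrictedTo_{A'} = p'$, which is exactly \cref{def:kappa-strongly-irreducible}. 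I expect the only delicate point to be the distance bookkeeping that collapses $\ball(f,\rho) \cap F$ to $\ball(f,\rho) \cap A$; everything else is a routine application of the previously established characterisation and the mild observation that allowedness descends to membership in $X_A$.
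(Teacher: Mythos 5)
Your proposal is correct and follows essentially the same route as the paper: both parts feed the relevant pattern into the $\rho$-bounded-propagation hypothesis, using \cref{lem:if-restrictions-to-balls-are-in-shift-then-so-is-pattern} together with \cref{lem:characterisation-of-kappa-step-subshifts} for $\rho$-stepness and the disjoint union $p \times p'$ on $\domainOf(p) \cup \domainOf(p')$ for strong irreducibility. Your explicit upgrade of \enquote{allowed} to membership in $X_A$ via \cref{rem:semi-occurs} and shift-invariance is a point the paper leaves implicit, and is a welcome clarification rather than a deviation.
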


  \begin{proof}
    Let $(p, p')$ be a tuple of finite patterns allowed in $X$ such that $d(\domainOf(p), \domainOf(p')) \geq \rho + 1$. Moreover, let $F = \domainOf(p) \cup \domainOf(p')$ and let $p'' = p \times p' \in Q^F$. Furthermore, let $f \in F$. Then, if $f \in \domainOf(p)$, then $\ball(f, \rho) \cap F \subseteq \domainOf(p)$; and, if $f \in \domainOf(p')$, then $\ball(f, \rho) \cap F \subseteq \domainOf(p')$. Thus, in either case, $p''\restrictedTo_{\ball(f, \rho) \cap F} \in X_{\ball(f, \rho) \cap F}$. Hence, because $X$ has $\rho$-bounded propagation, we have $p'' \in X_F$. Therefore, there is an $x \in X$ such that $x\restrictedTo_F = p''$, in particular, $x\restrictedTo_{\domainOf(p)} = p$ and $x\restrictedTo_{\domainOf(p')} = p'$. In conclusion, $X$ is $\rho$-strongly irreducible.

    Let $c \in Q^M$ such that, for each $m \in M$, we have $c\restrictedTo_{\ball(m, \rho)} \in X_{\ball(m, \rho)}$. Moreover, let $\rho' \in \N_0$. Then, for each $f \in \ball(\rho')$, we have $(c\restrictedTo_{\ball(\rho')})\restrictedTo_{\ball(f, \rho) \cap \ball(\rho')} = (c\restrictedTo_{\ball(f, \rho)})\restrictedTo_{\ball(\rho') \cap \ball(f, \rho)} \in (X_{\ball(f, \rho)})_{\ball(\rho') \cap \ball(f, \rho)} = X_{\ball(f, \rho) \cap \ball(\rho')}$. Thus, because $X$ has $\rho$-bounded propagation, we have $c\restrictedTo_{\ball(\rho')} \in X_{\ball(\rho')}$. Hence, according to \cref{lem:if-restrictions-to-balls-are-in-shift-then-so-is-pattern}, we have $c \in X$. In conclusion, according to \cref{lem:characterisation-of-kappa-step-subshifts}, the subshift $X$ is $\rho$-step.
  \end{proof}

  \begin{example}[Has Bounded Propagation or Not]
  \label{ex:bounded-propagation}
    The full shift (\cref{ex:shift:full}) and the empty shift (\cref{ex:shift:empty}) have $0$-bounded propagation. The golden mean shift (\cref{ex:shift:golden-mean}) has $1$-bounded propagation. The even shift (\cref{ex:shift:even}) is, according to \cref{ex:of-finite-type-or-not}, not of finite type and hence it is, according to \cref{lem:bounded-propagation-implies-strong-irreducibility-and-finite-type}, does not have bounded propagation. The shift of \cref{ex:not-strongly-irreducible} is not strongly irreducible and hence it is, according to \cref{lem:bounded-propagation-implies-strong-irreducibility-and-finite-type}, does not have bounded propagation.
  \end{example}

  \begin{example}[{Generalised Golden Mean Shifts \cite[Example~2.8]{ceccherini-silberstein:coornaert:2012}}] % TODO We could define \kappa-step as those shift spaces generated by forbidden blocks whose domains are contained in \ball(\kappa). Then instead of \rho and \ball(\rho) we could use $\bigcup_i F_i$. % TODO There are further concrete examples of generalised golden mean shifts in the cited paper
  \label{ex:generalised-golden-mean-shift}
    Let $q$ be a positive integer, let $Q$ be the set $\setOf{0, 1, \dotsc, q}$, let $k$ be a positive integer, let $\family{F_i}_{i \in \setOf{1, 2, \dotsc, k}}$ be a family of finite subsets of $M$ that contain $m_0$, let $\rho$ be the least non-negative integer such that $\bigcup_{i \in \setOf{1, 2, \dotsc, k}} F_i \subseteq \ball(\rho)$, and let $X$ be the $\rho$-step subshift $\gen{p \in Q^{\ball(\rho)} \suchThat \ForEach i \in \setOf{1, 2, \dotsc, k} \ForEach m \in F_i \Holds p(m) \neq 0}$ of $Q^M$. The subshift $X$ is called \define{generalised golden mean shift}, it is equal to $\setOf{c \in Q^M \suchThat \ForEach i \in \setOf{1, 2, \dotsc, k} \ForEach h \in H \Exists m \in h \actsOnPoint F_i \SuchThat c(m) = 0}$, and it has $\rho$-bounded propagation, in particular, according to \cref{lem:bounded-propagation-implies-strong-irreducibility-and-finite-type}, it is $\rho$-strongly irreducible.

    In the case that $\mathcal{R}$ is the cell space from \cref{ex:full-shift-over-integers}, $q = 1$, $k = 1$, and $F_1 = \setOf{0, 1}$, the non-negative integer $\rho$ is equal to $1$ and the generalised golden mean shift $X$ is equal to the golden mean shift from \cref{ex:shift:golden-mean}.
  \end{example}

  \begin{proof}[Proof of Bounded Propagation]
    Let $F$ be a finite subset of $M$, let $p$ be a pattern of $Q^F$ such that
    \begin{equation*}
      \ForEach f \in F \Holds p\restrictedTo_{\ball(f, \rho) \cap F} \in X_{\ball(f, \rho) \cap F},
    \end{equation*}
    and let $c$ be the point of $Q^M$ that is equal to $p$ on $F$ and identically $0$ on $M \smallsetminus F$. Moreover, let $i \in I$ and let $h \in H$. If $h \actsOnPoint F_i \nsubseteq F$, then, there is an $m \in (h \actsOnPoint F_i) \smallsetminus F$, and, by definition of $c$, we have $c(m) = 0$. Otherwise, if $h \actsOnPoint F_i \subseteq F$, then, because $m_0 \in F_i$, we have $h \actsOnPoint m_0 \in F$; thus, because $p\restrictedTo_{\ball(h \actsOnPoint m_0, \rho) \cap F} \in X_{\ball(h \actsOnPoint m_0, \rho) \cap F}$, there is a point $x \in X$ such that $p\restrictedTo_{\ball(h \actsOnPoint m_0, \rho) \cap F} = x\restrictedTo_{\ball(h \actsOnPoint m_0, \rho) \cap F}$; hence, by the characterisation of $X$, there is a cell $m \in h \actsOnPoint F_i$ such that $x(m) = 0$; and therefore, because $h \actsOnPoint F_i \subseteq (h \actsOnPoint \ball(\rho)) \cap F = \ball(h \actsOnPoint m_0, \rho) \cap F$, we have $c(m) = p(m) = x(m) = 0$. Hence, in either case, there is an $m \in h \actsOnPoint F_i$ such that $c(m) = 0$. Therefore, by the characterisation of $X$, we have $c \in X$ and hence $p = c\restrictedTo_F \in X_F$. In conclusion, $X$ has $\rho$-bounded propagation.
  \end{proof}

  \begin{example}[{\cite[Section 4, at the very end]{fiorenzi:2003}}]
  \label{ex:strongly-irreducible-of-finite-type-without-bounded-propagation}
    In the situation of \cref{ex:full-shift-over-integers}, the subshift $\gen{010, 111}$ of $\setOf{0, 1}^\Z$ is strongly irreducible and of finite type but does not have bounded propagation. % TODO It is 3-strongly irreducible (Fiorenzi claims it is 2-strongly irreducible, but that is not correct because "0 1 _ _ 1 0" does not occur in any configuration, because _ cannot be filled in without producing a forbidden block). This can be shown by considering all possible "x1 x2 _ _ _ y1 y2" and showing that the _ can be filled in without producing forbidden words. % TODO mention that "does not have bounded propagation" is proved in the cited paper?
  \end{example}

  A map from a shift space to another shift space is local if the image of a point is uniformly and locally determined in each cell, as defined in

  % TODO Show that \bullet is well-defined.
  \begin{definition} % Paper 2.2 % TODO Bild könnte auch Teilmenge von (Q')^M sein (also statt \Delta \form X \to Q^M kann man auch (Q')^M nehmen (here and everyhwere else ... but as Francesca points out in her paper using Q^M is not less general, because we may simply use $Q \cup Q'$ and forbid the "patterns" $q' \in Q'$ in $X$ and the "patterns" $q \in Q$ in $X'$))
  \label{def:kappa-local-map} % TODO We used to use \ball(\kappa) instead of N, but then examples were kind of cumbersome to write down.
    Let $X$ and $Y$ be two subshifts of $Q^M$, let $\Delta$ be a map from $X$ to $Y$, let $\kappa$ be a non-negative integer, let $N$ be a subset of $\ball(\kappa)$ such that $G_0 \actsOnPoint N \subseteq N$, and let $\bullet_{H_0}$ be the left group action $\actsOnMap\restrictedTo_{H_0 \times X_N \to X_N}$ of $H_0$ on $X_N$. The map $\Delta$ is called \define{$\kappa$-local}\graffito{$\kappa$-local} if and only if there is a $\bullet_{H_0}$-invariant map $\delta \from X_N \to Q$ such that % , for each point $x \in X$ and each cell $m \in M$,
    \begin{equation*}
      \ForEach x \in X \ForEach m \in M \Holds \Delta(x)(m) = \delta(n \mapsto x(m \isSemiActedUponBy n)). %\qedhere % TODO \delta((g_{m_0, m}^{-1} \actsOnMap x)\restrictedTo_{\ball(\kappa)}) ... (g_{m_0, m}^{-1} \actsOnMap c)\restrictedTo_{\ball(\kappa)} = [n \mapsto (g_{m_0, m}^{-1} \actsOnMap c)(n)] = [n \mapsto c(g_{m_0, m} \actsOnPoint n)] = [n \mapsto c(m \isSemiActedUponBy n)]
    \end{equation*}
  \end{definition}

  \begin{remark} % TODO context: \Delta is \kappa-local and \delta is the one from the definition.
    % Let $\Delta$ be a $\kappa$-local map from $X$ to $Y$ and let $\delta$ be a $\bullet_{H_0}$-invariant map associated with it.
    For each point $x \in X$ and each cell $m \in M$, we have $\Delta(x)(m) = \delta((g_{m_0, m}^{-1} \actsOnMap x)\restrictedTo_N)$.
  \end{remark}

  \begin{remark}
  \label{rem:k-local-for-greater-k}
    Let $\Delta$ be a $\kappa$-local map from $X$ to $Y$. For each non-negative integer $\kappa'$ such that $\kappa' \geq \kappa$, the map $\Delta$ is $\kappa'$-local. % TADA proof?
  \end{remark}

  \begin{definition}
  \label{def:local-map}
    Let $X$ and $Y$ be two subshifts of $Q^M$ and let $\Delta$ be a map from $X$ to $Y$. The map $\Delta$ is called \define{local}\graffito{local} if and only if there is a non-negative integer $\kappa$ such that it is $\kappa$-local.
  \end{definition}

  \begin{remark}
  \label{rem:local-map-iff-cellular-automaton}
    Let $X$ and $Y$ be two subshifts of $Q^M$ and let $\Delta$ be a map from $X$ to $Y$. The map $\Delta$ is local if and only if it is the restriction to $X \to Y$ of the global transition function of a big-cellular automaton over $\mathcal{R}$ with set of states $Q$ and finite neighbourhood. % and $\bullet_{H_0}$-invariant local transition function. % ==>: Extend \delta preserving $\bullet_{H_0}$-invariance ...... <==: The explanation when we still used \ball(\kappa) instead of N was: Extend neighbourhood to ball and restrict new local transition function to X_{\ball(\kappa)} to get \delta in definition of local (we do not need to do this now!)
  \end{remark}

  \begin{example}[{Sliding Block Codes \cite[Definition~1.5.1]{lind:marcus:1995}}]
    In the situation of \cref{ex:full-shift-over-integers}, local maps are but sliding block codes.
  \end{example}

  \begin{example}[{From Golden Mean to Even Shift \cite[Example~1.5.6]{lind:marcus:1995}}]
  \label{ex:from-golden-mean-to-even-shift-local-map}
    Let $X$ be the golden mean shift (\cref{ex:shift:golden-mean}), let $Y$ be the even shift (\cref{ex:shift:even}), let $\delta$ be the map from $X_{\setOf{0, 1}}$ to $\setOf{0, 1}$ given by $00 \mapsto 1$, $01 \mapsto 0$, and $10 \mapsto 0$, and let $\Delta$ be the map from $X$ to $Y$ given by $x \mapsto [z \mapsto \delta(n \mapsto x(z + n))]$. The map $\Delta$ is, by definition, local and it is, according to \cite[Example~1.5.6]{lind:marcus:1995}, surjective.
  \end{example}

  Domain and codomain of a local map can be restricted simultaneously to a subset of cells and its interior, as is done in

  \begin{definition} % TODO This is similarly defined in other papers.
  \label{def:restriction-of-local-map}
    Let $\Delta$ be a $\kappa$-local map from $X$ to $Y$ and let $A$ be a subset of $M$. The map
    \begin{align*}
      \Delta_A^- \from X_A &\to     Y_{A^{-\kappa}},\\
                         p &\mapsto \Delta(c)\restrictedTo_{A^{-\kappa}}, \text{ where } c \in X \text{ such that } c\restrictedTo_A = p,
    \end{align*}
    is called \define{restriction of $\Delta$ to $A$}\graffito{restriction $\Delta_A^-$ of $\Delta$ to $A$}.
  \end{definition}

  The image of a local map is a shift space, which is shown in

  \begin{lemma} % TODO In meinem Aufschrieb zu "An Introduction to Symbolic Dynamics" ist ein Beweis ohne Topologie
  \label{lem:image-of-local-map-is-subshift}
    Let $\Delta$ be a local map from $X$ to $Y$. Its image $\Delta(X)$ is a subshift of $Q^M$.
  \end{lemma}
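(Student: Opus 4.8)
The plan is to apply \cref{lem:shift-space-iff-invariant-and-compact} and verify that $\Delta(X)$ is both shift-invariant and compact as a subset of $Q^M$. For the shift-invariance, I would exploit equivariance: by \cref{rem:local-map-iff-cellular-automaton} the local map $\Delta$ is the restriction to $X \to Y$ of the global transition function of a big-cellular automaton over $\mathcal{R}$, which is $\actsOnMap_H$-equivariant. Hence, for each $h \in H$ and each $x \in X$, we have $h \actsOnMap \Delta(x) = \Delta(h \actsOnMap x)$; and, because $X$ is shift-invariant by \cref{lem:shift-space-is-shift-invariant}, the point $h \actsOnMap x$ again lies in $X$, so $h \actsOnMap \Delta(x) \in \Delta(X)$. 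This shows $h \actsOnMap \Delta(X) \subseteq \Delta(X)$ for each $h \in H$, and by \cref{rem:subseteq-sufficient-for-shift-invariance} the image $\Delta(X)$ is shift-invariant.

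For the compactness, I would first observe that $\Delta$ is continuous with respect to the prodiscrete topology. Writing $\Delta$ as a $\kappa$-local map with neighbourhood $N \subseteq \ball(\kappa)$, the state $\Delta(x)(m) = \delta(n \mapsto x(m \isSemiActedUponBy n))$ depends only on the restriction of $x$ to the finite set $m \isSemiActedUponBy N$; thus each coordinate of $\Delta(x)$ is locally constant in $x$, and $\Delta$ is continuous. Since $Q^M$ is compact and $X$, being a subshift, is closed in $Q^M$ and hence compact, its continuous image $\Delta(X)$ is compact. Combining this with the shift-invariance established above, \cref{lem:shift-space-iff-invariant-and-compact} yields that $\Delta(X)$ is a shift space, that is, a subshift of $Q^M$.

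The genuinely load-bearing observation is that $\Delta(x)(m)$ factors through the finite-coordinate projection $x \mapsto x\restrictedTo_{m \isSemiActedUponBy N}$, which is exactly what makes $\Delta$ continuous and lets compactness carry the argument; the rest reduces to combining $\actsOnMap_H$-equivariance with the shift-invariance of $X$ and invoking the compactness characterisation of subshifts. I expect the only point requiring care to be the verification of continuity, since all remaining steps are immediate once the relevant earlier results are in hand.
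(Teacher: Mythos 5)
Your proposal is correct and follows essentially the same route as the paper: both arguments establish shift-invariance via the $\actsOnMap_H$-equivariance of the global transition function and compactness via continuity of $\Delta$ on the compact set $X$, then invoke \cref{lem:shift-space-iff-invariant-and-compact}. The only cosmetic difference is that you verify continuity directly from the finite-coordinate dependence of $\Delta(x)(m)$, whereas the paper cites an external result for this.
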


  \begin{proof} % TODO bei \cite[Corollary~3]{wacker:automata:2016} nutzen wir natürlich aus, dass local map = Einschränkung eines semi-cellular automaton (see rem:local-map-iff-cellular-automaton) und das Einschränkungen Stetigkeit erhalten
    According to \cref{lem:shift-space-iff-invariant-and-compact}, the shift space $X$ is compact. And, according to \cref{rem:local-map-iff-cellular-automaton} and \cite[Corollary~3]{wacker:automata:2016}, the map $\Delta$ is continuous. Therefore, the topological space $\Delta(X)$ is compact and hence closed. Moreover, because $h \actsOnMap \Delta(X) = \Delta(h \actsOnMap X) = \Delta(X)$, the topological space $\Delta(X)$ is shift-invariant. Therefore, according to \cref{lem:shift-space-iff-invariant-and-compact}, the topological space $\Delta(X)$ is a subshift of $Q^M$.
  \end{proof}

  The difference of two points of the full shift is the set of cells in which they differ, as defined in

  \begin{definition}
  \label{def:difference-of-two-points-of-the-full-shift}
    Let $c$ and $c'$ be two points of $Q^M$. The set
%    \begin{equation*} % TADA
      $\differenceOf(c, c') = \setOf{m \in M \suchThat c(m) \neq c'(m)}$ % \mathnote{difference $\differenceOf(c, c')$ of $c$ and $c'$}
%    \end{equation*}
    is called \define{difference of $c$ and $c'$}.
  \end{definition}

  A local map is pre-injective if it is injective on points with finite support, as defined in

  \begin{definition}
  \label{def:pre-injective-for-local-maps}
    Let $\Delta$ be a local map from $X$ to $Y$. It is called \define{pre-injective}\graffito{pre-injective} if and only if, for each tuple $(x, x') \in X \times X$ such that $\differenceOf(x, x')$ is finite and $\Delta(x) = \Delta(x')$, we have $x = x'$.
  \end{definition}

  A bijective local map with local inverse is a conjugacy, and its domain and codomain are conjugate, which is defined in

  \begin{definition} % TODO Is the inverse always local? Does this follow from our Curtis-Hedlund theorem? It is true fro sliding block codes (see Lind/Marcus, Theorem 1.5.14, page 21) % TODO Example. For example Lind/Marcus, Example 1.5.10
  \label{def:conjugacy}
    Let $\Delta$ be a local map from $X$ to $Y$. It is called \define{conjugacy}\graffito{conjugacy} if and only if it is bijective and its inverse is local.
  \end{definition}

  \begin{definition} % Paper 2.4
  \label{def:conjugate}
    Let $X$ and $Y$ be two shift spaces. They are called \define{conjugate}\graffito{conjugate} if and only if there is a conjugacy from $X$ to $Y$.
  \end{definition}

  Entropy of shift spaces is invariant under conjugacy, which is shown in

  \begin{lemma}
  \label{lem:entropy-invariant-under-conjugacy}
    Let $\mathcal{R}$ be right amenable, let $\mathcal{F}$ be a right Følner net in $\mathcal{R}$, and let $X$ and $Y$ be two conjugate subshifts of $Q^M$. Then, $\entropyOf_{\mathcal{F}}(X) = \entropyOf_{\mathcal{F}}(Y)$.
  \end{lemma}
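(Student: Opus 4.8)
The plan is to reduce everything to a pattern-counting argument. Recall that $\entropyOf_{\mathcal{F}}(X)$ is, by definition, the asymptotic growth rate of the number of $F_i$-patterns occurring in $X$, i.e. (up to a fixed constant) $\limsup_{i \in I} \log\absoluteValueOf{X_{F_i}} \mathbin{/} \absoluteValueOf{F_i}$. Let $\Delta \from X \to Y$ be a conjugacy; by \cref{def:conjugacy} both $\Delta$ and $\Delta^{-1}$ are local, so by \cref{rem:k-local-for-greater-k} I may fix a single non-negative integer $\kappa$ for which $\Delta$ and $\Delta^{-1}$ are simultaneously $\kappa$-local. It then suffices to prove $\entropyOf_{\mathcal{F}}(Y) \leq \entropyOf_{\mathcal{F}}(X)$, since interchanging the roles of $X$ and $Y$ via $\Delta^{-1}$ yields the reverse inequality and hence equality.

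The core is the counting inequality $\absoluteValueOf{Y_{A^{-\kappa}}} \leq \absoluteValueOf{X_A}$ for every finite $A \subseteq M$. To obtain it I would consider the restriction $\Delta_A^- \from X_A \to Y_{A^{-\kappa}}$ of \cref{def:restriction-of-local-map}: it is well-defined because $\kappa$-locality makes $\Delta(c)\restrictedTo_{A^{-\kappa}}$ depend only on $c\restrictedTo_A$ (for $m \in A^{-\kappa}$ the neighbours $m \isSemiActedUponBy n$ with $n \in N \subseteq \ball(\kappa)$ all lie in $\ball(m, \kappa) \subseteq A$), and it is surjective because $\Delta$ maps $X$ onto $Y$: any $q \in Y_{A^{-\kappa}}$ equals $y\restrictedTo_{A^{-\kappa}}$ for some $y = \Delta(x) \in Y$, whence $\Delta_A^-(x\restrictedTo_A) = q$. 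Surjectivity gives $\absoluteValueOf{Y_{A^{-\kappa}}} \leq \absoluteValueOf{X_A}$.

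It remains to trade the interior count $\absoluteValueOf{Y_{F_i^{-\kappa}}}$ for the full count $\absoluteValueOf{Y_{F_i}}$, and this is where the right Følner net enters. Since every $F_i$-pattern of $Y$ is determined by its restriction to $F_i^{-\kappa}$ together with its values on $F_i \smallsetminus F_i^{-\kappa} = \boundaryOf_\kappa^- F_i$, I get $\absoluteValueOf{Y_{F_i}} \leq \absoluteValueOf{Y_{F_i^{-\kappa}}} \cdot \absoluteValueOf{Q}^{\absoluteValueOf{\boundaryOf_\kappa^- F_i}}$; combined with the counting inequality applied to $A = F_i$ this yields
\begin{equation*}
  \frac{\log \absoluteValueOf{Y_{F_i}}}{\absoluteValueOf{F_i}} \leq \frac{\log \absoluteValueOf{X_{F_i}}}{\absoluteValueOf{F_i}} + \frac{\absoluteValueOf{\boundaryOf_\kappa^- F_i}}{\absoluteValueOf{F_i}} \log \absoluteValueOf{Q}.
\end{equation*}
Because $\boundaryOf_\kappa^- F_i = F_i \smallsetminus F_i^{-\kappa} \subseteq F_i^{+\kappa} \smallsetminus F_i^{-\kappa} = \boundaryOf_\kappa F_i$, the defining property of the right Følner net forces $\absoluteValueOf{\boundaryOf_\kappa^- F_i} \mathbin{/} \absoluteValueOf{F_i} \to 0$ along $(I, \leq)$, so passing to the limit superior annihilates the boundary term and delivers $\entropyOf_{\mathcal{F}}(Y) \leq \entropyOf_{\mathcal{F}}(X)$; by symmetry, equality.

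The main obstacle I anticipate is not the counting inequality — which is essentially a restatement of locality together with surjectivity of $\Delta$ — but the passage from $F_i^{-\kappa}$ to $F_i$: one must ensure the boundary correction $\absoluteValueOf{\boundaryOf_\kappa^- F_i} \log \absoluteValueOf{Q}$ is asymptotically negligible, which is exactly what the Følner condition guarantees and is the reason right amenability is hypothesised. The only routine care needed is the inclusion $\boundaryOf_\kappa^- F_i \subseteq \boundaryOf_\kappa F_i$ used to invoke that condition, together with the elementary fact that a vanishing summand does not affect the limit superior.
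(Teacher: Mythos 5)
Your argument is correct, and it proves strictly more than it needs to: the chain $\absoluteValueOf{Y_{F_i}} \leq \absoluteValueOf{Y_{F_i^{-\kappa}}} \cdot \absoluteValueOf{Q}^{\absoluteValueOf{\boundaryOf_\kappa^- F_i}} \leq \absoluteValueOf{X_{F_i}} \cdot \absoluteValueOf{Q}^{\absoluteValueOf{\boundaryOf_\kappa^- F_i}}$, together with the Følner condition applied to $\boundaryOf_\kappa^- F_i \subseteq \boundaryOf_\kappa F_i$, establishes that entropy does not increase under \emph{any} local map (surjectivity onto the codomain is all you use of the conjugacy in that direction), and bijectivity only enters to run the same argument for $\Delta^{-1}$. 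The paper's proof has exactly this structure but is a two-line citation: it invokes \cref{rem:local-map-iff-cellular-automaton} to view $\Delta$ and $\Delta^{-1}$ as restrictions of global transition functions of big-cellular automata and then appeals to an external monotonicity result (Theorem~3 of the author's companion paper on the Garden of Eden theorem for group sets), which is the statement $\entropyOf_{\mathcal{F}}(\Delta(X)) \leq \entropyOf_{\mathcal{F}}(X)$ that your counting argument proves from scratch. So you have, in effect, unfolded the cited ingredient into a self-contained proof; the only points worth polishing are the degenerate case $X = \emptyset$ (then $Y = \emptyset$ and the logarithms need the usual convention) and an explicit remark that $\Delta_A^-$ is well defined precisely because of $\kappa$-locality, which you do address. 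Your identification of the Følner condition as the reason right amenability is hypothesised is exactly right.
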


  \begin{proof}
    This is a direct consequence of \cref{rem:local-map-iff-cellular-automaton} and \cite[Theorem~3]{wacker:garden:2016}.
  \end{proof}

  % TODO introduction

  \begin{definition} % Paper 2.15
  \label{def:Moore-and-Myhill-properties}
    Let $X$ be a subshift of $Q^M$. It is said to have the % called
    \begin{enumerate}
      \item \define{Moore property}\graffito{Moore property} if and only if each surjective local map from $X$ to $X$ is pre-injective;
      \item \define{Myhill property}\graffito{Myhill property} if and only if each pre-injective local map from $X$ to $X$ is surjective. %\qedhere
    \end{enumerate}
  \end{definition}

  \begin{remark}
  \label{rem:Moore-and-Myhill-are-invariant-under-conjugacy}
    Both the Moore and the Myhill property is invariant under conjugacy.
  \end{remark}

  %----------------------------------------------------------------------------------------
  \section{Tilings}
  \label{sec:tilings} % TODO Die genaue Rolle von Tilings für die Beweise darstellen. We replace patterns by other patterns about points of tilings and the tiling guarentees that the replace patterns are far enough apart. We at least need them to not overlap, but to guarentee that the resulting point belongs to a shift space they must be even farther apart.
  %----------------------------------------------------------------------------------------

  \subsubsection*{Contents.} A $\ntuple{\theta, \kappa, \theta'}$-tiling is a subset of cells such that the balls of radius $\theta$ about those cells are pairwise at least $\kappa + 1$ apart and the balls of radius $\theta'$ about those cells cover all cells (see \cref{def:pairwise-at-least-apart,def:theta-kappa-theta-prime-tiling}). If there are infinitely many cells, then, for each $\theta$ and $\kappa$, there is a $\ntuple{\theta, \kappa, 4 \theta + 2 \kappa}$-tiling (see \cref{thm:there-is-a-rho-kappa-theta-tiling}). And, a subset of a strongly irreducible shift space has less entropy than that space if about each point of a $\ntuple{\theta, \kappa, \theta'}$-tiling the subset has fewer patterns with ball-shaped domains of radius $\theta$ than the space (see \cref{thm:entropy-less-if-real-subsets-for-rho-kappa-theta-tiling}); in the proof of that statement we use \cref{lem:technical-inequality-for-theorem-entropy-less-if-real-subsets-for-rho-kappa-theta-tiling,lem:set-contained-in-unition-of-balls-and-of-interior-of-set,cor:subshifts-upper-bound-of-tiling-cap-folner-net,lem:subshift-with-at-least-two-points-has-at-least-two-patterns-for-each-domain}.

  \begin{definition}
  \label{def:pairwise-at-least-apart}
    Let $\family{A_j}_{j \in J}$ be a family of subsets of $M$ and let $\kappa$ be a non-negative integer. The family $\family{A_j}_{j \in J}$ is called \define{pairwise at least $\kappa + 1$ apart}\graffito{pairwise at least $\kappa + 1$ apart} if and only if % for each tuple $(j, j')$ of indices in $J$ such that $j \neq j'$, we have $d(A_j, A_{j'}) \geq \kappa + 1$.
    \begin{equation*}
      \ForEach j \in J \ForEach j' \in J \Holds \parens[\big]{j \neq j' \implies d(A_j, A_{j'}) \geq \kappa + 1}. %\qedhere
    \end{equation*}
  \end{definition}

  \begin{remark}
  \label{rem:pairwise-kappa-apart-vs-pairwise-disjoint}
    Each pairwise at least $\kappa + 1$ apart family is pairwise disjoint. And each pairwise disjoint family is pairwise at least $0 + 1$ apart.
  \end{remark}

  \begin{definition} % TODO Add the following remark in the final version: "In the paper T was a sequence instead of a set and called (...)-net."
  \label{def:theta-kappa-theta-prime-tiling}
    Let $T$ be a subset of $M$, and let $\theta$, $\kappa$, and $\theta'$ be three non-negative integers. The set $T$ is called \define{$\ntuple{\theta, \kappa, \theta'}$-tiling of $\mathcal{R}$}\graffito{$\ntuple{\theta, \kappa, \theta'}$-tiling $T$ of $\mathcal{R}$} if and only if the family $\family{\ball(t, \theta)}_{t \in T}$ is pairwise at least $\kappa + 1$ apart and the family $\family{\ball(t, \theta')}_{t \in T}$ is a cover of $M$.
  \end{definition}

  \begin{remark}
  \label{rem:apart-tilint-vs-tiling}
    According to \cref{rem:pairwise-kappa-apart-vs-pairwise-disjoint}, each $\ntuple{\theta, \kappa, \theta'}$-tiling of $\mathcal{R}$ is a $\ntuple{\ball(\theta), \ball(\theta')}$-tiling of $\mathcal{R}$, see \cite[Definition 2]{wacker:garden:2016}; and each $\ntuple{\ball(\theta), \ball(\theta')}$-tiling of $\mathcal{R}$ is a $\ntuple{\theta, 0, \theta'}$-tiling of $\mathcal{R}$.
  \end{remark}

% TODO Dis: Uncomment.
%  \begin{example}[Lattice]
%    The $\ntuple{\ball(1), \ball(2)}$-tiling from \cref{ex:lattice:tiling} is a $\ntuple{1, 1, 2}$-tiling of $\mathcal{R}$.
%  \end{example}
%
%  \begin{example}[Tree]
%    The $\ntuple{\ball(1), \ball(2)}$-tiling from \cref{ex:tree:tiling} is a $\ntuple{1, 0, 2}$-tiling of $\mathcal{R}$.
%  \end{example}

  Greedily picking elements that are pairwise far enough apart yields a tiling, which we show in

  \begin{theorem} % Paper 4.3 % \rho wie radius für balls % \kappa für Abstände wie bei strongly irreducible % \theta wie thickness für closure, interior and boundary
  \label{thm:there-is-a-rho-kappa-theta-tiling}
    Let $M$ be infinite, and let $\theta$ and $\kappa$ be two non-negative integers. There is a countably infinite $\ntuple{\theta, \kappa, \theta'}$-tiling $T$ of $\mathcal{R}$, where $\theta' = 4 \theta + 2 \kappa$.
  \end{theorem}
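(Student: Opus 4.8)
The plan is to construct $T$ as a maximal \emph{packing}: a subset of $M$ whose points are pairwise at distance at least $2\theta + \kappa + 1$ in the $S$-metric. First I would set $r = 2\theta + \kappa + 1$ and consider the collection $\mathcal{P}$ of all subsets $P \subseteq M$ with $d(t, t') \geq r$ for all distinct $t, t' \in P$, ordered by inclusion. Since the defining constraint is a condition on pairs, the union of any chain in $\mathcal{P}$ again lies in $\mathcal{P}$; as $\mathcal{P}$ contains $\emptyset$ and all singletons it is nonempty, so Zorn's lemma yields a maximal element $T$. Because $M = \bigcup_{\rho \in \N_0} \ball(\rho)$ is countable, one could equally enumerate $M$ and greedily add each cell that keeps the packing condition, which is the construction the theorem's name alludes to.

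Next I would verify the two tiling properties. For the separation, the key quantitative input is the elementary bound $d(\ball(t, \theta), \ball(t', \theta)) \geq d(t, t') - 2\theta$, which follows from the triangle inequality: any $a \in \ball(t, \theta)$ and $a' \in \ball(t', \theta)$ satisfy $d(a, a') \geq d(t, t') - d(t, a) - d(t', a') \geq d(t, t') - 2\theta$. Hence for distinct $t, t' \in T$ we get $d(\ball(t, \theta), \ball(t', \theta)) \geq r - 2\theta = \kappa + 1$, so the family $\family{\ball(t, \theta)}_{t \in T}$ is pairwise at least $\kappa + 1$ apart. For the cover, I would use maximality: for any $m \in M$, either $m \in T$, or $T \cup \setOf{m} \notin \mathcal{P}$, so some $t \in T$ has $d(m, t) \leq r - 1 = 2\theta + \kappa \leq 4\theta + 2\kappa = \theta'$; in both cases $m \in \ball(t, \theta')$. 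Thus $\family{\ball(t, \theta')}_{t \in T}$ covers $M$ and $T$ is a $\ntuple{\theta, \kappa, \theta'}$-tiling.

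Finally I would argue that $T$ is countably infinite. Since $S$ is finite the $S$-Cayley graph is locally finite, so every ball $\ball(t, \theta')$ is finite. If $T$ were finite, then $\bigcup_{t \in T} \ball(t, \theta')$ would be a finite union of finite sets, hence finite, contradicting that it covers the infinite set $M$; so $T$ is infinite. As $T \subseteq M$ and $M$ is countable, $T$ is countably infinite.

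I expect the only real subtlety to be bookkeeping rather than any deep difficulty: confirming that the pairwise condition passes to unions of chains (so that Zorn applies), and that the center-separation bound $d(\ball(t, \theta), \ball(t', \theta)) \geq d(t, t') - 2\theta$ converts a packing into the required apartness. Note that the argument in fact yields covering radius $2\theta + \kappa$, so the stated value $\theta' = 4\theta + 2\kappa$ holds with room to spare.
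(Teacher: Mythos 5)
Your proposal is correct, but it takes a genuinely different route from the paper. You construct $T$ in one step as a maximal $r$-separated packing of all of $M$ (with $r = 2\theta + \kappa + 1$), obtained via Zorn's lemma or a greedy enumeration of the countable set $M$; maximality immediately gives every cell within distance $r - 1 = 2\theta + \kappa$ of $T$, and the triangle-inequality bound $d(\ball(t,\theta), \ball(t',\theta)) \geq d(t,t') - 2\theta$ gives the required apartness. The paper instead builds $T$ stratified by distance from the origin: for each $i \in \N_0$ it greedily selects an $r$-separated subset $M_i$ of the sphere $\sphere(i\,r)$ whose $(2\theta+\kappa)$-closure covers that sphere, and takes $T = \bigcup_i M_i$. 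The layered construction only controls separation within each layer directly and must invoke a separate estimate that distinct spheres of the chosen radii are themselves at least $r$ apart; and because an arbitrary cell must first be moved to the nearest selected sphere (cost $2\theta+\kappa$) and then to a point of $M_i$ on that sphere (another $2\theta+\kappa$), it only achieves covering radius $4\theta + 2\kappa$ --- which is exactly why the theorem is stated with $\theta' = 4\theta + 2\kappa$. Your global packing achieves covering radius $2\theta + \kappa$, so the stated $\theta'$ holds with room to spare, and your infiniteness argument (finitely many finite balls cannot cover the infinite, countable $M$) is sound and simpler than the paper's observation that the layers $M_i$ are non-empty and pairwise disjoint. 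What the paper's approach buys is that it is explicitly constructive and keeps all choices inside finite sets (each sphere is finite), avoiding any appeal to Zorn; what yours buys is a shorter argument and a quantitatively better tiling. Both are complete proofs of the stated theorem.
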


  \begin{usage-note}
    In the proof, infiniteness of $M$ is used to deduce that spheres of arbitrarily big radii are non-empty.
  \end{usage-note}

  \begin{proof-sketch}
    From each of the spheres $\sphere(i (2 \theta + \kappa + 1))$, for $i \in \N_0$, pick elements that are pairwise at least $2 \theta + \kappa + 1$ apart and whose $(2 \theta + \kappa)$-closure covers the sphere --- they constitute a set $T$. The family $\family{\ball(t, \theta)}_{t \in T}$ is pairwise at least $\kappa + 1$ apart and the family $\family{\ball(t, 4 \theta + 2 \kappa)}_{t \in T}$ is a cover of $M$. See \cref{fig:there-is-a-rho-kappa-theta-tiling} for a schematic representation.
    \begin{figure}%[bt]
      \centering
      \begin{tikzpicture}[> = To, x=1cm, y=1cm, circle dotted/.style = {dash pattern = on .05mm off 1.5pt, line cap = round}]
        \pgfmathsetmacro\t{0.5} % \theta
        \pgfmathsetmacro\k{1} % \kappa
        \pgfmathsetmacro\c{5} % |S_1|
        \pgfmathsetmacro\r{0.0625} % radius of points

        \draw[fill = black] (0,0) circle[radius = \r] node {}; % $m_0$
        \draw (0,0) circle[radius = {1 * ((2 * \t) + \k + 1)}] node {}; % $S_i$
        \draw (0,0) circle[radius = {2 * ((2 * \t) + \k + 1)}] node {}; % $S_{i + 1}$

        \foreach \a in {1,2,...,\c} {
          \draw[fill = black] (\a * 360 / \c: {1 * ((2 * \t) + \k + 1)}) circle[radius = \r] node[below] {$m_{i, \a}$};
          \draw[circle dotted] (\a * 360 / \c: {1 * ((2 * \t) + \k + 1)}) circle[radius = \t];
          \draw[dashdotted] (\a * 360 / \c: {1 * ((2 * \t) + \k + 1)}) circle[radius = {\t + \k}];
          \draw[dashed] (\a * 360 / \c: {1 * ((2 * \t) + \k + 1)}) circle[radius = {(2 * \t) + \k}];
        }
        \draw[fill = black] (1.75 * 360 / \c: {1.75 * ((2 * \t) + \k + 1)}) circle[radius = \r] node[above] {$m$}
                            -- %node[right] {$d(m, m') \leq 2 \theta + \kappa$}
                            (1.75 * 360 / \c: {1 * ((2 * \t) + \k + 1)}) circle[radius = \r] node[below] {$m'$}
                            -- %node[above] {$d(m', m'') \leq 2 \theta + \kappa$}
                            (2 * 360 / \c: {1 * ((2 * \t) + \k + 1)}) node[below] {$\phantom{m_{i, 2}}\ \qquad = m''$};
      \end{tikzpicture}
      \caption{%
        Schematic representation of the set-up of the proof of \cref{thm:there-is-a-rho-kappa-theta-tiling}. % TODO In anderen Bildunterschriften wurde "smallest" (statt "smaller") und "largest" (statt "larger") verwendet. Was ist besser?
        The whole space is $M$; the dot in the centre is $m_0$; the smaller solid circle is $M_{i, 1} = \sphere(i (2 \theta + \kappa + 1))$ and the larger solid circle is $M_{i + 1, 1} = \sphere((i + 1) (2 \theta + \kappa + 1))$; the dots on the smaller solid circle are the elements of $M_i$; the region enclosed by the dotted circle about $m_{i,j}$ is $\ball(m_{i,j}, \theta)$; the region enclosed by the dash-dotted circle about $m_{i,j}$ is $\ball(m_{i,j}, \theta + \kappa)$; the region enclosed by the dashed circle about $m_{i,j}$ is $\ball(m_{i,j}, 2 \theta + \kappa)$; the dots labelled $m$, $m'$, and $m''$ are the respective elements from the last part of the proof.
      }
      \label{fig:there-is-a-rho-kappa-theta-tiling}
    \end{figure}
  \end{proof-sketch}

  \begin{proof} % TODO Induktive Definitionen schöner aufschreiben!
    Let $i \in \N_0$. Furthermore, let $M_{i,1} = \sphere(i (2 \theta + \kappa + 1))$. Then, because $M$ is infinite, according to \cite[Corollary~8.15]{wacker:growth:2017}, the set $M_{i,1}$ is non-empty and finite. For $j \in \N_+$ in increasing order, if $M_{i,j} \smallsetminus \setOf{m_{i,1}, m_{i,2}, \dotsc, m_{i,j-1}} \neq \emptyset$, then choose $m_{i,j} \in M_{i,j} \smallsetminus \setOf{m_{i,1}, m_{i,2}, \dotsc, m_{i,j-1}}$ and put
    \begin{align*}
      M_{i,j+1} &= \setOf{m_{i,j}} \cup M_{i,j} \smallsetminus \ball(m_{i,j}, 2 \theta + \kappa)\\
                &= \setOf{m \in M_{i,j} \suchThat d(m, m_{i,j}) = 0 \text{ or } d(m, m_{i,j}) \geq 2 \theta + \kappa + 1};
    \end{align*}
    otherwise stop, put $j_i = j$ and put $M_i = M_{i, j_i}$ (see \cref{fig:there-is-a-rho-kappa-theta-tiling}).
  %  Choose $m_{i,1} \in M_{i,1}$ and put
  %  \begin{align*}
  %    M_{i,2} &= \setOf{m_{i,1}} \cup M_{i,1} \smallsetminus \ball(m_{i,1}, 2 \theta + \kappa)\\
  %            &= \setOf{m \in M_{i,1} \suchThat d(m, m_{i,1}) = 0 \text{ or } d(m, m_{i,1}) \geq 2 \theta + \kappa + 1}.
  %  \end{align*}

    By construction,
    \begin{equation}
    \label{eq:there-is-a-rho-kappa-theta-tiling:apart}
      \ForEach m \in M_i \ForEach m' \in M_i \Holds \parens[\big]{m \neq m' \implies d(m, m') \geq 2 \theta + \kappa + 1},
    \end{equation}
    and
    \begin{equation}
    \label{eq:there-is-a-rho-kappa-theta-tiling:base-point}
      \ForEach m \in M_{i,1} \Exists m' \in M_i \SuchThat d(m, m') \leq 2 \theta + \kappa,
    \end{equation}
    and, for each $i' \in \N_0$ with $i' \neq i$, because $M_i \subseteq M_{i,1}$ and $M_{i'} \subseteq M_{i',1}$, and $M$ is infinite, according to \cite[Corollary~5.18]{wacker:growth:2017},
    \begin{equation}
    \label{eq:there-is-a-rho-kappa-theta-tiling:sets-apart}
      d(M_i, M_{i'}) \geq d(M_{i,1}, M_{i',1}) \geq 2 \theta + \kappa + 1.
    \end{equation}

    Let $T = \bigcup_{i \in \N_0} M_i$. Because, for each $i \in \N_0$, the set $M_i$ is finite, the set $T$ is countable. And, because $\sequence{M_{i,1}}_{i \in \N_0}$ is pairwise disjoint, so is $\sequence{M_i}_{i \in \N_0}$ and hence $T$ is infinite. % TODO Hier geht auch ein, dass M_i nicht-leer ist.
    Thus, $T$ is countably infinite.

    \proofpart{Subproof of: $\family{\ball(t, \theta)}_{t \in T}$ is pairwise at least $\kappa + 1$ apart}
    Let $t$, $t' \in T$ such that $t \neq t'$. If there is an $i \in \N_0$ such that $t$, $t' \in M_i$, then, according to \cref{eq:there-is-a-rho-kappa-theta-tiling:apart}, we have $d(t, t') \geq 2 \theta + \kappa + 1$. Otherwise, there are $i$, $i' \in \N_0$ with $i \neq i'$ such that $t \in M_i$ and $t' \in M_{i'}$, and then, according to \cref{eq:there-is-a-rho-kappa-theta-tiling:sets-apart}, we have $d(t, t') \geq d(M_i, M_{i'}) \geq 2 \theta + \kappa + 1$. In conclusion, in both cases, according to \cite[Lemma~5.20]{wacker:growth:2017}, we have $d(\ball(t, \theta), \ball(t', \theta)) \geq \kappa + 1$.

    \proofpart{Subproof of: $\family{\ball(t, 4 \theta + 2 \kappa)}_{t \in T}$ is a cover of $M$ (see \cref{fig:there-is-a-rho-kappa-theta-tiling})}
    Let $m \in M$. Then, there is an $i \in \N_0$ such that $i (2 \theta + \kappa + 1) \leq \absoluteValueOf{m} < (i + 1) (2 \theta + \kappa + 1)$. Hence, according to \cite[Lemma~5.17]{wacker:growth:2017},
    \begin{multline*}
      d(m, M_{i,1}) = d(M_{i,1}, m) \leq d(m_0, m) - i (2 \theta + \kappa + 1)\\
          < (i + 1) (2 \theta + \kappa + 1) - i (2 \theta + \kappa + 1) = 2 \theta + \kappa + 1.
    \end{multline*}
    Thus, $d(m, M_{i,1}) \leq 2 \theta + \kappa$. Therefore, there is an $m' \in M_{i,1}$ such that $d(m, m') \leq 2 \theta + \kappa$. Moreover, according to \cref{eq:there-is-a-rho-kappa-theta-tiling:base-point}, there is an $m'' \in M_i$ such that $d(m', m'') \leq 2 \theta + \kappa$. Thus,
    \begin{equation*}
      d(m, m'') \leq d(m, m') + d(m', m'') \leq 4 \theta + 2 \kappa.
    \end{equation*}
    Hence, $m \in \ball(m'', 4 \theta + 2 \kappa)$. Therefore, because $m'' \in T$, we have $m \in \bigcup_{t \in T} \ball(t, 4 \theta + 2 \kappa)$. In conclusion, $\bigcup_{t \in T} \ball(t, 4 \theta + 2 \kappa) = M$. % In conclusion, $\family{\ball(t, 4 \theta + 2 \kappa)}_{t \in T}$ is a cover of $M$.
  \end{proof}

  One by one forbidding subpatterns with similar domains that are far enough apart in a set of finite patterns, decreases its size by at least a multiplicative constant between $0$ and $1$ in each step. In other words, the number of finite patterns with a fixed domain, excluding those in which some subpatterns with similar domains that are far enough apart are embedded, is bounded above by some constant between $0$ and $1$ raised to the power of the number of forbidden subpatterns times the number of all finite patterns with the fixed domain, which is shown in % TODO forbidding, forbidden are not meant in the sense defined above here. Are there better terms?

  \begin{lemma} % TODO Bezeichnung S ist ungeschickt, da zu Beginn des Papers global "reserviert" für Erzeugendensystem! (in den nachfolgenden Lemmata tritt dasselbe Problem auf)
  \label{lem:technical-inequality-for-theorem-entropy-less-if-real-subsets-for-rho-kappa-theta-tiling}
    Let $X$ be a non-empty and $\kappa$-strongly irreducible subshift of $Q^M$, let $F$ be a finite subset of $M$, let $\theta$ be a non-negative integer, let $T$ be a subset of $M$ such that the family $\family{\ball(t, \theta)}_{t \in T}$ is pairwise at least $\kappa + 1$ apart, and, for each element $t \in T$, let $p_t$ be a pattern of $X_{\ball(t, \theta)}$. Furthermore, let $\xi$ be the positive integer $\absoluteValueOf{X_{\ball(\theta)^{+\kappa}}}$, let $S$ be the finite set $T \cap F^{-(\theta + \kappa)}\ (= \setOf{t \in T \suchThat \ball(t, \theta)^{+\kappa} \subseteq F})$, and, for each element $s \in S$, let $\pi_s$ be the map $X_F \to X_{\ball(s, \theta)}$, $p \mapsto p\restrictedTo_{\ball(s, \theta)}$ (see \cref{fig:technical-inequality-for-theorem-entropy-less-if-real-subsets-for-rho-kappa-theta-tiling}).
    \begin{figure}%[bt]
      \centering
      \begin{minipage}[c]{\textwidth / 2} % http://tex.stackexchange.com/questions/60087/how-to-write-content-next-to-tikz-figures
        \begin{tikzpicture}[circle dotted/.style = {dash pattern = on .05mm off 1.5pt, line cap = round}] % http://tex.stackexchange.com/questions/52848/tikz-line-with-large-dots
          % http://tex.stackexchange.com/questions/47178/what-is-the-preferred-way-of-defining-a-tikz-constant
          \pgfmathsetmacro\t{0.3} % \theta
          \pgfmathsetmacro\k{0.1} % \kappa
          \pgfmathsetmacro\xFi{1 - 0.5} % x position of F
          \pgfmathsetmacro\yFi{1 - 0.5} % y position of F
          \pgfmathsetmacro\wFi{2.7} % width of F
          \pgfmathsetmacro\hFi{3} % height of F

          \foreach \x in {0, ..., 4} {
            \foreach \y in {0, ..., 4} {
              \path[fill] (\x, \y) circle (1pt); % t
              \draw (\x - \t, \y - \t) rectangle (\x + \t, \y + \t); % \ball(t, \theta)
              \draw[dashdotted] (\x - \t - \k, \y - \t - \k) rectangle (\x + \t + \k, \y + \t + \k); % \ball(t, \theta + \kappa) % TODO It may be misleading that the boundaries of \ball(t, \theta)^{+ \kappa} do not overlap. Because they are only \kappa + 1 apart, they may overlap!
            }
          }
          \draw[dashed, pattern = north east lines] (\xFi, \yFi) rectangle (\xFi + \wFi, \yFi + \hFi); % F and F^*
          \foreach \x in {1, ..., 2} {
            \foreach \y in {1, ..., 3} {
              \draw[fill = white] (\x - \t, \y - \t) rectangle (\x + \t, \y + \t); % \ball(s, \theta)
              \draw (\x, \y) circle (1pt); % s
            }
          }
          \draw[line width = 0.75pt, circle dotted] (\xFi + \t + \k, \yFi + \t + \k) rectangle (\xFi + \wFi - \t - \k, \yFi + \hFi - \t - \k); % F^{-(\theta + \kappa)}
        \end{tikzpicture}
      \end{minipage}%
      \begin{minipage}[c]{\textwidth / 2} % TODO Denote \kappa in the diagram
        The whole space is $M$; the dots and circles are the elements of the set $T$; for each element $t \in T$, the region enclosed by the rectangle with solid border about $t$ is the set $\ball(t, \theta)$ and the region enclosed by the rectangle with dash-dotted border about $t$ is the set $\ball(t, \theta)^{+\kappa}$; the region enclosed by the rectangle with dashed border is $F$; the region enclosed by the rectangle with dotted border is $F^{-(\theta + \kappa)}$; the circles are the elements of $S = T \cap F^{-(\theta + \kappa)}$; the hatched region is the set $F \smallsetminus (\bigcup_{s \in S} \ball(s, \theta))$. % TODO Dieser "hatched region" kommt nur indirekt in der Aussage vor, da wir die Muster auf den Kugeln \ball(s, \theta) fest vorgeben und nur im umgebenden Bereich Freiheiten belassen
      \end{minipage}
      \caption{Schematic representation of the set-up of \cref{lem:technical-inequality-for-theorem-entropy-less-if-real-subsets-for-rho-kappa-theta-tiling}.}
      \label{fig:technical-inequality-for-theorem-entropy-less-if-real-subsets-for-rho-kappa-theta-tiling}
    \end{figure}
    Then, % TODO wir benötigen nur jene p_t mit t \in S
    \begin{equation*}
      \absoluteValueOf{X_F \smallsetminus \bigcup_{s \in S} \pi_s^{-1}(p_s)}
      \leq
      (1 - \xi^{-1})^{\absoluteValueOf{S}} \cdot \absoluteValueOf{X_F}. %\qedhere
    \end{equation*}
  \end{lemma}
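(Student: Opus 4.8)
The plan is to forbid the subpatterns $p_s$ one centre $s \in S$ at a time and to show that each prohibition shrinks the number of surviving patterns by a factor of at least $1 - \xi^{-1}$. Enumerate $S = \setOf{s_1, s_2, \dotsc, s_n}$, where $n = \absoluteValueOf{S}$, and for each $k \in \setOf{0, 1, \dotsc, n}$ put $B_k = X_F \smallsetminus \bigcup_{i = 1}^{k} \pi_{s_i}^{-1}(p_{s_i})$, so that $B_0 = X_F$ and $B_n$ is the set whose cardinality must be bounded. Because $B_k = B_{k-1} \smallsetminus \pi_{s_k}^{-1}(p_{s_k})$, it suffices to prove $\absoluteValueOf{B_{k-1} \cap \pi_{s_k}^{-1}(p_{s_k})} \geq \xi^{-1} \absoluteValueOf{B_{k-1}}$ for each $k$; the asserted bound then follows from the telescoping estimate $\absoluteValueOf{B_k} \leq (1 - \xi^{-1}) \absoluteValueOf{B_{k-1}}$ and induction on $k$.

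To establish this lower bound I would construct a map $\Phi \from B_{k-1} \to B_{k-1} \cap \pi_{s_k}^{-1}(p_{s_k})$ that is at most $\xi$-to-one. Given $p \in B_{k-1}$, the patterns $p_{s_k} \in X_{\ball(s_k, \theta)}$ and $p\restrictedTo_{F \smallsetminus \ball(s_k, \theta)^{+\kappa}}$ are both finite and allowed in $X$ (the latter as a restriction of the allowed pattern $p$), and their domains are at least $\kappa + 1$ apart, since every cell outside $\ball(s_k, \theta)^{+\kappa}$ is at distance at least $\kappa + 1$ from $\ball(s_k, \theta)$. Hence, by $\kappa$-strong irreducibility (\cref{def:kappa-strongly-irreducible}), there is a point $z \in X$ restricting to $p_{s_k}$ on $\ball(s_k, \theta)$ and to $p$ on $F \smallsetminus \ball(s_k, \theta)^{+\kappa}$; I set $\Phi(p) = z\restrictedTo_F$, choosing one such $z$ per $p$ (when $F \smallsetminus \ball(s_k, \theta)^{+\kappa}$ is empty, $z$ comes directly from the allowedness of $p_{s_k}$).

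It then remains to verify the two properties of $\Phi$. That the image lies in $B_{k-1} \cap \pi_{s_k}^{-1}(p_{s_k})$ uses that the balls $\ball(s_i, \theta)$, $i < k$, are disjoint from $\ball(s_k, \theta)^{+\kappa}$ — again by the pairwise $\kappa + 1$ apartness — and hence contained in $F \smallsetminus \ball(s_k, \theta)^{+\kappa}$, so that $\Phi(p)$ agrees with $p$ there and therefore still differs from $p_{s_i}$ on $\ball(s_i, \theta)$, while by construction $\Phi(p)$ equals $p_{s_k}$ on $\ball(s_k, \theta)$. For the fibre bound, any two elements of $\Phi^{-1}(r)$ coincide with $r$ off $\ball(s_k, \theta)^{+\kappa}$ and are thus determined by their restriction to $\ball(s_k, \theta)^{+\kappa}$; since these restrictions lie in $X_{\ball(s_k, \theta)^{+\kappa}}$ and $\absoluteValueOf{X_{\ball(s_k, \theta)^{+\kappa}}} = \absoluteValueOf{X_{\ball(\theta)^{+\kappa}}} = \xi$ by the translation-invariance of pattern counts (\cref{rem:pattern-belongs-to-shift-if-and-only-if-translated-pattern-belongs}), each fibre has at most $\xi$ elements. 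Consequently $\absoluteValueOf{B_{k-1}} \leq \xi \cdot \absoluteValueOf{B_{k-1} \cap \pi_{s_k}^{-1}(p_{s_k})}$, which is exactly the desired inequality.

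The main obstacle is the surgery step: one must impose $p_{s_k}$ on the inner ball while retaining the original pattern far away and, crucially, on every other ball $\ball(s_i, \theta)$. The buffer annulus $\ball(s_k, \theta)^{+\kappa} \smallsetminus \ball(s_k, \theta)$ of width $\kappa$ is precisely what strong irreducibility needs in order to reconcile the fixed interior with the retained exterior, and the pairwise $\kappa + 1$ apartness of $\family{\ball(t, \theta)}_{t \in T}$ simultaneously supplies the apartness hypothesis of strong irreducibility and the disjointness that keeps the earlier constraints intact. Only the degenerate case $F = \ball(s_k, \theta)^{+\kappa}$ requires a separate word, but there the exterior is empty and the fibre bound holds trivially because $\absoluteValueOf{B_{k-1}} \leq \absoluteValueOf{X_F} \leq \xi$.
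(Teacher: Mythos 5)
Your proposal is correct and follows essentially the same route as the paper: enumerate $S$, forbid one $p_{s_k}$ at a time, and at each step use $\kappa$-strong irreducibility across the collar $\ball(s_k,\theta)^{+\kappa}\smallsetminus\ball(s_k,\theta)$ to show that the fibre $\pi_{s_k}^{-1}(p_{s_k})$ meets the surviving set in proportion at least $\xi^{-1}$. Your ``at most $\xi$-to-one map $\Phi$'' is just a repackaging of the paper's two inequalities $\absoluteValueOf{Z}\leq\xi\cdot\absoluteValueOf{Z_{F\smallsetminus B^{+\kappa}}}$ and $\absoluteValueOf{Z_{F\smallsetminus B^{+\kappa}}}\leq\absoluteValueOf{\pi_{s_{\vartheta+1}}^{-1}(p_{s_{\vartheta+1}})\cap Z}$, so the two arguments coincide.
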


  \begin{usage-note}
    In the proof, $\kappa$-strong irreducibility is used to extend an in $X$ allowed $F \smallsetminus \ball(s, \theta)^{+\kappa}$-pattern by the $\ball(s, \theta)$-pattern $p_s$ and a $\boundaryOf_\kappa^+ \ball(s, \theta)$-pattern to an in $X$ allowed $F$-pattern. %embed an in $X$ allowed $F \smallsetminus \ball(s, \theta)^{+\kappa}$-pattern and the pattern $p_s$ into an in $X$ allowed $F$-pattern.
  \end{usage-note}

  \begin{proof-sketch}
    Let $\family{s_j}_{j \in \setOf{1, 2, \dotsc, \absoluteValueOf{S}}}$ be an enumeration of $S$, let $Z_0 = X_F$, and, for each $\vartheta \in \setOf{0, 1, \dotsc, \absoluteValueOf{S} - 1}$, let $Z_{\vartheta + 1} = Z_\vartheta \smallsetminus \parens{\pi_{s_{\vartheta + 1}}^{-1}(p_{s_{\vartheta + 1}}) \cap Z_\vartheta}$. Furthermore, let $\vartheta \in \setOf{0, 1, \dotsc, \absoluteValueOf{S} - 1}$. Then, $\absoluteValueOf{Z_\vartheta} \leq \absoluteValueOf{X_{\ball(s_{\vartheta + 1}, \theta)^{+ \kappa}}} \cdot \absoluteValueOf{(Z_\vartheta)_{F \smallsetminus \ball(s_{\vartheta + 1}, \theta)^{+ \kappa}}} = \xi \cdot \absoluteValueOf{(Z_\vartheta)_{F \smallsetminus \ball(s_{\vartheta + 1}, \theta)^{+ \kappa}}}$. And, because $X$ is $\kappa$-strongly irreducible, each pattern of $(Z_\vartheta)_{F \smallsetminus \ball(s_{\vartheta + 1}, \theta)^{+ \kappa}}$ can be extended by $p_{s_{\vartheta + 1}}$ and a pattern with domain $\ball(s_{\vartheta + 1}, \theta)^{+ \kappa} \smallsetminus \ball(s_{\vartheta + 1}, \theta)$ to a pattern of $\pi_{s_{\vartheta + 1}}^{-1}(p_{s_{\vartheta + 1}}) \cap Z_\vartheta$ and thus $\absoluteValueOf{(Z_\vartheta)_{F \smallsetminus \ball(s_{\vartheta + 1}, \theta)^{+ \kappa}}} \leq \absoluteValueOf{\pi_{s_{\vartheta + 1}}^{-1}(p_{s_{\vartheta + 1}}) \cap Z_\vartheta}$. Hence, $\absoluteValueOf{\pi_{s_{\vartheta + 1}}^{-1}(p_{s_{\vartheta + 1}}) \cap Z_\vartheta} \geq \xi^{-1} \cdot \absoluteValueOf{Z_\vartheta}$. Therefore,
%    \begin{align*}
      $\absoluteValueOf{Z_{\vartheta + 1}}
      =    \absoluteValueOf{Z_\vartheta} - \absoluteValueOf{\pi_{s_{\vartheta + 1}}^{-1}(p_{s_{\vartheta + 1}}) \cap Z_\vartheta}
      \leq (1 - \xi^{-1}) \cdot \absoluteValueOf{Z_\vartheta}$.
%    \end{align*}
    The statement follows by induction.
  \end{proof-sketch}

  \begin{proof}
    As claimed, because $X \neq \emptyset$, the integer $\xi$ is positive; because, according to \cite[Corollary~5.14]{wacker:growth:2017} and \cite[Item~2 of Corollary~6.3]{wacker:growth:2017}, $s \isSemiActedUponBy \ball(\theta + \kappa) = \ball(s, \theta)^{+\kappa}$, we have $S = T \cap F^{-(\theta + \kappa)} = \setOf{t \in T \suchThat \ball(t, \theta)^{+\kappa} \subseteq F}$; and, because $S \subseteq \bigcup_{s \in S} \ball(s, \theta)^{+\kappa} \subseteq F$ and $F$ is finite, the set $S$ is finite.

    Let $\family{B_s}_{s \in S} = \family{\ball(s, \theta)}_{s \in S}$, let $\family{s_j}_{j \in \setOf{1, 2, \dotsc, \absoluteValueOf{S}}}$ be an enumeration of $S$, % TODO define "enumeration" --- see https://proofwiki.org/wiki/Definition:Enumeration
    and, for each $\vartheta \in \setOf{0, 1, \dotsc, \absoluteValueOf{S}}$, let
    \begin{equation*}
      Z_\vartheta = X_F \smallsetminus \bigcup_{j = 1}^\vartheta \pi_{s_j}^{-1}(p_{s_j})
       \ \parens*{= \setOf{p \in X_F \suchThat \ForEach j \in \setOf{1, 2, \dotsc, \vartheta} \Holds p\restrictedTo_{B_{s_j}} \neq p_{s_j}}}.
    \end{equation*}
    To establish the claim, we prove by induction on $\vartheta$, that, for each $\vartheta \in \setOf{0, 1, \dotsc, \absoluteValueOf{S}}$,
    \begin{equation}
    \label{eq:entropy-less-if-real-subsets-for-rho-kappa-theta-tiling:inductive-hypothesis}
      \absoluteValueOf{Z_\vartheta}
      \leq
      (1 - \xi^{-1})^\vartheta \cdot \absoluteValueOf{X_F}.
    \end{equation}

    \proofpart{Base Case}
      Let $\vartheta = 0$. Then, because $\bigcup_{j = 1}^0 \pi_{s_j}^{-1}(p_{s_j}) = \emptyset$ and $(1 - \xi^{-1})^0 = 1$, \cref{eq:entropy-less-if-real-subsets-for-rho-kappa-theta-tiling:inductive-hypothesis} holds. Note that $0^0 = 1$.

    \proofpart{Inductive Step} % TODO Better notation for set \setOf{1,2,\dotsc,k} ... Maybe \N_{\leq k} or [1..k] ... See also http://math.stackexchange.com/questions/521437/standard-notation-for-the-set-of-integers-0-1-n-1
      Let $\vartheta \in \setOf{0, 1, \dotsc, \absoluteValueOf{S} - 1}$ such that \cref{eq:entropy-less-if-real-subsets-for-rho-kappa-theta-tiling:inductive-hypothesis}, called \emph{inductive hypothesis}, holds. Furthermore, let $Z = Z_\vartheta$. Because $B_{s_{\vartheta + 1}}^{+\kappa} \subseteq F$, % TODO Abkürzung Z = Z_\vartheta unschön, aber ansonsten haben wir Doppelindizes. Außer wir schrieben $\pi_A(Z)$ anstelle von $Z_A$. Aber das sollten wir wenn dann überall tun, oder?
      \begin{equation*}
        Z \subseteq Z_{B_{s_{\vartheta + 1}}^{+\kappa}} \times Z_{F \smallsetminus B_{s_{\vartheta + 1}}^{+\kappa}}
          \subseteq X_{B_{s_{\vartheta + 1}}^{+\kappa}} \times Z_{F \smallsetminus B_{s_{\vartheta + 1}}^{+\kappa}}.
      \end{equation*}
      Hence, $\absoluteValueOf{Z} \leq \absoluteValueOf{X_{B_{s_{\vartheta + 1}}^{+\kappa}}} \cdot \absoluteValueOf{Z_{F \smallsetminus B_{s_{\vartheta + 1}}^{+\kappa}}}$. % TODO State lem:properties-of-interior-closure-and-boundary for k-interior, k-closure, etc. and use that instead
  %      Because $B_{s_1} = s_1 \isSemiActedUponBy \ball(\theta)$, and $\actsOnPoint$ and $\isSemiActedUponBy$ semi-commute, there is a $g_0 \in G_0$ such that $g_{m_0, s_1}^{-1} \actsOnPoint B_{s_1} = (g_{m_0, s_1}^{-1} \actsOnPoint s_1) \isSemiActedUponBy g_0 \cdot \ball(\theta) = m_0 \isSemiActedUponBy \ball(\theta) = \ball(\theta)$.
      Moreover, according to \cref{rem:pattern-belongs-to-shift-if-and-only-if-translated-pattern-belongs}, we have $\absoluteValueOf{X_{B_{s_{\vartheta + 1}}^{+\kappa}}} = \absoluteValueOf{X_{\ball(\theta)^{+\kappa}}} = \xi$ (where we used that $B_{s_{\vartheta + 1}}^{+\kappa} = s_{\vartheta + 1} \isSemiActedUponBy \ball(\theta)^{+\kappa}$, which holds according to \cite[Corollary~5.14]{wacker:growth:2017} and \cite[Item~2 of Corollary~6.3]{wacker:growth:2017}). Therefore, $\absoluteValueOf{Z} \leq \xi \cdot \absoluteValueOf{Z_{F \smallsetminus B_{s_{\vartheta + 1}}^{+\kappa}}}$.

      Let $p \in Z_{F \smallsetminus B_{s_{\vartheta + 1}}^{+\kappa}}$. Then, $p \in X_{F \smallsetminus B_{s_{\vartheta + 1}}^{+\kappa}}$. Moreover, according to \cite[Lemma~6.5]{wacker:growth:2017}, we have $d(B_{s_{\vartheta + 1}}, F \smallsetminus B_{s_{\vartheta + 1}}^{+\kappa}) \geq \kappa + 1$. Hence, because $X$ is $\kappa$-strongly irreducible, there is a $p'' \in X_F$ such that $p''\restrictedTo_{\domainOf(p)} = p$ and $p''\restrictedTo_{\domainOf(p_{s_{\vartheta + 1}})} = p_{s_{\vartheta + 1}}$. % TODO p'' = c\restrictedTo_F, where c is from the definition of k-strongly irreducible. % TODO Note that if $F \smallsetminus B_{s_{\vartheta + 1}}^{+\kappa}$ is empty, then \absoluteValueOf{Z_{F \smallsetminus B_{s_{\vartheta + 1}}^{+\kappa}}} = 1 and d(B_{s_{\vartheta + 1}}, F \smallsetminus B_{s_{\vartheta + 1}}^{+\kappa}) = \infty.
      Furthermore, because $\family{\ball(t, \theta)}_{t \in T}$ is pairwise at least $\kappa + 1$ apart, for each $j \in \setOf{1, 2, \dotsc, \vartheta}$, we have $B_{s_j} \subseteq F \smallsetminus B_{s_{\vartheta + 1}}^{+\kappa}$. Therefore, for each $j \in \setOf{1, 2, \dotsc, \vartheta}$, we have $p''\restrictedTo_{B_{s_j}} = p\restrictedTo_{B_{s_j}} \neq p_{s_j}$ and hence $p'' \notin \pi_{s_j}^{-1}(p_{s_j})$. Thus, $p'' \in Z$. Moreover, $p'' \in \pi_{s_{\vartheta + 1}}^{-1}(p_{s_{\vartheta + 1}})$. Therefore, $\absoluteValueOf{Z_{F \smallsetminus B_{s_{\vartheta + 1}}^{+\kappa}}} \leq \absoluteValueOf{\pi_{s_{\vartheta + 1}}^{-1}(p_{s_{\vartheta + 1}}) \cap Z}$. % TODO Genauere Begründung für $B_{s_j} \subseteq F \smallsetminus B_{s_{\vartheta + 1}}^{+\kappa}$. .... Vielleicht: see \cref{fig:technical-inequality-for-theorem-entropy-less-if-real-subsets-for-rho-kappa-theta-tiling} % TODO Vielleicht auslagern? Siehe lem:whatever-xxx % TODO This paragraph essentially says, that each element of Z_{F \smallsetminus B_{s_{\vartheta + 1}}^{+\kappa}} can be extended such that the extension is an element of \pi_{s_{\vartheta + 1}}^{-1}(p_{s_{\vartheta + 1}}) \cap Z  .... in other words, there is an injection from Z_{F \smallsetminus B_{s_{\vartheta + 1}}^{+\kappa}} to \pi_{s_{\vartheta + 1}}^{-1}(p_{s_{\vartheta + 1}}) \cap Z

      Together,
      \begin{equation*}
        \absoluteValueOf{Z} \leq \xi \cdot \absoluteValueOf{\pi_{s_{\vartheta + 1}}^{-1}(p_{s_{\vartheta + 1}}) \cap Z}.
      \end{equation*}
      Because $Z_{\vartheta + 1} = Z \smallsetminus \pi_{s_{\vartheta + 1}}^{-1}(p_{s_{\vartheta + 1}})$,
      \begin{align*}
        \absoluteValueOf{Z_{\vartheta + 1}}
        &=    \absoluteValueOf{Z \smallsetminus \pi_{s_{\vartheta + 1}}^{-1}(p_{s_{\vartheta + 1}})}\\
        &=    \absoluteValueOf{Z \smallsetminus (\pi_{s_{\vartheta + 1}}^{-1}(p_{s_{\vartheta + 1}}) \cap Z)}\\ % TODO mention usage of inequality above for next inequality ... we also use that the right side of the set difference is a subset of the left side
        &=    \absoluteValueOf{Z} - \absoluteValueOf{\pi_{s_{\vartheta + 1}}^{-1}(p_{s_{\vartheta + 1}}) \cap Z}\\
        &\leq \absoluteValueOf{Z} - \xi^{-1} \cdot \absoluteValueOf{Z}\\
        &=    (1 - \xi^{-1}) \cdot \absoluteValueOf{Z}.
      \end{align*}
      Hence, according to the inductive hypothesis,
      \begin{align*}
        \absoluteValueOf{Z_{\vartheta + 1}}
        &\leq (1 - \xi^{-1}) \cdot (1 - \xi^{-1})^\vartheta \cdot \absoluteValueOf{X_F}\\
        &=    (1 - \xi^{-1})^{\vartheta + 1} \cdot \absoluteValueOf{X_F}.
      \end{align*}

    In conclusion, according to the principle of mathematical induction, for each $\vartheta \in \setOf{0, 1, \dotsc, \absoluteValueOf{S}}$, \cref{eq:entropy-less-if-real-subsets-for-rho-kappa-theta-tiling:inductive-hypothesis} holds.
  \end{proof}

  The number of elements in a finite set is bounded above by the number of elements its interior shares with a tiling times the number of elements of a big enough ball plus the number of elements of a big enough boundary of the finite set, which is shown in

  \begin{lemma} % let $T$ be a $\ntuple{\theta, 2 \kappa, \theta'}$-tiling of $\mathcal{R}$ % TODO Tatsächlich spielt \kappa keine Rolle. Man könnte einfach \theta + \kappa mit xxx bezeichnen. Aber: In der jetzigen Formulierung passt es genau auf die Anwendung in thm:entropy-less-if-real-subsets-for-rho-kappa-theta-tiling.
  \label{lem:set-contained-in-unition-of-balls-and-of-interior-of-set}
    Let $F$ be a finite subset of $M$, let $\theta$, $\kappa$, and $\theta'$ be three non-negative integers, let $T$ be a subset of $M$ such that $\family{\ball(t, \theta')}_{t \in T}$ is a cover of $M$, and let $S$ be the finite set $T \cap F^{-(\theta + \kappa)}\ (= \setOf{t \in T \suchThat \ball(t, \theta)^{+\kappa} \subseteq F})$ (see \cref{fig:set-contained-in-unition-of-balls-and-of-interior-of-set}).
    \begin{figure}%[bt]
      \centering
      \begin{minipage}[c]{\textwidth / 2} % http://tex.stackexchange.com/questions/60087/how-to-write-content-next-to-tikz-figures
        \begin{tikzpicture}[circle dotted/.style = {dash pattern = on .05mm off 1.5pt, line cap = round}] % http://tex.stackexchange.com/questions/52848/tikz-line-with-large-dots
          % http://tex.stackexchange.com/questions/47178/what-is-the-preferred-way-of-defining-a-tikz-constant
          \pgfmathsetmacro\t{0.4} % \theta + \kappa
          \pgfmathsetmacro\tp{0.7} % \theta'
          \pgfmathsetmacro\xF{1 - 0.55} % x position of F
          \pgfmathsetmacro\yF{1 - 0.55} % y position of F
          \pgfmathsetmacro\wF{2.8} % width of F
          \pgfmathsetmacro\hF{3.1} % height of F

          \foreach \x in {1, ..., 2} {
            \foreach \y in {1, ..., 3} {
              \draw (\x, \y) circle (1pt);
            }
          }
          \foreach \x in {0, 3, 4} {
            \foreach \y in {0, ..., 4} {
              \fill (\x, \y) circle (1pt);
            }
          }
          \foreach \x in {1, 2} {
            \foreach \y in {0, 4} {
              \fill (\x, \y) circle (1pt);
            }
          }
          \foreach \x in {0, ..., 4} {
            \foreach \y in {0, ..., 4} {
              \draw (\x - \t, \y - \t) rectangle (\x + \t, \y + \t); % \ball(t, \theta + \kappa)
              \draw[gray, dashdotted] (\x - \tp, \y - \tp) rectangle (\x + \tp, \y + \tp); % \ball(t, \theta')
            } % TODO Es wäre schön, wenn die Mengen $\ball(s, \theta')$ für $s \in S$ hervorgehoben wären
          }
          \draw[line width = 0.5pt, dashed] (\xF, \yF) rectangle (\xF + \wF, \yF + \hF); % F
          \draw[line width = 0.75pt, circle dotted] (\xF + \t, \yF + \t) rectangle (\xF + \wF - \t, \yF + \hF - \t); % F^{-(\theta + \kappa)} % densely dotted
          \draw[line width = 0.75pt, circle dotted] (\xF + \t + \tp, \yF + \t + \tp) rectangle (\xF + \wF - \t - \tp, \yF + \hF - \t - \tp); % F^{-(\theta + \kappa + \theta')} % densely dotted
        \end{tikzpicture}
      \end{minipage}%
      \begin{minipage}[c]{\textwidth / 2}
        The whole space is $M$; the dots and circles are the elements of the set $T$; for each element $t \in T$, the region enclosed by the rectangle with solid border about $t$ is the set $\ball(t, \theta)^{+\kappa}$ and the region enclosed by the rectangle with dash-dotted border about $t$ is the set $\ball(t, \theta')$; the region enclosed by the rectangle with dashed border is $F$; the region enclosed by the smallest rectangle with dotted border is $F^{-(\theta + \kappa + \theta')}$ and the region enclosed by the largest rectangle with dotted border is $F^{-(\theta + \kappa)}$; the circles are the elements of $S = T \cap F^{-(\theta + \kappa)}$.
      \end{minipage}
      \caption{Schematic representation of the set-up of \cref{lem:set-contained-in-unition-of-balls-and-of-interior-of-set}.}
      \label{fig:set-contained-in-unition-of-balls-and-of-interior-of-set}
    \end{figure}
    Then,
    \begin{equation*}
  %    F \subseteq \parens[\big]{\bigcup_{s \in S} \ball(s, \theta')} \cup \parens[\big]{F \smallsetminus F^{-(\theta + \kappa + \theta')}}. %\qedhere
      \absoluteValueOf{F} \leq \absoluteValueOf{S} \cdot \absoluteValueOf{\ball(\theta')} + \absoluteValueOf{\boundaryOf_{\theta + \kappa + \theta'}^- F}. %\qedhere
    \end{equation*}%
  \end{lemma}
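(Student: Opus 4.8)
The plan is to split $F$ into its deep $(\theta + \kappa + \theta')$-interior and the corresponding internal boundary, to bound the interior by the balls of radius $\theta'$ centred at the points of $S$, and to carry the boundary along as the error term. Since $F^{-(\theta + \kappa + \theta')} \subseteq F$, we have the disjoint decomposition $F = F^{-(\theta + \kappa + \theta')} \disjointUnionWith \boundaryOf_{\theta + \kappa + \theta'}^- F$, so $\absoluteValueOf{F} = \absoluteValueOf{F^{-(\theta + \kappa + \theta')}} + \absoluteValueOf{\boundaryOf_{\theta + \kappa + \theta'}^- F}$. It therefore suffices to prove the single inequality $\absoluteValueOf{F^{-(\theta + \kappa + \theta')}} \leq \absoluteValueOf{S} \cdot \absoluteValueOf{\ball(\theta')}$.

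The heart of the argument is the containment $F^{-(\theta + \kappa + \theta')} \subseteq \bigcup_{s \in S} \ball(s, \theta')$. To establish it, I would take a cell $m \in F^{-(\theta + \kappa + \theta')}$, so that $\ball(m, \theta + \kappa + \theta') \subseteq F$ by definition of the interior. Because $\family{\ball(t, \theta')}_{t \in T}$ covers $M$, there is a $t \in T$ with $d(t, m) \leq \theta'$, that is $m \in \ball(t, \theta')$. The key step is to verify that this $t$ actually lies in $S$: for any $m' \in \ball(t, \theta + \kappa)$ the triangle inequality gives $d(m, m') \leq d(m, t) + d(t, m') \leq \theta' + (\theta + \kappa)$, whence $m' \in \ball(m, \theta + \kappa + \theta') \subseteq F$; thus $\ball(t, \theta + \kappa) \subseteq F$, which is precisely the statement $t \in F^{-(\theta + \kappa)}$. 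Since $\ball(t, \theta)^{+\kappa} = \ball(t, \theta + \kappa)$ by \cite[Item~2 of Corollary~6.3]{wacker:growth:2017}, this is the characterisation of $S$ recorded in the statement, and together with $t \in T$ it yields $t \in S$. Hence $m \in \ball(s, \theta')$ for some $s \in S$, proving the containment.

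The remaining step is a cardinality estimate by subadditivity: $\absoluteValueOf{F^{-(\theta + \kappa + \theta')}} \leq \absoluteValueOf[\big]{\bigcup_{s \in S} \ball(s, \theta')} \leq \sum_{s \in S} \absoluteValueOf{\ball(s, \theta')}$. Because $\ball(s, \theta') = g_{m_0, s} \actsOnPoint \ball(\theta')$ (apply \cite[Lemma~5.9]{wacker:growth:2017} with $g_{m_0, s} \actsOnPoint m_0 = s$) and $g_{m_0, s} \actsOnPoint \blank$ is a bijection of $M$, every summand equals $\absoluteValueOf{\ball(\theta')}$, so $\sum_{s \in S} \absoluteValueOf{\ball(s, \theta')} = \absoluteValueOf{S} \cdot \absoluteValueOf{\ball(\theta')}$. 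Combining this with the decomposition of the first paragraph gives the claimed bound.

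The main obstacle is the middle step, namely showing that the covering centre $t$ belongs to $S$ rather than merely to $T$; the decomposition and the cardinality estimate are routine bookkeeping. This is exactly the point at which the radius $\theta' $ built into the deep interior and the radius $\theta'$ of the covering balls conspire, through the triangle inequality, to force $\ball(t, \theta + \kappa) \subseteq F$, and where the identification of the $\kappa$-closure of a $\theta$-ball with the $(\theta + \kappa)$-ball is used.
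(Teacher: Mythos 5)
Your proposal is correct and follows essentially the same route as the paper: the paper shows, contrapositively, that any $m \in F$ not covered by $\bigcup_{s \in S} \ball(s, \theta')$ lies in $\boundaryOf_{\theta + \kappa + \theta'}^- F$, using exactly your key observation that a covering centre $t$ within distance $\theta'$ of a point of $F^{-(\theta + \kappa + \theta')}$ satisfies $\ball(t, \theta)^{+\kappa} \subseteq F$ and hence lies in $S$. The only cosmetic differences are the direction of the inclusion argument and that the paper phrases the triangle-inequality step via composed closures $(\setOf{m}^{+\theta'})^{+\theta})^{+\kappa} \subseteq \setOf{m}^{+(\theta+\kappa+\theta')}$ rather than the metric directly.
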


  \begin{proof-sketch}
    For each $m \in F$, if $m \notin S^{+ \theta'}$, then $m \notin F^{-(\theta + \kappa + \theta')}$. Thus, $F \subseteq S^{+ \theta'} \cup F \smallsetminus F^{-(\theta + \kappa + \theta')} = \parens{\bigcup_{s \in S} \ball(s, \theta')} \cup \boundaryOf_{\theta + \kappa + \theta'}^- F$. Hence, $\absoluteValueOf{F} \leq \absoluteValueOf{S} \cdot \absoluteValueOf{\ball(\theta')} + \absoluteValueOf{\boundaryOf_{\theta + \kappa + \theta'}^- F}$.
  \end{proof-sketch}

  \begin{proof}
    Let $m \in F \smallsetminus \bigcup_{s \in S} \ball(s, \theta')$. Because $\family{\ball(t, \theta')}_{t \in T}$ is a cover of $M$, there is a $t \in T$ such that $m \in \ball(t, \theta')$. Because $m \notin \bigcup_{s \in S} \ball(s, \theta')$, we have $t \notin S$ and hence $\ball(t, \theta)^{+\kappa} \nsubseteq F$. Because $m \in \ball(t, \theta')$, we have $d(m, t) \leq \theta'$ and hence $t \in \ball(m, \theta') = \setOf{m}^{+\theta'}$.
    Suppose that $m \in F^{-(\theta + \kappa + \theta')}$. Then, according to \cite[Item~2 of Lemma~6.2]{wacker:growth:2017} and \cite[Item~5 of Lemma~6.4]{wacker:growth:2017}, % TODO We use: If F \subseteq B, then F^{+k} \subseteq B^{+k} (we also do this somewhere above ... however there is no lemmata for that)
    \begin{align*} % TODO Geht das nicht schöner?
      \ball(t, \theta)^{+\kappa}
      &= (\setOf{t}^{+\theta})^{+\kappa}\\
      &\subseteq \parens[\Big]{\parens[\big]{\setOf{m}^{+\theta'}}^{+\theta}}^{+\kappa}\\
      &= \setOf{m}^{+(\theta + \kappa + \theta')}\\
      &\subseteq (F^{-(\theta + \kappa + \theta')})^{+(\theta + \kappa + \theta')}\\
      &= F,
    \end{align*}
    which contradicts that $\ball(t, \theta)^{+\kappa} \nsubseteq F$. Hence, $m \in F \smallsetminus F^{-(\theta + \kappa + \theta')} = \boundaryOf_{\theta + \kappa + \theta'}^- F$. Therefore,
    \begin{equation*}
      F \subseteq \parens[\big]{\bigcup_{s \in S} \ball(s, \theta')} \cup \boundaryOf_{\theta + \kappa + \theta'}^- F.
    \end{equation*}
    Moreover, because $S \subseteq \bigcup_{s \in S} \ball(s, \theta)^{+\kappa} \subseteq F$ and $F$ is finite, the set $S$ is finite. % TODO Das steht genauso im vorherigen Lemma.
    And, for each $s \in S$, according to \cite[Corollary~5.11]{wacker:growth:2017}, we have $\absoluteValueOf{\ball(s, \theta')} = \absoluteValueOf{\ball(\theta')}$. In conclusion,
    \begin{align*}
      \absoluteValueOf{F} &\leq \sum_{s \in S} \absoluteValueOf{\ball(\theta')} + \absoluteValueOf{\boundaryOf_{\theta + \kappa + \theta'}^- F}\\
              &=    \absoluteValueOf{S} \cdot \absoluteValueOf{\ball(\theta')} + \absoluteValueOf{\boundaryOf_{\theta + \kappa + \theta'}^- F}. %\qedhere
    \end{align*}
  \end{proof}

  The number of elements that the components of a right Følner net share with a tiling is asymptotically bounded below away from zero, which is shown in

  \begin{corollary} % 5.6.4 % TODO This is but lem:upper-bound-of-tiling-cap-folner-net
  \label{cor:subshifts-upper-bound-of-tiling-cap-folner-net} % TODO Concrete Example: See Book Fig. 5.7 and my paper on the garden of eden theorem % TODO Dis: right tractable and right Erling net suffices
    Let $\mathcal{R}$ be right amenable, let $\net{F_i}_{i \in I}$ be a right Følner net in $\mathcal{R}$, let $\theta$, $\kappa$, and $\theta'$ be three non-negative integers, let $T$ be a subset of $M$ such that $\family{\ball(t, \theta')}_{t \in T}$ is a cover of $M$. There is a positive real number $\varepsilon \in \R_{> 0}$ and there is an index $i_0 \in I$ such that, for each index $i \in I$ with $i \geq i_0$, we have $\absoluteValueOf{T \cap F_i^{-(\theta + \kappa)}} \geq \varepsilon \absoluteValueOf{F_i}$.
  \end{corollary}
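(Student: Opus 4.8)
The plan is to feed each component $F_i$ of the right Følner net into \cref{lem:set-contained-in-unition-of-balls-and-of-interior-of-set}. Taking $F = F_i$ there, the set $S$ of that lemma is precisely $T \cap F_i^{-(\theta + \kappa)}$, and the hypothesis that $\family{\ball(t, \theta')}_{t \in T}$ is a cover of $M$ is available, so the lemma yields
\begin{equation*}
  \absoluteValueOf{F_i} \leq \absoluteValueOf{T \cap F_i^{-(\theta + \kappa)}} \cdot \absoluteValueOf{\ball(\theta')} + \absoluteValueOf{\boundaryOf_{\theta + \kappa + \theta'}^- F_i}.
\end{equation*}
Since $\ball(\theta')$ contains $m_0$ and is finite (because $\mathcal{R}$ is finitely right generated with finite stabiliser $G_0$), the number $\absoluteValueOf{\ball(\theta')}$ is a fixed positive integer, and I would rearrange the inequality to
\begin{equation*}
  \absoluteValueOf{T \cap F_i^{-(\theta + \kappa)}} \geq \frac{\absoluteValueOf{F_i} - \absoluteValueOf{\boundaryOf_{\theta + \kappa + \theta'}^- F_i}}{\absoluteValueOf{\ball(\theta')}}.
\end{equation*}

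The next step is to control the internal-boundary term appearing in the numerator. Writing $\rho = \theta + \kappa + \theta'$, the internal $\rho$-boundary is contained in the full $\rho$-boundary, since $\boundaryOf_\rho^- F_i = F_i \smallsetminus F_i^{-\rho} \subseteq F_i^{+\rho} \smallsetminus F_i^{-\rho} = \boundaryOf_\rho F_i$ (using $F_i \subseteq F_i^{+\rho}$). Hence $\absoluteValueOf{\boundaryOf_{\theta + \kappa + \theta'}^- F_i} \leq \absoluteValueOf{\boundaryOf_{\theta + \kappa + \theta'} F_i}$. Now the defining property of a right Følner net, applied with the single value $\rho = \theta + \kappa + \theta' \in \N_0$, gives $\lim_{i \in I} \absoluteValueOf{\boundaryOf_{\theta + \kappa + \theta'} F_i} / \absoluteValueOf{F_i} = 0$, so there is an index $i_0 \in I$ with $\absoluteValueOf{\boundaryOf_{\theta + \kappa + \theta'} F_i} / \absoluteValueOf{F_i} \leq 1/2$ for all $i \geq i_0$.

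Finally I would combine the two steps: for each $i \geq i_0$ the numerator satisfies $\absoluteValueOf{F_i} - \absoluteValueOf{\boundaryOf_{\theta + \kappa + \theta'}^- F_i} \geq \absoluteValueOf{F_i} - \tfrac{1}{2}\absoluteValueOf{F_i} = \tfrac{1}{2}\absoluteValueOf{F_i}$, and therefore
\begin{equation*}
  \absoluteValueOf{T \cap F_i^{-(\theta + \kappa)}} \geq \frac{\tfrac{1}{2}\absoluteValueOf{F_i}}{\absoluteValueOf{\ball(\theta')}} = \varepsilon \absoluteValueOf{F_i}, \quad \text{where } \varepsilon = \frac{1}{2 \absoluteValueOf{\ball(\theta')}} > 0,
\end{equation*}
which is exactly the claimed bound. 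I do not expect a serious obstacle: the real content sits in \cref{lem:set-contained-in-unition-of-balls-and-of-interior-of-set}, and the only two points needing care are the mismatch between the Følner condition, which is phrased via the full boundary $\boundaryOf_\rho$, and the internal boundary $\boundaryOf_\rho^-$ delivered by that lemma (resolved by the inclusion above), and the observation that $\absoluteValueOf{\ball(\theta')}$ is finite and nonzero so that the division is legitimate and $\varepsilon$ is a genuine positive real.
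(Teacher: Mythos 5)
Your proposal is correct and follows essentially the same route as the paper's proof: apply \cref{lem:set-contained-in-unition-of-balls-and-of-interior-of-set} to each $F_i$, bound the internal boundary by the full boundary, invoke the Følner condition to make the boundary term at most $\tfrac{1}{2}\absoluteValueOf{F_i}$ eventually, and take $\varepsilon = 1/(2\absoluteValueOf{\ball(\theta')})$. The only cosmetic difference is that the paper cites an external result for the eventual bound on $\absoluteValueOf{\boundaryOf_{\theta+\kappa+\theta'} F_i}/\absoluteValueOf{F_i}$, whereas you read it off directly from the defining limit of a right Følner net; both are fine.
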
 % TODO Proof sketch?

  \begin{proof}
    Let $i \in I$ and let $T_i = T \cap F_i^{-(\theta + \kappa)}$. According to \cref{lem:set-contained-in-unition-of-balls-and-of-interior-of-set},
    \begin{equation*}
      \absoluteValueOf{F_i} \leq \absoluteValueOf{T_i} \cdot \absoluteValueOf{\ball(\theta')} + \absoluteValueOf{\boundaryOf_{\theta + \kappa + \theta'}^- F_i}.
    \end{equation*}
    Thus, because $\boundaryOf_{\theta + \kappa + \theta'}^- F_i \subseteq \boundaryOf_{\theta + \kappa + \theta'} F_i$,
    \begin{equation*}
      \frac{\absoluteValueOf{T_i}}{\absoluteValueOf{F_i}} \geq \frac{1}{\absoluteValueOf{\ball(\theta')}} \cdot \parens*{1 - \frac{\absoluteValueOf{\boundaryOf_{\theta + \kappa + \theta'} F_i}}{\absoluteValueOf{F_i}}}.
    \end{equation*}
    According to \cite[Theorem~10.6]{wacker:growth:2017}, there is an $i_0 \in I$ such that
    \begin{equation*} % TODO Everywhere: Parenthesis around "\Holds ... \implies ..."?
      \ForEach i \in I \Holds \parens*{i \geq i_0 \implies \frac{\absoluteValueOf{\boundaryOf_{\theta + \kappa + \theta'} F_i}}{\absoluteValueOf{F_i}} \leq \frac{1}{2}}.
    \end{equation*}
    Let $\varepsilon = 1 / (2 \absoluteValueOf{\ball(\theta')})$. Then, for each $i \in I$ with $i \geq i_0$,
    \begin{equation*}
      \frac{\absoluteValueOf{T_i}}{\absoluteValueOf{F_i}} \geq \frac{1}{2 \absoluteValueOf{\ball(\theta')}} = \varepsilon. %\qedhere
    \end{equation*}
  \end{proof}

  If a shift space has at least two points, then, for each non-empty domain, it has at least two patterns.

  \begin{lemma}
  \label{lem:subshift-with-at-least-two-points-has-at-least-two-patterns-for-each-domain}
    Let $X$ be a subshift of $Q^M$ such that $\absoluteValueOf{X} \geq 2$ and let $A$ be a non-empty subset of $M$. Then, $\absoluteValueOf{X_A} \geq 2$.
  \end{lemma}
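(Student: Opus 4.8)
The plan is to take the given difference between two points of $X$ and transport it into the domain $A$ by a shift. First I would use $\absoluteValueOf{X} \geq 2$ to pick two distinct points $x$, $x' \in X$; since $x \neq x'$, there is a cell $m \in M$ with $x(m) \neq x'(m)$. Since $A$ is non-empty, I would also fix a cell $a \in A$, at which I aim to manufacture a difference.

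The key step is to move the difference from $m$ to $a$ by an element of $H$. Because $H$ is $\mathcal{K}$-big, it contains every coordinate $g_{m_0, m}$, and hence $H$ acts transitively on $M$: the element $h = g_{m_0, a} g_{m_0, m}^{-1}$ lies in $H$ and satisfies $h \actsOnPoint m = g_{m_0, a} \actsOnPoint m_0 = a$. By \cref{lem:shift-space-is-shift-invariant} the subshift $X$ is shift-invariant, so $h \actsOnMap x$ and $h \actsOnMap x'$ again lie in $X$.

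It then remains to evaluate these shifted points at $a$. Using $(h \actsOnMap c)(a) = c(h^{-1} \actsOnPoint a)$ together with $h^{-1} \actsOnPoint a = m$, I obtain $(h \actsOnMap x)(a) = x(m) \neq x'(m) = (h \actsOnMap x')(a)$. Hence the two restrictions $(h \actsOnMap x)\restrictedTo_A$ and $(h \actsOnMap x')\restrictedTo_A$ differ at $a \in A$, and both belong to $X_A$; therefore $\absoluteValueOf{X_A} \geq 2$.

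The only subtle point — and the main thing to get right — is that shift-invariance of $X$ is guaranteed under the group $H$, not under all of $G$, so I must realise the displacement $m \mapsto a$ by an element of $H$ rather than by an arbitrary group element. This is exactly what $\mathcal{K}$-bigness provides, since it forces $H$ to act transitively on $M$.
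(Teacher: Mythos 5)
Your proposal is correct and follows essentially the same route as the paper's proof: both pick two points differing at a cell $m$, transport the difference to a chosen $a \in A$ via the element $h = g_{m_0, a} g_{m_0, m}^{-1} \in H$ (which exists by $\mathcal{K}$-bigness), and conclude that the two restrictions to $A$ are distinct elements of $X_A$. Your closing remark about needing the displacement to lie in $H$ rather than in all of $G$ is exactly the point the paper's choice of $h$ is designed to handle.
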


  \begin{proof}
    Because $\absoluteValueOf{X} \geq 2$, there are $x$ and $x'$ in $X$ such that $x \neq x'$. Thus, there is an $m \in M$ such that $x(m) \neq x'(m)$. And, because $A$ is non-empty, there is an $a \in A$. The element $h = g_{m_0, a} g_{m_0, m}^{-1}$ is contained in $H$ and satisfies $h^{-1} \actsOnPoint a = m$. Hence, $(h \actsOnMap x)(a) \neq (h \actsOnMap x')(a)$. Therefore, $(h \actsOnMap x)\restrictedTo_A$ and $(h \actsOnMap x')\restrictedTo_A$ are distinct and are contained in $X_A$. In conclusion, $\absoluteValueOf{X_A} \geq 2$.
  \end{proof}

  A subset of a strongly irreducible shift space has less entropy than that space if about each point of a tiling the subset has fewer patterns of a certain radius than the space, which is shown in

  % TODO Kurzes Beispiel dafür, dass $\absoluteValueOf{X} \geq 2$ notwendig ist.
  \begin{theorem} % Paper 4.4 % TODO Confer lem:entorpy-bounded-above-if-strange-tiling-exists
  \label{thm:entropy-less-if-real-subsets-for-rho-kappa-theta-tiling} % TODO Benötigen wir anderswo $\absoluteValueOf{X} \neq 0$? Sollen wir das einfach global fordern?
    Let $\mathcal{R}$ be right amenable, let $\mathcal{F} = \net{F_i}_{i \in I}$ be a right Følner net in $\mathcal{R}$, let $X$ be a $\kappa$-strongly irreducible subshift of $Q^M$ such that $\absoluteValueOf{X} \geq 2$, let $Y$ be a subset of $X$, and let $T$ be a $\ntuple{\theta, \kappa, \theta'}$-tiling of $\mathcal{R}$ such that, for each element $t \in T$, we have $Y_{\ball(t, \theta)} \subsetneqq X_{\ball(t, \theta)}$. Then, $\entropyOf_{\mathcal{F}}(Y) < \entropyOf_{\mathcal{F}}(X)$.
  \end{theorem}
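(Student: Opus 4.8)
The plan is to bound the number $\absoluteValueOf{Y_{F_i}}$ of $F_i$-patterns of $Y$ by a fixed proper fraction of $\absoluteValueOf{X_{F_i}}$, with an exponent growing linearly in $\absoluteValueOf{F_i}$, by means of the counting inequality \cref{lem:technical-inequality-for-theorem-entropy-less-if-real-subsets-for-rho-kappa-theta-tiling}, and then to compare the two entropies with the density estimate \cref{cor:subshifts-upper-bound-of-tiling-cap-folner-net}.

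First I would fix, for each tile $t \in T$, a forbidden pattern. Since $Y_{\ball(t, \theta)} \subsetneqq X_{\ball(t, \theta)}$, there is a pattern $p_t \in X_{\ball(t, \theta)} \smallsetminus Y_{\ball(t, \theta)}$; by construction no point of $Y$ restricts to $p_t$ on $\ball(t, \theta)$. Hence, writing $S_i = T \cap F_i^{-(\theta + \kappa)}$ and letting $\pi_s \from X_{F_i} \to X_{\ball(s, \theta)}$ be restriction to $\ball(s, \theta)$ as in the lemma, every element of $Y_{F_i}$ lies in $X_{F_i} \smallsetminus \bigcup_{s \in S_i} \pi_s^{-1}(p_s)$, so $\absoluteValueOf{Y_{F_i}} \leq \absoluteValueOf{X_{F_i} \smallsetminus \bigcup_{s \in S_i} \pi_s^{-1}(p_s)}$. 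Because $T$ is a $\ntuple{\theta, \kappa, \theta'}$-tiling, the family $\family{\ball(t, \theta)}_{t \in T}$ is pairwise at least $\kappa + 1$ apart, so \cref{lem:technical-inequality-for-theorem-entropy-less-if-real-subsets-for-rho-kappa-theta-tiling} applies with $F = F_i$ and yields $\absoluteValueOf{Y_{F_i}} \leq (1 - \xi^{-1})^{\absoluteValueOf{S_i}} \absoluteValueOf{X_{F_i}}$, where $\xi = \absoluteValueOf{X_{\ball(\theta)^{+\kappa}}}$ is independent of $i$.

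Next I would observe that $\xi \geq 2$: the domain $\ball(\theta)^{+\kappa}$ is non-empty, since it contains $m_0$, so \cref{lem:subshift-with-at-least-two-points-has-at-least-two-patterns-for-each-domain} applies to $X$, which satisfies $\absoluteValueOf{X} \geq 2$. Thus $c = 1 - \xi^{-1}$ lies in $[1/2, 1)$ and $\log c < 0$. Taking logarithms and dividing by $\absoluteValueOf{F_i}$ turns the bound into $\absoluteValueOf{F_i}^{-1} \log \absoluteValueOf{Y_{F_i}} \leq (\absoluteValueOf{S_i} / \absoluteValueOf{F_i}) \log c + \absoluteValueOf{F_i}^{-1} \log \absoluteValueOf{X_{F_i}}$. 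Since $\family{\ball(t, \theta')}_{t \in T}$ covers $M$, \cref{cor:subshifts-upper-bound-of-tiling-cap-folner-net} furnishes an $\varepsilon \in \R_{> 0}$ and an $i_0 \in I$ with $\absoluteValueOf{S_i} \geq \varepsilon \absoluteValueOf{F_i}$ for all $i \geq i_0$; because $\log c < 0$, the middle term is then at most $\varepsilon \log c < 0$, so $\absoluteValueOf{F_i}^{-1} \log \absoluteValueOf{Y_{F_i}} \leq \varepsilon \log c + \absoluteValueOf{F_i}^{-1} \log \absoluteValueOf{X_{F_i}}$ for all $i \geq i_0$.

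Finally, passing to the limit superior along $I$ that computes the entropy, the additive constant $\varepsilon \log c$ carries through and I obtain $\entropyOf_{\mathcal{F}}(Y) \leq \varepsilon \log c + \entropyOf_{\mathcal{F}}(X)$. As $Q$ is finite, $\entropyOf_{\mathcal{F}}(X) \leq \log \absoluteValueOf{Q} < \infty$, so subtracting this finite quantity is legitimate, and the strictly negative term $\varepsilon \log c$ gives $\entropyOf_{\mathcal{F}}(Y) < \entropyOf_{\mathcal{F}}(X)$. I expect the only genuinely delicate steps to be the inclusion $Y_{F_i} \subseteq X_{F_i} \smallsetminus \bigcup_{s \in S_i} \pi_s^{-1}(p_s)$, which rests on the choice of the forbidden patterns $p_t$, and the verification that $\xi \geq 2$ so that the contraction factor $c$ is genuinely below $1$; the remainder is routine manipulation of the two imported estimates together with the finiteness of the entropy.
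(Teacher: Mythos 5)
Your proposal is correct and follows essentially the same route as the paper: choose a forbidden pattern $p_t \in X_{\ball(t, \theta)} \smallsetminus Y_{\ball(t, \theta)}$ for each tile, observe that $Y_{F_i}$ avoids all the $\pi_s^{-1}(p_s)$ for $s \in T \cap F_i^{-(\theta + \kappa)}$, apply \cref{lem:technical-inequality-for-theorem-entropy-less-if-real-subsets-for-rho-kappa-theta-tiling} together with $\xi \geq 2$ from \cref{lem:subshift-with-at-least-two-points-has-at-least-two-patterns-for-each-domain}, and finish with the density bound of \cref{cor:subshifts-upper-bound-of-tiling-cap-folner-net}. The only cosmetic difference is that you spell out the passage to the limit and the finiteness of $\entropyOf_{\mathcal{F}}(X)$, which the paper leaves implicit.
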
 % $\sequence{A_j}_{j \in \N_+}$ be a $\ntuple{\rho, 2 \kappa, \theta}$-sequence in $\mathcal{R}$,

  \begin{usage-note}
    In the proof, $\kappa$-strong irreducibility is used to apply \cref{lem:technical-inequality-for-theorem-entropy-less-if-real-subsets-for-rho-kappa-theta-tiling} yielding the inequality $\absoluteValueOf{X_{F_i} \smallsetminus \bigcup_{t \in T_i} \pi_{i, t}^{-1}(p_t)} \leq (1 - \xi^{-1})^{\absoluteValueOf{T_i}} \cdot \absoluteValueOf{X_{F_i}}$.
  \end{usage-note}

  \begin{proof-sketch} % TODO? Quantifiers % TODO? define \pi_{i, t}
    Let $p_t \in X_{\ball(t, \theta)} \smallsetminus Y_{\ball(t, \theta)}$ and let $T_i = T \cap F_i^{-(\theta + \kappa)}$. Then, $Y_{F_i} \subseteq X_{F_i} \smallsetminus \bigcup_{t \in T_i} \pi_{i, t}^{-1}(p_t)$. Hence, $\absoluteValueOf{Y_{F_i}} \leq (1 - \xi^{-1})^{\absoluteValueOf{T_i}} \cdot \absoluteValueOf{X_{F_i}}$. Therefore, $\log\absoluteValueOf{Y_{F_i}}/\absoluteValueOf{F_i} \leq \log(1 - \xi^{-1}) \cdot \absoluteValueOf{T_i}/\absoluteValueOf{F_i} + \log\absoluteValueOf{X_{F_i}}/\absoluteValueOf{F_i}$. In conclusion, because $\log(1 - \xi^{-1}) < 0$ and $\net{\absoluteValueOf{T_i}/\absoluteValueOf{F_i}}_{i \in I}$ is eventually bounded below away from zero, we have $\entropyOf_{\mathcal{F}}(Y) < \entropyOf_{\mathcal{F}}(X)$.
  \end{proof-sketch}

  \begin{proof} % TODO Benutzung von transversal anders als in Definition. Aber diese Benutzung ist auch üblich, siehe https://en.wikipedia.org/wiki/Hall%27s_marriage_theorem#Marshall_Hall_Jr._variant letzter Satz.
    For each $t \in T$, because $Y_{\ball(t, \theta)} \subsetneqq X_{\ball(t, \theta)}$, we have $X_{\ball(t, \theta)} \smallsetminus Y_{\ball(t, \theta)} \neq \emptyset$. Let $\family{p_t}_{t \in T}$ be a transversal of $\family{X_{\ball(t, \theta)} \smallsetminus Y_{\ball(t, \theta)}}_{t \in T}$ and let $\xi = \absoluteValueOf{X_{\ball(\theta)^{+\kappa}}}$. Furthermore, let $i \in I$, let $T_i = T \cap F_i^{-(\theta + \kappa)}\ (= \setOf{t \in T \suchThat \ball(t, \theta)^{+\kappa} \subseteq F_i})$ and, for each $t \in T_i$, let $\pi_{i, t} \from X_{F_i} \to X_{\ball(t, \theta)}$, $p \mapsto p\restrictedTo_{\ball(t, \theta)}$. Note that, because $\absoluteValueOf{X} \geq 2$, according to \cref{lem:subshift-with-at-least-two-points-has-at-least-two-patterns-for-each-domain}, we have $\xi \geq 2$ and hence $1 - \xi^{-1} > 0$. % TODO Etwas zu der Darstellung von T_i sagen. Ist dasselbe wie in den obigen Lemmata für S. % TODO define "transversal"

    Because $\family{\ball(t, \theta)}_{t \in T}$ is pairwise at least $\kappa + 1$ apart, according to \cref{lem:technical-inequality-for-theorem-entropy-less-if-real-subsets-for-rho-kappa-theta-tiling},
    \begin{equation*}
      \absoluteValueOf{X_{F_i} \smallsetminus \bigcup_{t \in T_i} \pi_{t}^{-1}(p_{t})}
      \leq
      (1 - \xi^{-1})^{\absoluteValueOf{T_i}} \cdot \absoluteValueOf{X_{F_i}}.
    \end{equation*}
    For each $t \in T_i$, because $p_t \notin Y_{\ball(t, \theta)}$, we have $\pi_{i, t}^{-1}(p_t) \cap Y_{F_i} = \emptyset$. Hence, $\parens*{\bigcup_{t \in T_i} \pi_{i, t}^{-1}(p_t)} \cap Y_{F_i} = \emptyset$. Therefore,
    \begin{align*}
      \absoluteValueOf{Y_{F_i}} &=    \absoluteValueOf{Y_{F_i} \smallsetminus \bigcup_{t \in T_i} \pi_{i, t}^{-1}(p_t)}\\
                    &\leq \absoluteValueOf{X_{F_i} \smallsetminus \bigcup_{t \in T_i} \pi_{i, t}^{-1}(p_t)}\\
                    &\leq (1 - \xi^{-1})^{\absoluteValueOf{T_i}} \cdot \absoluteValueOf{X_{F_i}}.
    \end{align*}
    Thus,
    \begin{equation*}
      \frac{\log\absoluteValueOf{Y_{F_i}}}{\absoluteValueOf{F_i}}
      \leq \frac{\absoluteValueOf{T_i}}{\absoluteValueOf{F_i}} \cdot \log(1 - \xi^{-1})
           + \frac{\log\absoluteValueOf{X_{F_i}}}{\absoluteValueOf{F_i}}.
    \end{equation*}
    Because $\family{\ball(t, \theta')}_{t \in T}$ is a cover of $M$, according to \cref{cor:subshifts-upper-bound-of-tiling-cap-folner-net}, there is an $\varepsilon \in \R_{> 0}$ and there is an $i_0 \in I$ such that
    \begin{equation*}
      \ForEach i \in I \Holds \parens*{i \geq i_0 \implies \frac{\absoluteValueOf{T_i}}{\absoluteValueOf{F_i}} \geq \varepsilon}.
    \end{equation*}
    Hence, because $\log(1 - \xi^{-1}) < 0$,
    \begin{equation*}
      \entropyOf_{\mathcal{F}}(Y) \leq \varepsilon \cdot \log(1 - \xi^{-1}) + \entropyOf_{\mathcal{F}}(X)
                                <    \entropyOf_{\mathcal{F}}(X). %\qedhere
    \end{equation*}
  \end{proof}

  %----------------------------------------------------------------------------------------
  \section{The Garden of Eden Theorems} % Paper 4
  \label{sec:gardens-of-eden}
  %----------------------------------------------------------------------------------------

  \subsubsection*{Contents.} The image of a local map to a strongly irreducible shift space that is not surjective does not have maximal entropy (see \cref{thm:subshift-not-surjective-implies-less-entropy}). And the converse of that statement obviously holds. Moreover, a local map from a strongly irreducible shift space of finite type whose image has less entropy than its domain is not pre-injective (see \cref{thm:subshift-less-entropy-implies-not-pre-injective}). And the converse of that statement also holds (see \cref{thm:subshift-not-pre-injective-implies-less-entropy}). These four statements establish the Garden of Eden theorem (see Main Theorem~\ref{thm:subshift-garden-of-eden}). It follows that strongly irreducible shift spaces of finite type have the Moore and the Myhill property (see \cref{cor:Moore-and-Myhill}). % TODO Not all lemmata etc. are mentioned here

  % TODO Reference examples of Garden of Eden paper that illustrate that some requirements are necessary (they can be used because the full shift is 0-strongly irreducible and of finite type)
  % TODO Examples that illustrate that strong irreducibility and of finite type are necessary.

  \subsubsection*{Body.} Because a local map that is not surjective has a Garden of Eden pattern, the entropy of its image is not maximal, which is shown in

  \begin{theorem} % Paper 4.6 % TODO confer thm:not-surjective-implies-less-entropy
  \label{thm:subshift-not-surjective-implies-less-entropy}
    Let $\mathcal{R}$ be right amenable, let $\mathcal{F}$ be a right Følner net in $\mathcal{R}$, let $M$ be infinite, let $X$ be a non-empty subshift of $Q^M$, let $Y$ be a strongly irreducible subshift of $Q^M$, and let $\Delta$ be a local map from $X$ to $Y$ that is not surjective. Then, $\entropyOf_{\mathcal{F}}(\Delta(X)) < \entropyOf_{\mathcal{F}}(Y)$.
  \end{theorem}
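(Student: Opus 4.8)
The plan is to produce a \emph{Garden of Eden} block --- a finite pattern that is allowed in $Y$ but forbidden in $\Delta(X)$ --- and then to feed the resulting strict inclusion of pattern sets into \cref{thm:entropy-less-if-real-subsets-for-rho-kappa-theta-tiling}, playing the strongly irreducible space $Y$ in the role of the ambient shift and the image $\Delta(X)$ in the role of the subset. First I would record the standing facts about $\Delta(X)$: by \cref{lem:image-of-local-map-is-subshift} it is a subshift, hence shift-invariant by \cref{lem:shift-space-is-shift-invariant} and, by \cref{lem:shift-space-iff-invariant-and-compact}, compact and so closed in $Q^M$. Since $\Delta$ is not surjective we have $\Delta(X) \subsetneqq Y$, and since $X \neq \emptyset$ the image $\Delta(X)$ is non-empty; therefore $Y$ has at least two points, i.e. $\absoluteValueOf{Y} \geq 2$. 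Finally, $Y$ being strongly irreducible, it is $\kappa$-strongly irreducible for some $\kappa \in \N_0$.

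Next I would construct the Garden of Eden block. Choosing a point $y \in Y \smallsetminus \Delta(X)$, the balls $\ball(\rho)$ exhaust $M$ (as $\mathcal{R}$ is finitely right generated), so the cylinders $\cylinder(y\restrictedTo_{\ball(\rho)})$, for $\rho \in \N_0$, form a neighbourhood basis of $y$ in the prodiscrete topology. Because $\Delta(X)$ is closed and $y \notin \Delta(X)$, there is a $\theta \in \N_0$ with $\cylinder(y\restrictedTo_{\ball(\theta)}) \cap \Delta(X) = \emptyset$. Setting $p = y\restrictedTo_{\ball(\theta)}$ then gives $p \in Y_{\ball(\theta)}$ but $p \notin (\Delta(X))_{\ball(\theta)}$, so, using $\Delta(X) \subseteq Y$, we obtain $(\Delta(X))_{\ball(\theta)} \subsetneqq Y_{\ball(\theta)}$.

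I would then propagate this strict inclusion to every centre by shift-invariance. For $t \in M$, applying $t \actsByItsCoordinateOn \blank$ (equivalently, translating by $g_{m_0, t}$) and invoking \cref{rem:pattern-belongs-to-shift-if-and-only-if-translated-pattern-belongs} together with $t \isSemiActedUponBy \ball(\theta) = \ball(t, \theta)$ (\cite[Corollary~5.14]{wacker:growth:2017}) carries $(\Delta(X))_{\ball(\theta)} \subsetneqq Y_{\ball(\theta)}$ over to $(\Delta(X))_{\ball(t, \theta)} \subsetneqq Y_{\ball(t, \theta)}$; the properness survives because $t \actsByItsCoordinateOn \blank$ is a bijection. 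For the assembly, since $M$ is infinite, \cref{thm:there-is-a-rho-kappa-theta-tiling} furnishes a $\ntuple{\theta, \kappa, 4\theta + 2\kappa}$-tiling $T$ of $\mathcal{R}$, and the displayed strict inclusion holds at every $t \in T$. Applying \cref{thm:entropy-less-if-real-subsets-for-rho-kappa-theta-tiling} with the $\kappa$-strongly irreducible $Y$ (which has $\absoluteValueOf{Y} \geq 2$) as the ambient shift, $\Delta(X)$ as the subset, and $T$ as the tiling yields $\entropyOf_{\mathcal{F}}(\Delta(X)) < \entropyOf_{\mathcal{F}}(Y)$.

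The main obstacle is the second paragraph, the construction of the Garden of Eden block: the genuinely essential input is that the failure of surjectivity of a continuous map between compact spaces localises to a single finite forbidden pattern, which is exactly where compactness of $\Delta(X)$ is used. Once that block is in hand, the remainder is routine bookkeeping --- matching the Garden-of-Eden radius $\theta$ and the strong-irreducibility constant $\kappa$ of $Y$ to the parameters of the tiling theorem, and then quoting the entropy-drop theorem. One should note that no compatibility between $\theta$ and $\kappa$ is required, since \cref{thm:there-is-a-rho-kappa-theta-tiling} accepts arbitrary non-negative integers $\theta$ and $\kappa$.
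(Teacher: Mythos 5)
Your proposal is correct and follows essentially the same route as the paper's proof: extract a Garden of Eden block $y\restrictedTo_{\ball(\theta)}$ from a point $y \in Y \smallsetminus \Delta(X)$ (the paper cites \cref{lem:if-restrictions-to-balls-are-in-shift-then-so-is-pattern} where you argue directly with closedness and cylinders, which amounts to the same thing), translate the strict inclusion to every tile centre via \cref{rem:pattern-belongs-to-shift-if-and-only-if-translated-pattern-belongs}, and conclude with \cref{thm:there-is-a-rho-kappa-theta-tiling} and \cref{thm:entropy-less-if-real-subsets-for-rho-kappa-theta-tiling}. All the side conditions ($\Delta(X)$ a subshift, $\absoluteValueOf{Y} \geq 2$) are verified just as in the paper.
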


  \begin{usage-note}
    In the proof, infiniteness of $M$ is used to apply \cref{thm:there-is-a-rho-kappa-theta-tiling} yielding a tiling, locality of $\Delta$ is used to apply \cref{lem:image-of-local-map-is-subshift} yielding that $\Delta(X)$ is a subshift of $Q^M$, and strong irreducibility of $Y$ is used to apply \cref{thm:entropy-less-if-real-subsets-for-rho-kappa-theta-tiling} yielding a strict inequality for entropies.
  \end{usage-note}

  \begin{proof-sketch} % Proof Idea: \Delta(X) \subsetneqq Y     \implies   (\Delta(X))_{\ball(\rho)} \subsetneqq Y_{\ball(\rho)}    \implies     (\Delta(X))_{\ball(m_j, \rho)} \subsetneqq Y_{\ball(m_j, \rho)} % TODO What role does the strong irreducibility play in this and other proof sketches?
    Because $\Delta$ is not surjective, there is a Garden of Eden configuration. Thus, because $\Delta$ is local, there is a Garden of Eden pattern. Hence, there are too many Garden of Eden configurations for the entropy to be maximal. % TODO Define "Garden of Eden ..." % TODO Garden of Eden configuration vs. Garden of Eden point (we use "point of the full shift" instead of "global configuration" everywhere else)
  \end{proof-sketch}

  \begin{proof}
    Because $Y$ is strongly irreducible, there is a $\kappa \in \N_0$ such that $Y$ is $\kappa$-strongly irreducible. And, because $\Delta$ is not surjective, there is a $y \in Y \smallsetminus \Delta(X)$. Hence, according to \cref{lem:if-restrictions-to-balls-are-in-shift-then-so-is-pattern}, there is a $\rho \in \N_0$ such that $y\restrictedTo_{\ball(\rho)} \notin (\Delta(X))_{\ball(\rho)}$ and thus $y\restrictedTo_{\ball(\rho)} \in Y_{\ball(\rho)} \smallsetminus (\Delta(X))_{\ball(\rho)}$. And, because $M$ is infinite, according to \cref{thm:there-is-a-rho-kappa-theta-tiling}, there is a $\ntuple{\rho, \kappa, \theta'}$-tiling $T$ of $\mathcal{R}$.

    According to \cref{lem:image-of-local-map-is-subshift}, the set $\Delta(X)$ is a subshift of $Q^M$. And, for each $t \in T$, according to \cite[Corollary~5.14]{wacker:growth:2017}, we have $t \isSemiActedUponBy \ball(\rho) = \ball(t, \rho)$. Therefore, for each $t \in T$, because $t \actsByItsCoordinateOn \blank$ is bijective and according to \cref{rem:pattern-belongs-to-shift-if-and-only-if-translated-pattern-belongs}, we have $t \actsByItsCoordinateOn (y\restrictedTo_{\ball(\rho)}) \in Y_{\ball(t, \rho)} \smallsetminus (\Delta(X))_{\ball(t, \rho)}$ and thus $(\Delta(X))_{\ball(t, \rho)} \subsetneqq Y_{\ball(t, \rho)}$. % TODO Unschön: Zweimal "for each $t \in T$"

    Because $X$ is non-empty and $\Delta$ is not surjective, we have $\absoluteValueOf{Y} \geq 2$. In conclusion, because $Y$ is $\kappa$-strongly irreducible, according to \cref{thm:entropy-less-if-real-subsets-for-rho-kappa-theta-tiling}, we have $\entropyOf_{\mathcal{F}}(\Delta(X)) < \entropyOf_{\mathcal{F}}(Y)$.
  \end{proof}

  If there are less patterns in the codomain of a local map than in its domain, at least two patterns have the same image, which is shown in

  \begin{lemma} % TODO? Mention that proof also works if $\boundaryOf_{2\kappa}^+ F^{+\kappa}$ is the empty set.
  \label{lem:there-are-patterns-that-agree-on-boundary-and-have-the-same-image-under-Delta-restriction-k}
    Let $X$ be a $\kappa$-strongly irreducible subshift of $Q^M$, let $Y$ be a subshift of $Q^M$, let $\Delta$ be a $\kappa$-local map from $X$ to $Y$, and let $F$ be a finite subset of $M$ such that $\absoluteValueOf{Y_{F^{+2\kappa}}} < \absoluteValueOf{X_F}$. There are two patterns $p$ and $p'$ in $X_{F^{+3\kappa}}$ such that $p \neq p'$, $p\restrictedTo_{\boundaryOf_{2\kappa}^+ F^{+\kappa}} = p'\restrictedTo_{\boundaryOf_{2\kappa}^+ F^{+\kappa}}$, and $\Delta_{F^{+3\kappa}}^-(p) = \Delta_{F^{+3\kappa}}^-(p')$. % TODO? We may even choose the restriction of p and p' to \boundaryOf_{2\kappa}^+ F^{+\kappa} (it is $v$ in the proof)
  \end{lemma}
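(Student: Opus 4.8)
The plan is to argue by a pigeonhole/counting argument made possible by $\kappa$-strong irreducibility. First I would fix, once and for all, an allowed pattern $w$ on the outer annulus $\boundaryOf_{2\kappa}^+ F^{+\kappa}$, and then, for each pattern $q \in X_F$, use strong irreducibility to glue $q$ together with this fixed annulus pattern into a point of $X$; restricting that point to $F^{+3\kappa}$ yields a pattern $\hat{q} \in X_{F^{+3\kappa}}$ that reduces to $q$ on $F$ and to $w$ on the annulus. Since distinct values of $q$ yield distinct $\hat{q}$ and all of them share the same annulus values, a collision among the images $\Delta_{F^{+3\kappa}}^-(\hat{q})$ will produce the desired pair $p, p'$.

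Concretely, note first that $\boundaryOf_{2\kappa}^+ F^{+\kappa} = F^{+3\kappa} \smallsetminus F^{+\kappa}$ and that every point of this annulus lies at distance at least $\kappa + 1$ from $F$, since a point outside $F^{+\kappa}$ is by definition at distance more than $\kappa$ from $F$. Because $\absoluteValueOf{X_F} > \absoluteValueOf{Y_{F^{+2\kappa}}} \geq 0$, the set $X$ is non-empty, so I may pick $x_0 \in X$ and set $w = x_0\restrictedTo_{\boundaryOf_{2\kappa}^+ F^{+\kappa}}$, an allowed finite pattern. For each $q \in X_F$ the patterns $q$ and $w$ are allowed in $X$ and have disjoint domains at distance at least $\kappa + 1$, so by $\kappa$-strong irreducibility (\cref{def:kappa-strongly-irreducible}) there is an $x_q \in X$ with $x_q\restrictedTo_F = q$ and $x_q\restrictedTo_{\boundaryOf_{2\kappa}^+ F^{+\kappa}} = w$; put $\hat{q} = x_q\restrictedTo_{F^{+3\kappa}} \in X_{F^{+3\kappa}}$.

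Next I would apply pigeonhole. Define $\Phi \from X_F \to Y_{F^{+2\kappa}}$ by $\Phi(q) = \Delta(x_q)\restrictedTo_{F^{+2\kappa}}$, which lies in $Y_{F^{+2\kappa}}$ because $\Delta(x_q) \in Y$. As $\absoluteValueOf{X_F} > \absoluteValueOf{Y_{F^{+2\kappa}}}$, the map $\Phi$ cannot be injective, so there are $q_1 \neq q_2$ in $X_F$ with $\Phi(q_1) = \Phi(q_2)$. I set $p = \hat{q_1}$ and $p' = \hat{q_2}$. Then $p \neq p'$ because they differ on $F$, and $p\restrictedTo_{\boundaryOf_{2\kappa}^+ F^{+\kappa}} = w = p'\restrictedTo_{\boundaryOf_{2\kappa}^+ F^{+\kappa}}$, so two of the three required properties hold immediately.

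The main obstacle is the third property, $\Delta_{F^{+3\kappa}}^-(p) = \Delta_{F^{+3\kappa}}^-(p')$, because $\Phi$ only records agreement on $F^{+2\kappa}$, whereas $\Delta_{F^{+3\kappa}}^-$ outputs on the possibly larger domain $(F^{+3\kappa})^{-\kappa}$ (\cref{def:restriction-of-local-map}). To close this gap I note that $F^{+2\kappa} \subseteq (F^{+3\kappa})^{-\kappa}$ and split the output domain accordingly. On $F^{+2\kappa}$ the two outputs agree by the choice of $q_1, q_2$. For $m \in (F^{+3\kappa})^{-\kappa} \smallsetminus F^{+2\kappa}$ one has $\ball(m, \kappa) \cap F^{+\kappa} = \emptyset$, since $m \notin F^{+2\kappa}$, together with $\ball(m, \kappa) \subseteq F^{+3\kappa}$, since $m \in (F^{+3\kappa})^{-\kappa}$; hence $\ball(m, \kappa)$ lies entirely in the annulus $F^{+3\kappa} \smallsetminus F^{+\kappa}$, where $x_{q_1}$ and $x_{q_2}$ both coincide with $w$. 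Because $\Delta$ is $\kappa$-local, its neighbourhood at $m$ sits inside $\ball(m, \kappa)$, so the value $\Delta(x_{q_1})(m) = \Delta(x_{q_2})(m)$ is forced --- this is exactly the well-definedness argument for the restriction of a local map, applied on the ball $\ball(m, \kappa)$. Combining the two regions yields $\Delta_{F^{+3\kappa}}^-(p) = \Delta_{F^{+3\kappa}}^-(p')$, completing the proof.
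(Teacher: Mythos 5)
Your proof is correct and follows essentially the same route as the paper's: fix one allowed pattern on the annulus $F^{+3\kappa} \smallsetminus F^{+\kappa}\ (= \boundaryOf_{2\kappa}^+ F^{+\kappa})$, glue each pattern of $X_F$ to it using $\kappa$-strong irreducibility, and apply the pigeonhole principle via $\absoluteValueOf{X_F} > \absoluteValueOf{Y_{F^{+2\kappa}}}$. Your final paragraph, which upgrades the agreement of the images from $F^{+2\kappa}$ to all of $(F^{+3\kappa})^{-\kappa}$, addresses a point the paper sidesteps by simply declaring $Y_{F^{+2\kappa}}$ to be the codomain of $\Delta_{F^{+3\kappa}}^-$; your treatment is the more careful of the two.
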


  \begin{usage-note}
    In the proof, strong irreducibility of $X$ is used to extend an in $X$ allowed $F$-pattern by an in $X$ allowed $\boundaryOf_{2\kappa}^+ F^{+\kappa}$-pattern and an $\boundaryOf_\kappa^+ F$-pattern to an in $X$ allowed $F^{+3\kappa}$-pattern; and $\kappa$-locality of $\Delta$ is used to restrict it to a map from $X_{F^{+3\kappa}}$ to $Y_{F^{+2\kappa}}$.
  \end{usage-note}

  \begin{proof} % TODO Schöner notieren.
    Because $\absoluteValueOf{Y_{F^{+2\kappa}}} < \absoluteValueOf{X_F}$, we have $\absoluteValueOf{X_F} > 0$, thus $X_F \neq \emptyset$, and hence $X \neq \emptyset$. %Therefore, because $\Delta$ is a map from $X$ to $Y$, we have $Y \neq \emptyset$ and thus $\absoluteValueOf{Y_{F^{+2\kappa}}} \geq 1$. Hence, because $\absoluteValueOf{Y_{F^{+2\kappa}}} < \absoluteValueOf{X_F}$, we have $\absoluteValueOf{X_F} \geq 2$ and thus $F \neq \emptyset$.
    Therefore, there is a $v \in X_{\boundaryOf_{2\kappa}^+ F^{+\kappa}}$. Let $P_v = \setOf{p \in X_{F^{+3\kappa}} \suchThat p\restrictedTo_{\domainOf(v)} = v}$. Note that, according to \cite[Item~4 of Lemma~6.4]{wacker:growth:2017}, we have $\boundaryOf_{2\kappa}^+ F^{+\kappa} = F^{+3\kappa} \smallsetminus F^{+\kappa}$.
  %  \begin{proof-of}[There is an injective map from $X_F$ to $P_v$]

      Let $u \in X_F$. According to \cite[Corollary~6.6]{wacker:growth:2017}, we have $d(F, \domainOf(v)) \geq \kappa + 1$. Hence, because $X$ is $\kappa$-strongly irreducible, there is an $x \in X$ such that $x\restrictedTo_F = u$ and $x\restrictedTo_{\domainOf(v)} = v$. Let $p = x\restrictedTo_{F^{+3\kappa}}$. Then, $p\restrictedTo_F = u$ and $p \in P_v$.
  %  \end{proof-of}

    Therefore, $\absoluteValueOf{P_v} \geq \absoluteValueOf{X_F} > \absoluteValueOf{Y_{F^{+2\kappa}}}$. The restriction $\phi$ of $\Delta_{F^{+3\kappa}}^- \from X_{F^{+3\kappa}} \to Y_{F^{+2\kappa}}$ to $P_v \to \Delta_{F^{+3\kappa}}^-(P_v)$ is surjective. Note that, because $\Delta$ is $\kappa$-local and, according to \cite[Item~5 of Lemma~6.4]{wacker:growth:2017}, we have $(F^{+3\kappa})^{-\kappa} \supseteq F^{+2\kappa}$, we can choose $Y_{F^{+2\kappa}}$ as the codomain of $\Delta_{F^{+3\kappa}}^-$. If $\phi$ were injective, then $\absoluteValueOf{P_v} = \absoluteValueOf{\phi(P_v)} \leq \absoluteValueOf{Y_{F^{+2\kappa}}}$, which contradicts that $\absoluteValueOf{P_v} > \absoluteValueOf{Y_{F^{+2\kappa}}}$. Hence, $\phi$ is not injective. In conclusion, there are $p$, $p' \in P_v$ such that $p \neq p'$ and $\phi(p) = \phi(p')$.
  \end{proof}

  Because a local map, that has an image whose entropy is less than the entropy of its domain, maps at least two finite patterns to the same pattern, it is not pre-injective, which is shown in

  \begin{theorem} % Paper 4.2 % TODO confer thm:less-entropy-implies-not-pre-injective
  \label{thm:subshift-less-entropy-implies-not-pre-injective}
    Let $\mathcal{R}$ be right amenable, let $\mathcal{F} = \net{F_i}_{i \in I}$ be a right Følner net in $\mathcal{R}$, let $X$ be a strongly irreducible subshift of $Q^M$ of finite type, let $Y$ be a subshift of $Q^M$, and let $\Delta$ be a local map from $X$ to $Y$ such that $\entropyOf_{\mathcal{F}}(\Delta(X)) < \entropyOf_{\mathcal{F}}(X)$. The map $\Delta$ is not pre-injective.
  \end{theorem}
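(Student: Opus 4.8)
The plan is to turn the asymptotic inequality $\entropyOf_{\mathcal{F}}(\Delta(X)) < \entropyOf_{\mathcal{F}}(X)$ into a concrete pigeonhole statement over a single Følner component, apply \cref{lem:there-are-patterns-that-agree-on-boundary-and-have-the-same-image-under-Delta-restriction-k} to obtain two distinct local patterns with the same image that agree on a thick boundary, and then glue them into two full configurations witnessing non-pre-injectivity.

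First I would fix a single constant $\kappa \in \N_0$ serving all purposes at once. Since $X$ is strongly irreducible and of finite type, by \cref{def:strongly-irreducible}, \cref{lem:of-finite-type-implies-step}, and the monotonicity statements \cref{rem:k-strongly-irreducible-for-greater-k} and \cref{rem:k-step-for-greater-k}, I may choose $\kappa$ so that $X$ is simultaneously $\kappa$-strongly irreducible and $\kappa$-step; enlarging $\kappa$ via \cref{rem:k-local-for-greater-k}, I may also assume that $\Delta$ is $\kappa$-local. By \cref{lem:image-of-local-map-is-subshift} the image $\Delta(X)$ is a subshift, and $\Delta$ is a $\kappa$-local map from $X$ to $\Delta(X)$, so I can feed $\Delta(X)$ into the role of $Y$ in the key lemma. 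The aim of the reduction is to produce an index $i \in I$ with
\[
  \absoluteValueOf{(\Delta(X))_{F_i^{+2\kappa}}} < \absoluteValueOf{X_{F_i}},
\]
since this is exactly the hypothesis $\absoluteValueOf{Y_{F^{+2\kappa}}} < \absoluteValueOf{X_F}$ of \cref{lem:there-are-patterns-that-agree-on-boundary-and-have-the-same-image-under-Delta-restriction-k} with $Y = \Delta(X)$ and $F = F_i$.

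The step I expect to be the main obstacle is precisely this conversion, because entropy is an asymptotic quantity whereas the lemma needs a cardinality inequality at a finite stage, and the two sides are measured over the slightly different domains $F_i^{+2\kappa}$ and $F_i$. I would argue by contradiction: suppose $\absoluteValueOf{(\Delta(X))_{F_i^{+2\kappa}}} \geq \absoluteValueOf{X_{F_i}}$ for all large $i$. Restriction to $F_i$ gives the factorisation bound $\absoluteValueOf{(\Delta(X))_{F_i^{+2\kappa}}} \leq \absoluteValueOf{(\Delta(X))_{F_i}} \cdot \absoluteValueOf{Q}^{\absoluteValueOf{F_i^{+2\kappa} \smallsetminus F_i}}$, and since $F_i^{+2\kappa} \smallsetminus F_i \subseteq \boundaryOf_{2\kappa} F_i$, taking logarithms, dividing by $\absoluteValueOf{F_i}$, and using the defining Følner property $\absoluteValueOf{\boundaryOf_{2\kappa} F_i}/\absoluteValueOf{F_i} \to 0$ shows that $\log\absoluteValueOf{(\Delta(X))_{F_i^{+2\kappa}}}/\absoluteValueOf{F_i}$ has the same asymptotic growth rate as $\log\absoluteValueOf{(\Delta(X))_{F_i}}/\absoluteValueOf{F_i}$, whose value is $\entropyOf_{\mathcal{F}}(\Delta(X))$. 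On the other hand, the contradiction hypothesis gives $\log\absoluteValueOf{X_{F_i}}/\absoluteValueOf{F_i} \leq \log\absoluteValueOf{(\Delta(X))_{F_i^{+2\kappa}}}/\absoluteValueOf{F_i}$, so passing to the asymptotic growth rate yields $\entropyOf_{\mathcal{F}}(X) \leq \entropyOf_{\mathcal{F}}(\Delta(X))$, contradicting the hypothesis of the theorem. This produces the desired index $i$.

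Finally I would apply \cref{lem:there-are-patterns-that-agree-on-boundary-and-have-the-same-image-under-Delta-restriction-k} with $Y = \Delta(X)$ and $F = F_i$ to obtain patterns $p \neq p'$ in $X_{F_i^{+3\kappa}}$ with $p\restrictedTo_{\boundaryOf_{2\kappa}^+ F_i^{+\kappa}} = p'\restrictedTo_{\boundaryOf_{2\kappa}^+ F_i^{+\kappa}}$ and $\Delta_{F_i^{+3\kappa}}^-(p) = \Delta_{F_i^{+3\kappa}}^-(p')$. Setting $A = F_i^{+\kappa}$, so that $A^{+2\kappa} = F_i^{+3\kappa}$, and invoking \cref{cor:overlapping-patterns-can-be-glued} (where $\kappa$-stepness enters) gives points $x$, $x' \in X$ with $x\restrictedTo_{F_i^{+3\kappa}} = p$, $x'\restrictedTo_{F_i^{+3\kappa}} = p'$, and $x\restrictedTo_{M \smallsetminus F_i^{+\kappa}} = x'\restrictedTo_{M \smallsetminus F_i^{+\kappa}}$. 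Then $x \neq x'$ because $p \neq p'$, and $\differenceOf(x, x') \subseteq F_i^{+\kappa}$ is finite. It remains to check $\Delta(x) = \Delta(x')$: on $F_i^{+2\kappa} \subseteq (F_i^{+3\kappa})^{-\kappa}$ both images equal the common restriction $\Delta_{F_i^{+3\kappa}}^-(p)\restrictedTo_{F_i^{+2\kappa}}$ by the definition of the restricted map, while for $m \notin F_i^{+2\kappa}$ the neighbourhood $\ball(m, \kappa)$ is disjoint from $F_i^{+\kappa}$ and hence from $\differenceOf(x, x')$, so $x$ and $x'$ agree there and $\kappa$-locality gives $\Delta(x)(m) = \Delta(x')(m)$. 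Thus $x$ and $x'$ differ on a finite nonempty set yet satisfy $\Delta(x) = \Delta(x')$, so by \cref{def:pre-injective-for-local-maps} the map $\Delta$ is not pre-injective.
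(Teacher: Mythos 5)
Your proof is correct and follows essentially the same route as the paper's: reduce the entropy gap to a single-index inequality $\absoluteValueOf{(\Delta(X))_{F_i^{+2\kappa}}} < \absoluteValueOf{X_{F_i}}$, feed it into \cref{lem:there-are-patterns-that-agree-on-boundary-and-have-the-same-image-under-Delta-restriction-k}, and glue the resulting patterns via \cref{cor:overlapping-patterns-can-be-glued}. The only deviation is that where the paper cites an external result to compare entropy along $\net{F_i^{+2\kappa}}_{i \in I}$ with entropy along $\mathcal{F}$, you re-derive the needed inequality directly by bounding $\absoluteValueOf{(\Delta(X))_{F_i^{+2\kappa}}}$ through $\absoluteValueOf{(\Delta(X))_{F_i}} \cdot \absoluteValueOf{Q}^{\absoluteValueOf{\boundaryOf_{2\kappa} F_i}}$ and invoking the Følner property, which is a sound, self-contained substitute.
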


  \begin{usage-note}
    In the proof, strong irreducibility of $X$ and locality of $\Delta$ are used to apply \cref{lem:there-are-patterns-that-agree-on-boundary-and-have-the-same-image-under-Delta-restriction-k} yielding two distinct finite patterns with the same domain, identical boundaries, and identical images; and of finite typeness of $X$ is used to apply \cref{cor:overlapping-patterns-can-be-glued} to identically extend these patterns to points of $X$.
  \end{usage-note}

  \begin{proof-sketch}
    Because the entropy of $\Delta(X)$ is less than the one of $X$, the number of finite patterns in $\Delta(X)$ grows slower than in $X$. Hence, there are two distinct finite patterns in $X$ that have the same image and these can be identically extended to two distinct points of $X$ that have the same image. Therefore, the map $\Delta$ is not pre-injective.
  \end{proof-sketch}

  \begin{proof} % TODO Schöner notieren.
    According to \cref{rem:k-strongly-irreducible-for-greater-k}, \cref{lem:of-finite-type-implies-step} and \cref{rem:k-step-for-greater-k}, and \cref{rem:k-local-for-greater-k}, there is a $\kappa \in \N_0$ such that $X$ is $\kappa$-strongly irreducible, $X$ is $\kappa$-step, and $\Delta$ is $\kappa$-local.

    Let $Y = \Delta(X)$. According to \cref{rem:local-map-iff-cellular-automaton} and \cite[Lemma~11]{wacker:garden:2016} and the precondition $\entropyOf_{\mathcal{F}}(Y) < \entropyOf_{\mathcal{F}}(X)$, we have $\entropyOf_{\net{F_i^{+2\kappa}}_{i \in I}}(Y) \leq \entropyOf_{\mathcal{F}}(Y) < \entropyOf_{\mathcal{F}}(X)$. Hence, there is an $i \in I$ such that
    \begin{equation*}
      \frac{\log\absoluteValueOf{Y_{F_i^{+2\kappa}}}}{\absoluteValueOf{F_i}} < \frac{\log\absoluteValueOf{X_{F_i}}}{\absoluteValueOf{F_i}}.
    \end{equation*}
    Thus, $\log\absoluteValueOf{Y_{F_i^{+2\kappa}}} < \log\absoluteValueOf{X_{F_i}}$ and thus $\absoluteValueOf{Y_{F_i^{+2\kappa}}} < \absoluteValueOf{X_{F_i}}$.
    %Note that, according to \cref{it:...} of \cref{lem:repeated-k-bounderies-etc}, we have $(F_i^{+2\kappa})^{+\kappa} = F_i^{+3\kappa}$ and $\boundaryOf_{2\kappa}^+ F_i^{+\kappa} = F_i^{+3\kappa} \smallsetminus F_i^{+\kappa}$.
    Therefore, because $X$ is $\kappa$-strongly irreducible and $\Delta$ is $\kappa$-local, according to \cref{lem:there-are-patterns-that-agree-on-boundary-and-have-the-same-image-under-Delta-restriction-k}, there are $p$ and $p'$ in $X_{F_i^{+3\kappa}}$ such that $p \neq p'$, $p\restrictedTo_{\boundaryOf_{2\kappa}^+ F_i^{+\kappa}} = p'\restrictedTo_{\boundaryOf_{2\kappa}^+ F_i^{+\kappa}}$, and $\Delta_{F_i^{+3\kappa}}^-(p) = \Delta_{F_i^{+3\kappa}}^-(p')$.

    Hence, because $X$ is $\kappa$-step, according to \cref{cor:overlapping-patterns-can-be-glued}, there are $x$ and $x'$ in $X$ such that $x\restrictedTo_{\domainOf(p)} = p$, $x'\restrictedTo_{\domainOf(p')} = p'$, and $x\restrictedTo_{M \smallsetminus F_i^{+\kappa}} = x'\restrictedTo_{M \smallsetminus F_i^{+\kappa}}$. In particular, because $p \neq p'$, we have $x \neq x'$ and, because $F_i^{+\kappa}$ is finite, the set $\differenceOf(x, x')$ is finite.

    Moreover, $\Delta(x)\restrictedTo_{F_i^{+2\kappa}} = \Delta_{F_i^{+3\kappa}}^-(p) = \Delta_{F_i^{+3\kappa}}^-(p') = \Delta(x')\restrictedTo_{F_i^{+2\kappa}}$. And, according to \cref{rem:local-map-iff-cellular-automaton} and \cite[Lemma~2]{wacker:garden:2016}, we have $\Delta(x)\restrictedTo_{M \smallsetminus F_i^{+2\kappa}} = \Delta(x')\restrictedTo_{M \smallsetminus F_i^{+2\kappa}}$. Therefore, $\Delta(x) = \Delta(x')$. In conclusion, $\Delta$ is not pre-injective.
  \end{proof}

  If in a point of a shift space we replace all occurrences of a pattern by another pattern with the same image that agree on a big enough boundary, we get a new point of the shift space in which the first pattern does not occur that has the same image as the original point, which is shown in

  \begin{lemma} % TODO Mention that according to the original statement of \cref{it:properties-of-interior-closure-and-boundary:different-repeated}, which is now only in the comments,, the codomain of $\Delta_{A^{+2\kappa}}^-$ contains $A$ in the case that m_0 \in A
  \label{lem:exchanging-pattern-by-other-pattern-with-same-image-yields-configuration-with-same-image-and-we-stay-in-subshift}
    Let $X$ be a $\kappa$-step subshift of $Q^M$, let $Y$ be a subshift of $Q^M$, let $\Delta$ be a $\kappa$-local map from $X$ to $Y$, let $A$ be a subset of $M$, let $p$ and $p'$ be two patterns in $X_{A^{+2\kappa}}$ such that $p\restrictedTo_{\boundaryOf_{2\kappa}^+ A} = p'\restrictedTo_{\boundaryOf_{2\kappa}^+ A}$ and $\Delta_{A^{+2\kappa}}^-(p) = \Delta_{A^{+2\kappa}}^-(p')$. Furthermore, let $c$ be a point of $X$ and let $T$ be a subset of $M$ such that the family $\family{t \isSemiActedUponBy A^{+2\kappa}}_{t \in T}$ is pairwise disjoint and that, for each element $t \in T$, we have $p \occursIn_t c$. Put % TODO T ungeschickt, da sonst für Tiling
    \begin{equation*} % TODO Sagen, dass \domainOf c' = M
      c' = c\restrictedTo_{M \smallsetminus (\bigcup_{t \in T} t \isSemiActedUponBy A^{+2\kappa})} \times \coprod_{t \in T} t \actsByItsCoordinateOn p'.
    \end{equation*}
    Then, for each element $t \in T$, we have $p' \occursIn_t c'$, and $c' \in X$, and $\Delta(c) = \Delta(c')$. In particular, if $p \neq p'$, then, for each element $t \in T$, we have $p \not\occursIn_t c'$.
  \end{lemma}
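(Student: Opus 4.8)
The plan is to verify the four claims in turn; the two substantive ones are $c' \in X$ and $\Delta(c) = \Delta(c')$. Throughout I would use that $g_{m_0, t} \in H$ (because $H$ is $\mathcal{K}$-big), that the right semi-action is free, so that each $t \isSemiActedUponBy \blank$ is injective and commutes with Boolean operations and with the closure and interior operators $\blank^{+\rho}$ and $\blank^{-\rho}$, and that $t \actsByItsCoordinateOn \blank = g_{m_0, t} \actsOnMap \blank$ with $\domainOf(t \actsByItsCoordinateOn q) = t \isSemiActedUponBy \domainOf(q)$. The occurrence $p' \occursIn_t c'$ is then immediate: since the sets $t \isSemiActedUponBy A^{+2\kappa}$ are pairwise disjoint, the restriction of $c'$ to $t \isSemiActedUponBy A^{+2\kappa} = t \isSemiActedUponBy \domainOf(p')$ is by construction exactly $t \actsByItsCoordinateOn p'$. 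The last clause is equally short: were $p \occursIn_t c'$ to hold, then $t \actsByItsCoordinateOn p = c'\restrictedTo_{t \isSemiActedUponBy A^{+2\kappa}} = t \actsByItsCoordinateOn p'$, and injectivity of $t \actsByItsCoordinateOn \blank$ would give $p = p'$, against the assumption.

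For $c' \in X$ I would invoke \cref{lem:overlapping-global-configurations-can-be-glued} with holes $A_t = t \isSemiActedUponBy A$, so that $A_t^{+2\kappa} = t \isSemiActedUponBy A^{+2\kappa}$ and the family $\family{A_t^{+2\kappa}}_{t \in T}$ is pairwise disjoint by hypothesis. Pick $y' \in X$ with $y'\restrictedTo_{A^{+2\kappa}} = p'$ and set $x_t = g_{m_0, t} \actsOnMap y' \in X$ (using shift-invariance of $X$); then $x_t\restrictedTo_{A_t^{+2\kappa}} = t \actsByItsCoordinateOn p'$. The gluing hypothesis $x_t\restrictedTo_{\boundaryOf_{2\kappa}^+ A_t} = c\restrictedTo_{\boundaryOf_{2\kappa}^+ A_t}$ holds because, on the annulus $\boundaryOf_{2\kappa}^+ A_t = t \isSemiActedUponBy \boundaryOf_{2\kappa}^+ A$, restricting $p \occursIn_t c$ and using $p\restrictedTo_{\boundaryOf_{2\kappa}^+ A} = p'\restrictedTo_{\boundaryOf_{2\kappa}^+ A}$ shows both sides equal $t \actsByItsCoordinateOn (p'\restrictedTo_{\boundaryOf_{2\kappa}^+ A})$. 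The lemma then places $c\restrictedTo_{M \smallsetminus \bigcup_{t} A_t^{+2\kappa}} \times \coprod_{t} x_t\restrictedTo_{A_t^{+2\kappa}}$ in $X$, and by the choice of $x_t$ this is literally $c'$. (The same annulus computation shows $\differenceOf(c, c') \subseteq \bigcup_{t \in T}(t \isSemiActedUponBy A)$, which I use next.)

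For $\Delta(c) = \Delta(c')$ I would argue cell by cell, using $\kappa$-locality and the $\actsOnMap_H$-equivariance of $\Delta$ (\cref{rem:local-map-iff-cellular-automaton}). If $m \notin \bigcup_{t}(t \isSemiActedUponBy A^{+\kappa}) = (\bigcup_{t} t \isSemiActedUponBy A)^{+\kappa}$, then $\ball(m, \kappa)$ misses $\differenceOf(c, c')$, so $c$ and $c'$ agree on $\ball(m, \kappa)$ and hence $\Delta(c)(m) = \Delta(c')(m)$, as $\Delta(x)(m)$ depends only on $x\restrictedTo_{\ball(m, \kappa)}$. Otherwise $m$ lies in a unique $t \isSemiActedUponBy A^{+\kappa} \subseteq (t \isSemiActedUponBy A^{+2\kappa})^{-\kappa}$, whence $\ball(m, \kappa) \subseteq t \isSemiActedUponBy A^{+2\kappa}$, on which $c = t \actsByItsCoordinateOn p$ and $c' = t \actsByItsCoordinateOn p'$. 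Writing $m = g_{m_0, t} \actsOnPoint m'$ with $m' \in A^{+\kappa} \subseteq (A^{+2\kappa})^{-\kappa}$ and applying equivariance to a witness $y$ of $p$, I expect $\Delta(c)(m) = \Delta_{A^{+2\kappa}}^-(p)(m')$ and likewise $\Delta(c')(m) = \Delta_{A^{+2\kappa}}^-(p')(m')$; these agree since $\Delta_{A^{+2\kappa}}^-(p) = \Delta_{A^{+2\kappa}}^-(p')$.

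The hard part will be this last step: matching the local value $\Delta(c)(m)$, read off from the translated pattern $t \actsByItsCoordinateOn p$ embedded in $c$, with the value $\Delta_{A^{+2\kappa}}^-(p)(m')$ of the abstract restricted map at the untranslated cell $m'$. This needs $\kappa$-locality (so only $\ball(m, \kappa) \subseteq t \isSemiActedUponBy A^{+2\kappa}$ is seen), equivariance $\Delta(g_{m_0, t} \actsOnMap y) = g_{m_0, t} \actsOnMap \Delta(y)$, and the interior inclusion $A^{+\kappa} \subseteq (A^{+2\kappa})^{-\kappa}$ placing $m'$ in the domain of $\Delta_{A^{+2\kappa}}^-$. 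The region bookkeeping through the closure and interior identities for $\isSemiActedUponBy$ is where care is most needed.
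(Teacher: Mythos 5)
Your proposal is correct and follows essentially the same route as the paper: you realise $c'$ as a gluing of $c$ with translated extensions of $p'$ and invoke \cref{lem:overlapping-global-configurations-can-be-glued} (checking the boundary agreement via $p \occursIn_t c$ and $p\restrictedTo_{\boundaryOf_{2\kappa}^+ A} = p'\restrictedTo_{\boundaryOf_{2\kappa}^+ A}$), and then compare $\Delta(c)$ and $\Delta(c')$ cell by cell, splitting on whether $m$ lies in some $t \isSemiActedUponBy A^{+\kappa}$, exactly as in the paper's two cases. The only cosmetic difference is that you phrase the Case-1 computation through $\actsOnMap_H$-equivariance applied to a witness point rather than through the identity $\Delta_{t \isSemiActedUponBy A^{+2\kappa}}^-(t \actsByItsCoordinateOn p) = t \actsByItsCoordinateOn \Delta_{A^{+2\kappa}}^-(p)$, which amounts to the same thing.
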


  \begin{usage-note}
    In the proof, $\kappa$-stepness of $X$ is used to apply \cref{lem:overlapping-global-configurations-can-be-glued} to deduce that $c'$ is a point of $X$; and locality of $\Delta$ is used to deduce that $\Delta(c) = \Delta(c')$.
  \end{usage-note}

  \begin{proof}
    There are $x$ and $x'$ in $X$ such that $x\restrictedTo_{A^{+2\kappa}} = p$ and $x'\restrictedTo_{A^{+2\kappa}} = p'$. Thus, for each $t \in T$, we have $(t \actsByItsCoordinateOn x')\restrictedTo_{t \isSemiActedUponBy A^{+2\kappa}} = t \actsByItsCoordinateOn p'$. Hence,
    \begin{align*}
      c' = c\restrictedTo_{M \smallsetminus (\bigcup_{t \in T} t \isSemiActedUponBy A^{+2\kappa})} \times \coprod_{t \in T} (t \actsByItsCoordinateOn x')\restrictedTo_{t \isSemiActedUponBy A^{+2\kappa}}
    \end{align*}
    Moreover, for each $t \in T$, according to \cref{rem:shift-invariance-induces-invariance-under-right-semi-action}, we have $t \actsByItsCoordinateOn x' \in X$. And, by precondition, $\family{(t \isSemiActedUponBy A)^{+2\kappa}}_{t \in T}$ is pairwise disjoint (where we used that $t \isSemiActedUponBy A^{+2\kappa} = (t \isSemiActedUponBy A)^{+2\kappa}$, which holds according to \cite[Item~9 of Lemma~1]{wacker:garden:2016}). And, for each $t \in T$, we have $(t \actsByItsCoordinateOn x')\restrictedTo_{\boundaryOf_{2\kappa}^+(t \isSemiActedUponBy A)} = (t \actsByItsCoordinateOn p')\restrictedTo_{\boundaryOf_{2\kappa}^+(t \isSemiActedUponBy A)} = (t \actsByItsCoordinateOn p)\restrictedTo_{\boundaryOf_{2\kappa}^+(t \isSemiActedUponBy A)} = c\restrictedTo_{\boundaryOf_{2\kappa}^+(t \isSemiActedUponBy A)}$ (where we used that $\boundaryOf_{2\kappa}^+(t \isSemiActedUponBy A) = t \isSemiActedUponBy \boundaryOf_{2\kappa}^+ A$, which holds according to \cite[Item~9 of Lemma~1]{wacker:garden:2016}). Therefore, because $X$ is $\kappa$-step, according to \cref{lem:overlapping-global-configurations-can-be-glued}, we have $c' \in X$.

    Let $m \in M$.
    \begin{description}
      \item[Case 1] $\Exists t \in T \SuchThat m \in t \isSemiActedUponBy A^{+\kappa}$. Then, according to \cite[Item~2 of Lemma~6.2]{wacker:growth:2017} and \cite[Item~3 of Lemma~6.4]{wacker:growth:2017}
                     \begin{align*}
                       m \isSemiActedUponBy \ball(\kappa)
                       &\subseteq (t \isSemiActedUponBy A^{+\kappa}) \isSemiActedUponBy \ball(\kappa)\\
                       &=         (t \isSemiActedUponBy A^{+\kappa})^{+\kappa}\\
                       &=          t \isSemiActedUponBy A^{+2\kappa}.
                     \end{align*}
                     Hence, because $\Delta$ is $\kappa$-local, % TODO Dis: Add: according to \cref{rem:local-map-iff-cellular-automaton} and \cref{lem:equivariance-of-restriction},
                     \begin{align*}
                       \Delta(c')(m)
                       &= \Delta_{t \isSemiActedUponBy A^{+2\kappa}}^-(t \actsByItsCoordinateOn p')\\
                       &= t \actsByItsCoordinateOn \Delta_{A^{+2\kappa}}^-(p')\\
                       &= t \actsByItsCoordinateOn \Delta_{A^{+2\kappa}}^-(p)\\
                       &= \Delta_{t \isSemiActedUponBy A^{+2\kappa}}^-(t \actsByItsCoordinateOn p)\\
                       &= \Delta(c)(m).
                     \end{align*}
      \item[Case 2] $\ForEach t \in T \Holds m \notin t \isSemiActedUponBy A^{+\kappa}$. Then, $m \in M \smallsetminus \bigcup_{t \in T} t \isSemiActedUponBy A^{+\kappa}$. Hence, according to \cite[Item~2 of Lemma~6.2]{wacker:growth:2017}, \cite[Item~3 of Lemma~1]{wacker:garden:2016},
                      % TODO Dis: \cref{it:properties-of-interior-closure-and-boundary:union} of \cref{lem:properties-of-interior-closure-and-boundary},
                      and \cite[Item~5 of Lemma~6.4]{wacker:growth:2017},
                     \begin{align*}
                       m \isSemiActedUponBy \ball(\kappa)
                       &\subseteq \parens[\big]{M \smallsetminus \bigcup_{t \in T} t \isSemiActedUponBy A^{+\kappa}} \isSemiActedUponBy \ball(\kappa)\\
                       &=         \parens[\big]{M \smallsetminus \bigcup_{t \in T} t \isSemiActedUponBy A^{+\kappa}}^{+\kappa}\\
                       &=          M \smallsetminus \parens[\big]{\bigcup_{t \in T} t \isSemiActedUponBy A^{+\kappa}}^{-\kappa}\\
                       &\subseteq  M \smallsetminus \bigcup_{t \in T} (t \isSemiActedUponBy A^{+\kappa})^{-\kappa}\\
                       &\subseteq  M \smallsetminus \bigcup_{t \in T} t \isSemiActedUponBy A^{+ 0}\\
                       &=          M \smallsetminus \bigcup_{t \in T} t \isSemiActedUponBy A.
                     \end{align*}
                     Therefore, because $\Delta$ is $\kappa$-local and $c'\restrictedTo_{M \smallsetminus \bigcup_{t \in T} t \isSemiActedUponBy A} = c\restrictedTo_{M \smallsetminus \bigcup_{t \in T} t \isSemiActedUponBy A}$, we have $\Delta(c')(m) = \Delta(c)(m)$.
    \end{description}
    In either case, $\Delta(c')(m) = \Delta(c)(m)$. Therefore, $\Delta(c') = \Delta(c)$.
  \end{proof}

  Because a local map that is not pre-injective maps at least two finite patterns to the same pattern, the entropy of its image is less than the entropy of its domain, which is shown in

  \begin{theorem} % Paper 4.5 % TODO confer thm:not-pre-injective-implies-less-entropy
  \label{thm:subshift-not-pre-injective-implies-less-entropy}
    Let $M$ be infinite, let $X$ be a strongly irreducible subshift of $Q^M$ of finite type, let $Y$ be a subshift of $Q^M$, and let $\Delta$ be a local map from $X$ to $Y$ that is not pre-injective. Then, $\entropyOf_{\mathcal{F}}(\Delta(X)) < \entropyOf_{\mathcal{F}}(X)$.
  \end{theorem}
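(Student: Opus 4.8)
The plan is to show that one pair of distinct, finitely-differing configurations with the same image forces exponentially many collisions of the restricted map $\Delta_{F_i}^-$, and that this multiplicative deficit survives passage to the entropy limit. Throughout I use that $\mathcal{F} = \net{F_i}_{i \in I}$ is a right Følner net (so that $\mathcal{R}$ is right amenable), as is implicit in the occurrence of $\entropyOf_{\mathcal{F}}$. First I would fix a common radius: since $X$ is strongly irreducible and of finite type and $\Delta$ is local, by \cref{rem:k-strongly-irreducible-for-greater-k}, by \cref{lem:of-finite-type-implies-step} together with \cref{rem:k-step-for-greater-k}, and by \cref{rem:k-local-for-greater-k}, there is a single $\kappa \in \N_0$ for which $X$ is $\kappa$-strongly irreducible and $\kappa$-step and $\Delta$ is $\kappa$-local.

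Because $\Delta$ is not pre-injective there are $x \neq x'$ in $X$ with $\differenceOf(x, x')$ finite and $\Delta(x) = \Delta(x')$; as $\Delta$ is $\actsOnMap_H$-equivariant and $X$ is shift-invariant, I may translate by a suitable $h \in H$ so that $\differenceOf(x, x') \subseteq \ball(\theta)$ for some $\theta \in \N_0$. Setting $A = \ball(\theta)$, $p = x\restrictedTo_{A^{+2\kappa}}$ and $p' = x'\restrictedTo_{A^{+2\kappa}}$, the pair $(p, p')$ satisfies the hypotheses of \cref{lem:exchanging-pattern-by-other-pattern-with-same-image-yields-configuration-with-same-image-and-we-stay-in-subshift}: they agree on $\boundaryOf_{2\kappa}^+ A$ (as $x, x'$ agree off $A$), they have equal image under $\Delta_{A^{+2\kappa}}^-$ (as $\Delta(x) = \Delta(x')$), and $p \neq p'$ (as $\emptyset \neq \differenceOf(x,x') \subseteq A$). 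Since $M$ is infinite, \cref{thm:there-is-a-rho-kappa-theta-tiling} applied with parameters $(\theta + 2\kappa, \kappa)$ yields a $\ntuple{\theta + 2\kappa, \kappa, \theta'}$-tiling $T$ with $\theta' = 4(\theta + 2\kappa) + 2\kappa$, so that $\family{\ball(t, \theta + 2\kappa)}_{t \in T}$ is pairwise at least $\kappa + 1$ apart (hence pairwise disjoint, and $t \isSemiActedUponBy A^{+2\kappa} = \ball(t, \theta + 2\kappa)$) and $\family{\ball(t, \theta')}_{t \in T}$ covers $M$.

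For each $t \in T$ set $p_t = t \actsByItsCoordinateOn p \in X_{\ball(t, \theta + 2\kappa)}$ and, for each index $i$, put $T_i = T \cap F_i^{-(\theta + 3\kappa)}$ with restrictions $\pi_{t} \from X_{F_i} \to X_{\ball(t, \theta + 2\kappa)}$. The crux is the reduction
\begin{equation*}
  \absoluteValueOf{\Delta_{F_i}^-(X_{F_i})} \leq \absoluteValueOf[\big]{X_{F_i} \smallsetminus \bigcup_{t \in T_i} \pi_{t}^{-1}(p_t)}.
\end{equation*}
To prove it I would take any $w = \Delta(c)\restrictedTo_{F_i^{-\kappa}}$ in the image, with $c \in X$, and apply the exchange lemma to $c$ with the disjoint family indexed by $T' = \setOf{t \in T_i \suchThat p \occursIn_t c}$: this replaces every occurrence of $p$ at those tiling cells by $p'$, producing $c' \in X$ with $\Delta(c') = \Delta(c)$ in which, since $p \neq p'$, the pattern $p$ occurs at no cell of $T'$, and by disjointness of the tiled regions at no cell of $T_i \smallsetminus T'$ either. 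Hence $u' = c'\restrictedTo_{F_i}$ lies in the right-hand set (note $\ball(t,\theta+2\kappa) \subseteq F_i$ for $t \in T_i$) and satisfies $\Delta_{F_i}^-(u') = w$, so $\Delta_{F_i}^-$ restricted to the $p$-avoiding patterns is still onto the image. This reduction step is the main obstacle: it hinges on the disjointness of the regions $\family{\ball(t,\theta+2\kappa)}_{t \in T}$ and on the fact that occurrence of $p$ at a cell depends only on the configuration over that cell's region.

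Finally I would combine the counting bounds. Since $\absoluteValueOf{X} \geq 2$, \cref{lem:subshift-with-at-least-two-points-has-at-least-two-patterns-for-each-domain} gives $\xi = \absoluteValueOf{X_{\ball(\theta + 2\kappa)^{+\kappa}}} \geq 2$, and \cref{lem:technical-inequality-for-theorem-entropy-less-if-real-subsets-for-rho-kappa-theta-tiling} (with $\theta + 2\kappa$ in the role of its $\theta$, so $S = T_i$) bounds the right-hand side above by $(1 - \xi^{-1})^{\absoluteValueOf{T_i}} \absoluteValueOf{X_{F_i}}$. Because $(\Delta(X))_{F_i^{-\kappa}} = \Delta_{F_i}^-(X_{F_i})$, taking logarithms, dividing by $\absoluteValueOf{F_i}$, and absorbing the loss of the $\kappa$-collar via $\absoluteValueOf{(\Delta(X))_{F_i}} \leq \absoluteValueOf{(\Delta(X))_{F_i^{-\kappa}}} \cdot \absoluteValueOf{Q}^{\absoluteValueOf{\boundaryOf_\kappa^- F_i}}$ (whose exponent is $o(\absoluteValueOf{F_i})$ by the Følner property) gives, for large $i$,
\begin{equation*}
  \frac{\log \absoluteValueOf{(\Delta(X))_{F_i}}}{\absoluteValueOf{F_i}} \leq \frac{\absoluteValueOf{\boundaryOf_\kappa^- F_i}}{\absoluteValueOf{F_i}} \log\absoluteValueOf{Q} + \frac{\absoluteValueOf{T_i}}{\absoluteValueOf{F_i}} \log(1 - \xi^{-1}) + \frac{\log\absoluteValueOf{X_{F_i}}}{\absoluteValueOf{F_i}}.
\end{equation*}
As $\family{\ball(t, \theta')}_{t \in T}$ covers $M$, \cref{cor:subshifts-upper-bound-of-tiling-cap-folner-net} provides $\varepsilon \in \R_{>0}$ with $\absoluteValueOf{T_i} \geq \varepsilon \absoluteValueOf{F_i}$ eventually; since $\log(1 - \xi^{-1}) < 0$, passing to the limit yields $\entropyOf_{\mathcal{F}}(\Delta(X)) \leq \varepsilon \log(1 - \xi^{-1}) + \entropyOf_{\mathcal{F}}(X) < \entropyOf_{\mathcal{F}}(X)$, as claimed.
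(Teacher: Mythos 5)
Your proof is correct, and it uses the same essential ingredients as the paper — the pair $(p, p')$ extracted from a witness to non-pre-injectivity, the $\ntuple{\theta + 2\kappa, \kappa, \theta'}$-tiling from \cref{thm:there-is-a-rho-kappa-theta-tiling}, the exchange lemma \cref{lem:exchanging-pattern-by-other-pattern-with-same-image-yields-configuration-with-same-image-and-we-stay-in-subshift}, and the counting estimate of \cref{lem:technical-inequality-for-theorem-entropy-less-if-real-subsets-for-rho-kappa-theta-tiling} — but the bookkeeping is organised differently. The paper introduces the auxiliary set $Z$ of all points of $X$ in which $p$ occurs at no tile, proves $\entropyOf_{\mathcal{F}}(Z) < \entropyOf_{\mathcal{F}}(X)$ by invoking \cref{thm:entropy-less-if-real-subsets-for-rho-kappa-theta-tiling} wholesale, deduces $\entropyOf_{\mathcal{F}}(\Delta(Z)) < \entropyOf_{\mathcal{F}}(X)$ from the external fact that local maps do not increase entropy, and finishes by showing $\Delta(X) = \Delta(Z)$ via the exchange lemma. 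You instead inline the whole argument at the level of a single Følner set: your surjection of the $p$-avoiding patterns of $X_{F_i}$ onto $\Delta_{F_i}^-(X_{F_i})$ is the finite-pattern shadow of the paper's identity $\Delta(X) = \Delta(Z)$, and it lets you bypass both \cref{thm:entropy-less-if-real-subsets-for-rho-kappa-theta-tiling} and the entropy-monotonicity theorem, at the modest price of the explicit collar correction $\absoluteValueOf{(\Delta(X))_{F_i}} \leq \absoluteValueOf{(\Delta(X))_{F_i^{-\kappa}}} \cdot \absoluteValueOf{Q}^{\absoluteValueOf{\boundaryOf_\kappa^- F_i}}$, whose exponent is indeed $o(\absoluteValueOf{F_i})$ by the Følner property. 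Your version is more self-contained; the paper's is shorter because it reuses machinery already established for \cref{thm:subshift-not-surjective-implies-less-entropy}. (Two minor remarks: the translation of $\differenceOf(x, x')$ into a ball about $m_0$ is unnecessary, since finite right generation already places every finite set inside some $\ball(\rho)$; and your reduction implicitly, and correctly, uses that $c'$ agrees with $c$ on the tiles of $T_i$ where $p$ did not occur, so that $p$-avoidance holds on all of $T_i$ and not merely on $T'$.)
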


  \begin{usage-note}
    In the proof, infiniteness of $M$ is used to apply \cref{thm:there-is-a-rho-kappa-theta-tiling} yielding a tiling, strong irreducibility of $X$ is used to apply \cref{thm:entropy-less-if-real-subsets-for-rho-kappa-theta-tiling} yielding a strict inequality for entropies, finite typeness of $X$ and locality of $\Delta$ is used to apply \cref{lem:exchanging-pattern-by-other-pattern-with-same-image-yields-configuration-with-same-image-and-we-stay-in-subshift} yielding that the image of all points of $X$ in which a certain pattern does not occur at points of a tiling is the same as the image of $X$.
  \end{usage-note}

  \begin{proof-sketch}
    Because $\Delta$ is not pre-injective, there are two distinct points of $X$ with the same image that differ only in finitely many cells. Thus, there are two distinct finite patterns, say $p$ and $p'$, with the same image. Hence, the image of $X$ is equal to the image of the set $Z$ of all points of $X$ in which the pattern $p$ does not occur. Because there are too many points not in $Z$, this set does have less entropy than $X$.
  \end{proof-sketch}

  \begin{proof}
    According to \cref{rem:k-strongly-irreducible-for-greater-k}, \cref{lem:of-finite-type-implies-step} and \cref{rem:k-step-for-greater-k}, and \cref{rem:k-local-for-greater-k}, there is a $\kappa \in \N_0$ such that $X$ is $\kappa$-strongly irreducible, $X$ is $\kappa$-step, and $\Delta$ is $\kappa$-local.

    Because $\Delta$ is not pre-injective, there are $c$ and $c'$ in $X$ such that $\differenceOf(c, c')$ is finite, $\Delta(c) = \Delta(c')$, and $c \neq c'$; in particular, $\absoluteValueOf{X} \geq 2$. Hence, there is a $\rho \in \N_0$ such that $\differenceOf(c, c') \subseteq \ball(\rho)$. Let $p = c\restrictedTo_{\ball(\rho)^{+2\kappa}}$ and let $p' = c'\restrictedTo_{\ball(\rho)^{+2\kappa}}$. Then, $p \neq p'$; $m_0 \in \ball(\rho)$; $p$, $p' \in X_{\ball(\rho)^{+2\kappa}}$; $p\restrictedTo_{\boundaryOf_{2\kappa}^+ \ball(\rho)} = p'\restrictedTo_{\boundaryOf_{2\kappa}^+ \ball(\rho)}$; and, because $\Delta(c) = \Delta(c')$, we have $\Delta_{\ball(\rho)^{+2\kappa}}^-(p) = \Delta_{\ball(\rho)^{+2\kappa}}^-(p')$.

    Because $M$ is infinite, according to \cref{thm:there-is-a-rho-kappa-theta-tiling}, there is a $\ntuple{\rho + 2 \kappa, \kappa, \theta'}$-tiling $T$ of $\mathcal{R}$. Let
    \begin{equation*}
      Z = \setOf{x \in X \suchThat \ForEach t \in T \Holds p \not\occursIn_t x}.
    \end{equation*}
    For each $t \in T$, according to \cref{rem:pattern-belongs-to-shift-if-and-only-if-translated-pattern-belongs}, we have $t \actsByItsCoordinateOn p \in X_{t \isSemiActedUponBy \ball(\rho)^{+2\kappa}} \smallsetminus Z_{t \isSemiActedUponBy \ball(\rho)^{+2\kappa}}$ and hence $Z_{t \isSemiActedUponBy \ball(\rho)^{+2\kappa}} \subsetneqq X_{t \isSemiActedUponBy \ball(\rho)^{+2\kappa}}$. Moreover, for each $t \in T$, according to \cite[Item~2 of Corollary~6.3]{wacker:growth:2017} and \cite[Corollary~5.14]{wacker:growth:2017}, we have $t \isSemiActedUponBy \ball(\rho)^{+2\kappa} = \ball(t, \rho + 2 \kappa)$.
    Therefore, because $X$ is $\kappa$-strongly irreducible and $\absoluteValueOf{X} \geq 2$, according to \cref{thm:entropy-less-if-real-subsets-for-rho-kappa-theta-tiling}, we have $\entropyOf_{\mathcal{F}}(Z) < \entropyOf_{\mathcal{F}}(X)$. Hence, according to \cite[Theorem~3]{wacker:garden:2016}, we have $\entropyOf_{\mathcal{F}}(\Delta(Z)) < \entropyOf_{\mathcal{F}}(X)$.

    Let $x \in X$. Put $U = \setOf{t \in T \suchThat p \occursIn_t x}$. Because $X$ is $\kappa$-step and $\Delta$ is $\kappa$-local, according to \cref{lem:exchanging-pattern-by-other-pattern-with-same-image-yields-configuration-with-same-image-and-we-stay-in-subshift}, there is an $x' \in X$ such that $x' \in Z$ and $\Delta(x) = \Delta(x')$. Therefore, $\Delta(X) = \Delta(Z)$. In conclusion, $\entropyOf_{\mathcal{F}}(\Delta(X)) < \entropyOf_{\mathcal{F}}(X)$. % U ist ungeschickt, da Blöcke mit u bezeichnet werden.
  \end{proof}

  Because a right Følner net in a finite cell space is eventually equal to the set of cells, the entropy of a subset of the full shift is a function of the cardinality of that set, which is shown in

  \begin{lemma} % TODO This lemma belongs into Garden of Eden paper in the chapter Entropies!
  \label{lem:if-M-is-finite-abs-X-determined-by-entropy}
    Let $\mathcal{R}$ be right amenable, let $\mathcal{F} = \net{F_i}_{i \in I}$ be a right Følner net in $\mathcal{R}$, let $M$ be finite, and let $X$ be a subset of $Q^M$. Then,
    \begin{equation*}
      \absoluteValueOf{X} = \exp\parens[\big]{\absoluteValueOf{M} \cdot \entropyOf_{\mathcal{F}}(X)}.
    \end{equation*}
  \end{lemma}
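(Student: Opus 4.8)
The plan is to show that the right Følner net $\mathcal{F}$ is eventually equal to the finite set $M$, after which the claim is immediate from the definition of entropy.

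First I would use that $M$ is finite to set $\rho = \max_{m, m' \in M} d(m, m') \in \N_0$, the diameter of the $S$-Cayley graph; since every distance is at most $\rho$, we have $\ball(m, \rho) = M$ for each cell $m \in M$. For a non-empty $F \subseteq M$ this forces $F^{+\rho} = M$, because $\ball(m, \rho) = M$ meets $F$ for every $m$; and it forces $F^{-\rho} = M$ if $F = M$ and $F^{-\rho} = \emptyset$ otherwise, because $\ball(m, \rho) = M \subseteq F$ holds for some (equivalently every) $m$ if and only if $F = M$. Hence $\absoluteValueOf{\boundaryOf_\rho F} = 0$ when $F = M$, and $\absoluteValueOf{\boundaryOf_\rho F} = \absoluteValueOf{M}$ when $F \subsetneqq M$.

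Next I would feed this into the Følner condition for this particular $\rho$, namely $\lim_{i \in I} \absoluteValueOf{\boundaryOf_\rho F_i} / \absoluteValueOf{F_i} = 0$. Taking $\varepsilon = 1$, there is an index $i_0 \in I$ such that $\absoluteValueOf{\boundaryOf_\rho F_i} / \absoluteValueOf{F_i} < 1$ for all $i \geq i_0$. But whenever $F_i \subsetneqq M$ we have, using $\absoluteValueOf{F_i} \leq \absoluteValueOf{M}$, the lower bound $\absoluteValueOf{\boundaryOf_\rho F_i} / \absoluteValueOf{F_i} = \absoluteValueOf{M} / \absoluteValueOf{F_i} \geq 1$, contradicting the choice of $i_0$. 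Therefore $F_i = M$ for every $i \geq i_0$, so the net is eventually equal to $M$; for these indices $X_{F_i} = X_M = X$ and $\absoluteValueOf{F_i} = \absoluteValueOf{M}$.

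Finally I would unwind the definition of the entropy. Since the net $\net{\log\absoluteValueOf{X_{F_i}} / \absoluteValueOf{F_i}}_{i \in I}$ is eventually the constant $\log\absoluteValueOf{X} / \absoluteValueOf{M}$, its limit exists and the entropy $\entropyOf_{\mathcal{F}}(X)$ equals $\log\absoluteValueOf{X} / \absoluteValueOf{M}$. Rearranging gives $\absoluteValueOf{M} \cdot \entropyOf_{\mathcal{F}}(X) = \log\absoluteValueOf{X}$ and hence $\absoluteValueOf{X} = \exp\parens[\big]{\absoluteValueOf{M} \cdot \entropyOf_{\mathcal{F}}(X)}$. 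The only genuine obstacle is the first reduction, i.e.\ forcing the net to stabilise at $M$; the crux there is the diameter observation together with the comparison $\absoluteValueOf{\boundaryOf_\rho F_i} / \absoluteValueOf{F_i} \geq 1$ for proper subsets, which the Følner condition excludes for all large indices.
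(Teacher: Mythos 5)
Your proof is correct, and its overall skeleton matches the paper's: show the Følner net is eventually equal to $M$, then read off the entropy as $\log\absoluteValueOf{X}/\absoluteValueOf{M}$. Where you differ is in how stabilisation at $M$ is forced. You work directly with the $\rho$-boundary form of the Følner condition, taking $\rho$ to be the diameter of the (finite, connected) Cayley graph so that $\boundaryOf_\rho F$ is all of $M$ for every non-empty proper subset $F$ and empty for $F = M$; a single application of the condition for that one $\rho$ then rules out proper subsets for all large indices. The paper instead invokes an equivalent characterisation of right Følner nets via the quantities $\absoluteValueOf{F_i \smallsetminus (\blank \isSemiActedUponBy \mathfrak{g})^{-1}(F_i)}/\absoluteValueOf{F_i}$: transitivity of $\isSemiActedUponBy$ produces, for each proper subset $F$, some $\mathfrak{g}$ making this quantity positive, and finiteness of the collection $\setOf{F_i \suchThat i \in I}$ then forces eventual equality with $M$. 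Your route has the advantage of using exactly the defining property of right Følner nets stated in the paper's preliminaries (and needs only one boundary radius and no auxiliary finiteness argument about the family of Følner sets), while the paper's route avoids any reference to the generating set or the metric. Both are sound; the concluding entropy computation is identical.
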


  \begin{proof} % TODO define "eventually equal to" ... confer https://en.wikipedia.org/wiki/Net_(mathematics)#Limits_of_nets
    Let $F$ be a non-empty and finite subset of $M$ such that $F \neq M$. Then, because $\isSemiActedUponBy$ is transitive, there is a $\mathfrak{g} \in G \modulo G_0$ such that $(F \isSemiActedUponBy \mathfrak{g}) \cap (M \smallsetminus F) \neq \emptyset$. Hence, $F \isSemiActedUponBy \mathfrak{g} \nsubseteq F$, thus $F \nsubseteq (\blank \isSemiActedUponBy \mathfrak{g})^{-1}(F)$, thus $F \smallsetminus (\blank \isSemiActedUponBy \mathfrak{g})^{-1}(F) \neq \emptyset$, and therefore $\absoluteValueOf{F \smallsetminus (\blank \isSemiActedUponBy \mathfrak{g})^{-1}(F)} \neq 0$. On the other hand, $\absoluteValueOf{M \smallsetminus (\blank \isSemiActedUponBy \mathfrak{g})^{-1}(M)} = 0$. Moreover, because $M$ is finite, the set $\setOf{F \subseteq M \suchThat F \neq \emptyset, F \text{ finite}}$ is finite and hence its subset $\setOf{F_i \suchThat i \in I}$ is finite too. Furthermore, because $\mathcal{F}$ is a right Følner net,
    \begin{equation*}
      \ForEach \mathfrak{g} \in G \modulo G_0 \Holds \lim_{i \in I} \frac{\absoluteValueOf{F_i \smallsetminus (\blank \isSemiActedUponBy \mathfrak{g})^{-1}(F_i)}}{\absoluteValueOf{F_i}} = 0.
    \end{equation*}
    Altogether, $\mathcal{F}$ is eventually equal to $M$. Therefore,
    \begin{equation*}
      \entropyOf_{\mathcal{F}}(X)
      = \frac{\log\absoluteValueOf{\pi_M(X)}}{\absoluteValueOf{M}}
      = \frac{\log\absoluteValueOf{X}}{\absoluteValueOf{M}}.
    \end{equation*}
    In conclusion, $\absoluteValueOf{X} = \exp(\absoluteValueOf{M} \cdot \entropyOf_{\mathcal{F}}(X))$.
  \end{proof}

  Because surjectivity as well as pre-injectivity of a local map is characterised by maximal entropy of its image with respect to its codomain or domain, if both domains have the same entropy, then a local map is surjective if and only if it is pre-injective, which is shown in

  \begin{main-theorem}[Garden of Eden theorem] % TODO ; Edward Forrest Moore, 1962; John R. Myhill, 1963 % Paper 4.7
  \label{thm:subshift-garden-of-eden}
    Let $\mathcal{R}$ be right amenable, let $\mathcal{F}$ be a right Følner net in $\mathcal{R}$, let $X$ be a non-empty strongly irreducible subshift of $Q^M$ of finite type, let $Y$ be a strongly irreducible subshift of $Q^M$ such that $\entropyOf_{\mathcal{F}}(X) = \entropyOf_{\mathcal{F}}(Y)$, and let $\Delta$ be a local map from $X$ to $Y$. The map $\Delta$ is surjective if and only if it is pre-injective.
  \end{main-theorem}

  \begin{usage-note}
    In the proof, non-emptiness of $X$, strong irreducibility of $Y$, and locality of $\Delta$ are used to apply \cref{thm:subshift-not-surjective-implies-less-entropy} yielding a characterisation of surjectivity; and strong irreducibility and finite typeness of $X$, and locality of $\Delta$ are used to apply \cref{thm:subshift-less-entropy-implies-not-pre-injective,thm:subshift-not-pre-injective-implies-less-entropy} yielding a characterisation of pre-injectivity.
  \end{usage-note}

  \begin{proof}
    First, let $M$ be finite. Then, $Q^M$ is finite, and thus $X$ and $Y$ are finite. Hence, because $\entropyOf_{\mathcal{F}}(X) = \entropyOf_{\mathcal{F}}(Y)$, according to \cref{lem:if-M-is-finite-abs-X-determined-by-entropy}, we have $\absoluteValueOf{X} = \absoluteValueOf{Y}$. Therefore, $\Delta$ is surjective if and only if it is injective. Moreover, because $M$ is finite, the map $\Delta$ is pre-injective if and only if it is injective. In conclusion, $\Delta$ is surjective if and only if it is pre-injective.

    Secondly, let $M$ be infinite. According to \cref{thm:subshift-not-surjective-implies-less-entropy}, the map $\Delta$ is not surjective if and only if $\entropyOf_{\mathcal{F}}(\Delta(X)) < \entropyOf_{\mathcal{F}}(Y)$. And, according to \cref{thm:subshift-less-entropy-implies-not-pre-injective} and \cref{thm:subshift-not-pre-injective-implies-less-entropy}, because $\entropyOf_{\mathcal{F}}(X) = \entropyOf_{\mathcal{F}}(Y)$, we have $\entropyOf_{\mathcal{F}}(\Delta(X)) < \entropyOf_{\mathcal{F}}(Y)$ if and only if $\Delta$ is not pre-injective. Hence, $\Delta$ is not surjective if and only if it is not pre-injective. In conclusion, $\Delta$ is surjective if and only if it is pre-injective.
  \end{proof}

  \begin{corollary} % Paper 4.8
  \label{cor:Moore-and-Myhill} % TODO Conjecture that the Myhill property holds on more general than right generated left homogeneous spaces (see "The Myhill Property For Cellular Automata on Amenable Semigroups").
    Let $\mathcal{R}$ be right amenable. Each strongly irreducible subshift of $Q^M$ of finite type has the Moore and the Myhill property. \qed
  \end{corollary}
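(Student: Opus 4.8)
The plan is to deduce both properties at once from the Garden of Eden theorem (Main Theorem~\ref{thm:subshift-garden-of-eden}) by specialising the codomain to be the domain itself. First I would secure a right Følner net: because $\mathcal{R}$ is right amenable and, by the standing assumptions of the Context, its stabiliser $G_0$ is finite, there is a right Følner net $\mathcal{F}$ in $\mathcal{R}$, so the entropy $\entropyOf_{\mathcal{F}}$ and the hypotheses of the Main Theorem are available.

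Next, let $X$ be a strongly irreducible subshift of $Q^M$ of finite type and let $\Delta$ be a local map from $X$ to $X$. The key observation is that we may take $Y = X$ in Main Theorem~\ref{thm:subshift-garden-of-eden}, provided $X$ is non-empty: then $X$ is a non-empty strongly irreducible subshift of finite type, $Y = X$ is strongly irreducible, and trivially $\entropyOf_{\mathcal{F}}(X) = \entropyOf_{\mathcal{F}}(Y)$. Hence the Main Theorem yields that $\Delta$ is surjective if and only if it is pre-injective. The forward implication is exactly the Moore property and the backward implication is exactly the Myhill property (\cref{def:Moore-and-Myhill-properties}), so both are established simultaneously.

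It remains to treat the degenerate case, which is the only point requiring separate care since the Main Theorem presupposes $X \neq \emptyset$. If $X$ is empty (recall from \cref{ex:shift:empty} that $\emptyset$ is a subshift), then the unique local self-map of $X$ is the empty map, which is vacuously both surjective and pre-injective; hence both implications hold trivially and $X$ has the Moore and the Myhill property.

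There is essentially no genuine obstacle: the corollary is a direct specialisation of the Main Theorem with $Y = X$, so the work has already been done in establishing \cref{thm:subshift-not-surjective-implies-less-entropy,thm:subshift-less-entropy-implies-not-pre-injective,thm:subshift-not-pre-injective-implies-less-entropy} and assembling them into the Garden of Eden theorem. The main thing to keep in mind is merely bookkeeping — checking that all hypotheses of the Main Theorem survive the specialisation $Y = X$ and isolating the empty shift as a trivial case.
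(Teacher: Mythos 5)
Your proposal is correct and is exactly the argument the paper intends: the corollary is stated with a \qed because it is the immediate specialisation $Y = X$ of Main Theorem~\ref{thm:subshift-garden-of-eden}, using that right amenability together with the standing assumption of finite stabilisers yields a right F\o lner net. Your explicit handling of the empty shift (where the unique self-map is vacuously surjective and pre-injective) is a sensible piece of bookkeeping that the paper leaves implicit.
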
 % TODO The case that that subshift is empty does not follow from the main theorem, but must be treated separately.

  \begin{corollary}
  \label{cor:Moore-and-Myhill-for-left-homogeneous-spaces}
    Let $\mathcal{M} = \ntuple{M, G, \actsOnPoint}$ be a right amenable and finitely right generated left homogeneous space with finite stabilisers and let $Q$ be a finite set. For each coordinate system $\mathcal{K}$ for $\mathcal{M}$ and each $\mathcal{K}$-big subgroup $H$ of $G$, each strongly irreducible subshift of $Q^M$ of finite type with respect to $\mathcal{R} = \ntuple{\mathcal{M}, \mathcal{K}}$ and $H$ has the Moore and the Myhill property with respect to $\mathcal{R}$ and $H$. \qed
  \end{corollary}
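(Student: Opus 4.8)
The plan is to reduce the statement to \cref{cor:Moore-and-Myhill} by instantiating the standing Context at the given data. Concretely, for any choice of coordinate system $\mathcal{K}$ for $\mathcal{M}$ and any $\mathcal{K}$-big subgroup $H$ of $G$, I would show that the cell space $\mathcal{R} = \ntuple{\mathcal{M}, \mathcal{K}}$, together with a suitable finite symmetric right generating set $S$, the subgroup $H$, and the finite state set $Q$, satisfies all the hypotheses fixed in the Context and is moreover right amenable. Every result established under the Context — in particular \cref{cor:Moore-and-Myhill} — then applies verbatim to these data, and since $\mathcal{K}$ and $H$ are arbitrary the corollary follows.

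First I would fix $\mathcal{K}$ and $H$ and set $\mathcal{R} = \ntuple{\mathcal{M}, \mathcal{K}}$. Because $\mathcal{M}$ has finite stabilisers, the stabiliser $G_0$ of $m_0$ under $\actsOnPoint$ is finite. Because $\mathcal{M}$ is finitely right generated, the equivalence recorded in the preliminary notions (the left homogeneous space is finitely right generated if and only if each of its cell spaces is) shows that $\mathcal{R}$ is finitely right generated; I would pick a finite right generating set $S_0$ with $G_0 \cdot S_0 \subseteq S_0$. To meet the Context's requirement of a \emph{symmetric} generating set, put $S = S_0 \cup S_0^{-1}$. This set is finite and symmetric, it is again a right generating set since $S_0 \subseteq S$, and it still satisfies $G_0 \cdot S \subseteq S$, which follows from $G_0 \cdot S_0 \subseteq S_0$ together with the observation that $S_0^{-1}$ is stable under left multiplication by $G_0$ (for $h_0 \in G_0$ and $g^{-1} G_0 \in S_0^{-1}$ one has $h_0 g^{-1} G_0 = (g h_0^{-1})^{-1} G_0 \in S_0^{-1}$, as $g h_0^{-1}$ is another representative of the coset $g G_0 \in S_0$). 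Finally, because $\mathcal{M}$ is right amenable and has finite stabilisers, the corresponding equivalence for right amenability stated in the preliminary notions yields that $\mathcal{R}$ is right amenable.

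With these data the entire Context holds: $\mathcal{R}$ is finitely right generated with finite stabiliser $G_0$, the set $S$ is a finite symmetric right generating set of $\mathcal{R}$, $H$ is a $\mathcal{K}$-big subgroup of $G$, and $Q$ is finite; in addition $\mathcal{R}$ is right amenable. Hence \cref{cor:Moore-and-Myhill} applies and shows that each strongly irreducible subshift of $Q^M$ of finite type has the Moore and the Myhill property, all notions being understood with respect to $\mathcal{R}$ and $H$ exactly as fixed in the Context. As $\mathcal{K}$ and $H$ were chosen arbitrarily, this establishes the claim.

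I expect no genuine mathematical difficulty here, since the statement is a transfer result: all the substantive work has already been carried out under the Context. The only points requiring care are bookkeeping ones — checking that symmetrising the generating set preserves the defect condition $G_0 \cdot S \subseteq S$, and invoking the two equivalences (finite right generation and right amenability pass from $\mathcal{M}$ to $\ntuple{\mathcal{M}, \mathcal{K}}$ for every coordinate system), which are precisely the facts recorded in the preliminary notions and proved in \cite{wacker:amenable:2016,wacker:growth:2017}. The main conceptual step, such as it is, is simply recognising that the present corollary is obtained by applying \cref{cor:Moore-and-Myhill} at an arbitrary coordinate system and $\mathcal{K}$-big subgroup.
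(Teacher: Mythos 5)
Your proposal is correct and follows exactly the route the paper intends: the corollary is stated with a terminal \qed precisely because it reduces to \cref{cor:Moore-and-Myhill} once one observes that finite right generation and right amenability pass from $\mathcal{M}$ to $\ntuple{\mathcal{M}, \mathcal{K}}$ for every coordinate system, and that a finite symmetric right generating set satisfying $G_0 \cdot S \subseteq S$ can always be produced (the paper's follow-up remark instead notes that the relevant properties are independent of the choice of $S$ by Lipschitz equivalence of the metrics, but your explicit symmetrisation argument is an equally valid way to meet the Context's hypotheses). Your bookkeeping, including the check that $S_0^{-1}$ is stable under left multiplication by $G_0$, is sound.
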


  \begin{remark}
    Note that in \cref{cor:Moore-and-Myhill-for-left-homogeneous-spaces} we do not have to choose a finite and symmetric right generating set $S$, because being a subshift, being strongly irreducible, being of finite type, being local, being surjective, being pre-injective, having the Moore property, and having the Myhill property, do not depend on the choice of a finite and symmetric right generating set of $\mathcal{R}$; the reason for the properties that depend on the metric induced by such a right generating set is that those metrics are, according to \cite[Corollary 8.7]{wacker:growth:2017}, pairwise Lipschitz equivalent.
  \end{remark}

  \begin{example}[From Golden Mean to Even Shift]
    In the situation of \cref{ex:from-golden-mean-to-even-shift-local-map}, let $\mathcal{F}$ be the right Følner net $\sequence{\setOf{1, \dotsc, n}}_{n \in \N_+}$. Recall that the golden mean shift $X$ is non-empty (\cref{ex:shift:golden-mean}), strongly irreducible (\cref{ex:strongly-irreducible}), and of finite type (\cref{ex:of-finite-type-or-not}); and that the even shift $Y$ is strongly irreducible (\cref{ex:strongly-irreducible}) but \emph{not} of finite type (\cref{ex:of-finite-type-or-not}). And, according to \cite[Example 4.1.4]{lind:marcus:1995} and \cite[Example 4.1.6]{lind:marcus:1995}, the entropy of $X$ with respect to $\mathcal{F}$ and the one of $Y$ are both the golden mean $(1 + \sqrt{5})/2$. Therefore, according to \cref{thm:subshift-garden-of-eden}, because the local map $\Delta$ from $X$ to $Y$ is surjective (\cref{ex:from-golden-mean-to-even-shift-local-map}), it is also pre-injective. However, it is not injective, because the two points of $X$ with alternating $0$'s and $1$'s, that is, those of the form $\dotso 010101 \dotso$, are both mapped to the point of $Y$ with only $0$'s, that is, the one of the form $\dotso 000 \dotso$. % TODO And this seems to be the only obstruction to injectivity. As soon as there is a 1 in the image, the pre-image is uniquely determined.
  \end{example}


\begin{thebibliography}{9}
    \bibitem{ceccherini-silberstein:machi:scarabotti:1999} Ceccherini-Silberstein, Tullio, Machi, Antonio, and Scarabotti, Fabio. Amenable groups and cellular automata. In: Annales de l'institut Fourier 49.2 (1999), pages 673-685.
    \bibitem{ceccherini-silberstein:coornaert:2010} Ceccherini-Silberstein, Tullio and Coornaert, Michel. Cellular Automata and Groups. In: Springer Monographs in Mathematics. Springer-Verlag, 2010.
    \bibitem{ceccherini-silberstein:coornaert:2012} Ceccherini-Silberstein, Tullio and Coornaert, Michel. On the density of periodic configurations in strongly irreducible subshifts. In: Nonlinearity 25.7 (2012), pages 2119-2131.
    \bibitem{fiorenzi:2003} Fiorenzi, Francesca. Cellular automata and strongly irreducible shifts of finite type. In: Journal of Theoretical Computer Science 299.1 (2003), pages 477-493.
    \bibitem{lind:marcus:1995} Lind, Douglas and Marcus, Brian. An Introduction to Symbolic Dynamics and Coding. Cambridge University Press, 1995.
    \bibitem{moore:1962} Moore, Edward Forrest. Machine models of self-reproduction. In: Proceedings of Symposia in Applied Mathematics 14 (1962), pages 17–33.
    \bibitem{moriceau:2011} Moriceau, S{\'{e}}bastien. Cellular Automata on a $G$-Set. In: Journal of Cellular Automata 6.6 (2011), pages 461-486.
    \bibitem{myhill:1963} Myhill, John R. The converse of Moore's Garden-of-Eden theorem. In: Proceedings of the American Mathematical Society 14 (1963), pages 685–686.
    \bibitem{wacker:automata:2016} Wacker, Simon. Cellular Automata on Group Sets and the Uniform Curtis-Hedlund-Lyndon Theorem. Cellular Automata and Discrete Complex Systems (2016), pages 185-198. arXiv:1603.07271 [math.GR].
    \bibitem{wacker:amenable:2016} Wacker, Simon. Right Amenable Left Group Sets and the Tarski-Følner Theorem. Preprint (2016). arXiv:1603.06460 [math.GR].
    \bibitem{wacker:garden:2016} Wacker, Simon. The Garden of Eden Theorem for Cellular Automata on Group Sets. Cellular Automata (2016), pages 66-78. arXiv:1603.07272 [math.GR].
    \bibitem{wacker:growth:2017} Wacker, Simon. Right Amenability And Growth Of Finitely Right Generated Left Group Sets. Preprint (2017). arXiv:1701.02108 [math.GR].
  \end{thebibliography}
\end{document}